\renewcommand{\@seccntformat}[1]{ {\csname
the#1\endcsname}.\hspace{0.2em}}
\def \P {{\mathbb P}}
\def\E{\mathbb E}
\def \R {{\mathbb R}}
\def \N {{\mathbb N}}
\def \Z {{\mathbb Z}}
\begin{document}
\theoremstyle{plain}
\newtheorem{axiom}{Axiom}
\newtheorem{claim}[axiom]{Claim}
\newtheorem{theorem}{Theorem}[section]
\newtheorem{lemma}[theorem]{Lemma}
\newtheorem{prop}[theorem]{Proposition}
\newtheorem{coro}[theorem]{Corollary}
%%%%%%%%%%%%%%%%%%%%%%%%%%%%%%%%%%%%%%%%%%%%%%
%%                                          %%
%% For Assumption, Definition, Example,     %%
%% Notation, Property, Remark, Fact         %%
%% use \theoremstyle{remark}                %%
%%                                          %%
%%%%%%%%%%%%%%%%%%%%%%%%%%%%%%%%%%%%%%%%%%%%%%
\theoremstyle{remark}
\newtheorem{definition}[theorem]{Definition}
\newtheorem*{example}{Example}
\newtheorem*{fact}{Fact}

\definecolor{gris25}{gray}{0.75}
 
\newcounter{encart}
\renewcommand{\theencart}{\thechapter.\arabic{encart}}
\newenvironment{encart}[1]
{
\refstepcounter{encart}
\small \sf \medskip
\noindent\colorbox{gris25}{
\makebox[\textwidth][c]{{\sf E{\scriptsize{NCART}}  
\theencart} -- \bfseries #1}}\medskip 
\addcontentsline{toc}{subsection}{\sf \small \hspace{0.85 cm} Encart \theencart. #1}
}
{\smallskip
\noindent\colorbox{gris25}{\makebox[\textwidth][c]{\hspace{1cm}}}
\rm \normalsize \medskip}

\newenvironment{theobis}[1]
  {\renewcommand{\thetheo}{\ref{#1}$\ bis $}%
   \addtocounter{theo}{-1}%
   \begin{theo}}
  {\end{theo}}
  \title[A converse to Pitman's theorem  for   $A_1^1$]
 {A converse to Pitman's theorem  for a space-time Brownian motion in  a type $A_1^1$ Weyl chamber}
 \author{Manon Defosseux and Charlie Herent } 
\address{M. D. : Universit\'e Paris Cit\'e, CNRS, MAP5, F-75006 Paris, France}\email{manon.defosseux@parisdescartes.fr}
\address{C. H. : Universit\'e Paris Cit\'e, CNRS, MAP5, F-75006 Paris, France and Universit\'e Paris-Est, CNRS, Institut Gaspard Monge, Champs-Sur-Marne, France}\email{charlie.herent@ens-rennes.fr} 
  \maketitle  
     \begin{abstract} We prove an inverse Pitman's theorem for a space-time  Brownian motion  conditioned in Doob's sense to remain in an affine Weyl chamber. Our theorem provides  a way to recover an unconditioned space-time Brownian motion from a conditioned  one by applying a sequence of path transformations. 
  \end{abstract}
  \setcounter{tocdepth}{1}
  \tableofcontents
  \section{Introduction}
 
 Let $\{b(t),\,t\ge 0\}$ be  a real brownian motion then Pitman's theorem \cite{pitman} asserts that 
$$\mathcal Pb(t)=b_t-2\inf_{0\leq s \leq t}b_s, \, t\ge 0,$$ is a Bessel process of dimension 3, which has the same distribution as a brownian motion conditioned, in Doob's sense, to remain in the positive half-line.
 This seminal result  has given rise to many generalizations or variations, see for instance \cite{bertoin,biane1,BJ,RC,MY,OY,RVY}.
Let us briefly describe one of the most accomplished one, due to Ph. Biane, Ph. Bougerol and N. O'Connell \cite{bbo,bbobis}. In Pitman's theorem, the unconditioned Brownian motion lives on $\R$ and the conditioned one  lives on $\R_+$. Actually $\R_+$ can be seen as the fundamental chamber of  the   group generated by the reflection through $0$ acting on $\R$. This group is the simplest one among the class of   Coxeter groups. In \cite{bbo} the authors have shown how to obtain a brownian motion conditioned to remain in the fundamental chamber of a finite Coxeter group by applying a sequence of Pitman  type transformations associated to a set of generators of the Coxeter group, according to the order of appearance of the generators in a reduced  decomposition of the longest element in the group. This paper has 
brought to light    deep connections between the Pitman transform and the Littelmann path model \cite{littel2} which is a combinatorial model  that describes the representations   of  a Kac--Moody Lie algebra. 

The  affine Coxeter group of type $A_1^1$ is the Weyl group of a rank one affine Kac--Moody algebra.  In   \cite{boubou-defo}    another Pitman type theorem has been established for a conditioned random process living in the fundamental chamber of the latter group, whose interior is the subset $C_{\mbox{aff}}$ defined below.    Pitman's theorem in that case involves two Pitman  type transformations corresponding to the generators of the   group and  is only asymptotic. Since there is no longest element in that case  one has to apply an infinite number of transformations. Moreover,  quite surprisingly,  the conditioned process is not obtained by applying successively and infinitely the two Pitman transforms to an unconditioned process  : a   correction has to be applied, which involves two L\'evy type transformations.   
 
 One can formulate a converse to Pitman's theorem, indeed given for $T\ge 0$ a nonnegative  continuous  real trajectory  $\{\pi(t), t\in[0,T]\}$ starting at $0$, and a real number $x\in [0,\pi(T)]$, there is a unique     real trajectory $\eta$ starting at $0$  such  that
$$\mathcal P\eta=\pi\, \textrm{ and } \,x=-\inf_{0\le s\le T}\eta(s).$$
It satisfies $\eta(t)=\pi(t)-2\min(x,\inf_{t\le s\le T}\pi(s))$, $t\in [0,T]$. In other words, a path defined on $[0,T]$ is entirely determined by  its image by the Pitman transform and a real number that  we will call  a string coordinate, according to the terminology of   Littelmann.   It follows that one can construct a standard real Brownian motion starting from a Bessel 3 process and a suitable real random variable.  This construction generalizes to the case of finite Coxeter groups \cite{bbo}.

We propose  to give an analog of this recontruction   for the case of the conditioned Brownian motion of \cite{boubou-defo}. Let us nevertheless notice that  our reconstruction is of a very different nature from the one previously described in the context of a finite Coxeter group. In the latter case  actually the   reconstruction is a direct consequence of a deterministic result, whereas our result is a purely probabilistic one.  This is a reconstruction in law.

 We use results obtained in   \cite{boubou-defo} but  our approach is quite different  from the one adopted in this last paper.  Indeed the proof of  Pitman's theorem in \cite{boubou-defo} relies on some approximations of Brownian motions in the fundamental Weyl chamber of the affine Coxeter group in type $A_1^1$ by Brownian motions in fundamental chambers of dihedral groups and the version of Pitman's theorem for these groups established in \cite{bbobis}. Instead we use approximations by  random walks defined using the Littelmann path model for the affine Kac-Moody algebra $A_1^1$. Such random walks have been originally introduced by C. Lecouvey, E. Lesigne and M. Peign\'e  in  \cite{LLP}.

  It  has been proved in \cite{defo2} (see also \cite{defo4}) that  these last processes  can also be approximated by   random walks defined using the Littelmann path model for the affine Kac-Moody algebra $A_1^1$. Such random walks have been originally introduced by C. Lecouvey, E. Lesigne and M. Peign\'e  in  \cite{LLP}.  These are the approximations   we use here.  Their laws offer the advantage of being given by explicit formulas coming from representation theory, which allows to make computations. This is a huge advantage and makes our paper fall in the large category of the so-called integrable probability.

  Demazure crystals play a crucial   role in our paper.   These crystals have beautiful combinatorial properties. Nevertheless, as far as we know, they haven't been used before in the framework of integrable probability, which maybe can be explained  by the fact that they do not form a tensor category, so that they do not define an hypergroup structure which could naturally relate them    to a   Markov process  in a usual way (see for instance  \cite{Wildberger} and references therein).  Since  the Littelmann model and Demazure character formulas  that   we use      are available for any affine Kac-Moody algebra,   they might be useful for obtaining an inverse  Pitman's theorem in a more general context.

Let us make a last remark about our result. Actually, in the context of a finite Coxeter group, one can state another reconstuction theorem. In the simplest case, it states that if $\{r_t, t\ge 0\}$ is a Doob-conditioned positive standard Brownian motion  then 
$$\{r_t-2\inf_{s\ge t}r_s, t\ge 0\}$$
is a real standard Brownian motion (see \cite{revuzyor}, chapter VI, corollary 3.7). More generally, for any finite Coxeter group,  there exists such a functional transformation, which sends a conditioned Brownian motion to an unconditioned one. Such a result seems to be unattainable   for $A_1^1$. Actually, in the finite case,  the string coordinates of a Brownian motion are infinite and a Brownian motion  stands morally for the lowest weight path in the Littelmann module of  a Verma module. There is no such a lowest weight path in the case of $A_1^1$.
   
   The paper is organized as follows. In section \ref{statement} we give a statement of an inverse Pitman's theorem for $A_1^1$.   In section \ref{The affine Lie algebra} we briefly recall the necessary background on representation theory of the affine Lie algebra $A_1^1$. The Littelmann path model for a Kac-Moody algebra $A_1^1$ and its connection with   Pitman transforms is explained in  section \ref{Pitman transforms and Littelmann modules}. We define  in section \ref{RWLP} random walks with increments in a Littelmann   module   and the associated random processes in the affine Weyl chamber. These processes can be seen as  approximations of the unconditioned and conditioned Brownian  motions   introduced in section \ref{The continuous counterpart}. Finally we prove an inverse Pitman's theorem for $A_1^1$ in section \ref{An inverse Pitman's theorem}. 
   \bigskip

    {\it Acknowledgments:}  This project is supported  by the Agence Nationale de la Recherche funding CORTIPOM  ANR-21-CE40-0019.

   \section{Statement of the   theorem}\label{statement} 
   For a real $x\ge 0$, we define two  functional transformations $I^x_0$  and  $I_1^x$ acting on  the set of continuous maps $\eta:\R_+\to \R^2$ such that $\eta(t)=(t,f(t))$, where $f(t)\in \R$, for $t\ge 0$, and 
   $\lim_{t\to\infty}f(t)/t\in (0,1)$ as 
   \begin{align*}
   I^x_0\eta(t)&=\big(t,f(t)+2\min(x,\inf_{s\ge t}(s-f(s)))\big), \\
   I^x_1\eta(t)&=\big(t,f(t)-2\min(x,\inf_{s\ge t}(f(s)))\big), \quad t\ge 0.
   \end{align*}
  Let $\{B(t)=(t,b_t+t/2), t\ge 0\}$ be  a space-time Brownian motion,  where $b$ is a standard Brownian motion, and a   space-time Brownian motion   $\{A(t)=(t,a_t), t\ge 0\}$  with  a drift $1/2$, conditioned  to remain in the domain $C_{\mbox{aff}}$  defined by
   $$C_{\mbox{aff}}=\{(t,x)\in \R_+\times \R:  0<x<t\}.$$
 See section \ref{The continuous counterpart} for the definition of this process. Let $\varepsilon_n, n \geq0$ be a sequence  of independent exponential random variables with parameter $1$. Let $p \in \mathbb{N}$, define  $\xi_{0,p}(\infty)=\varepsilon_0 ,$ and, for all $k \in\{1,\dots,p\}$,
$$\frac{\xi_{k,p}(\infty)}{k}=\sum_{n=k}^{p} \frac{ 2\varepsilon_n}{n(n+1)}.$$ 
 The notational choices will be hopefully clearer  later. Then one has the following reconstruction theorem.

   \begin{theorem}\label{reconstruction}
The sequence of processes $$\{I_{0}^{\xi_{0,p}(\infty)}\dots I_{p}^{\xi_{p,p}(\infty)}A(t), t\ge 0\}, \, p\ge 0,$$
converges,  in the sense of finite dimensional distributions,  towards the space-time Brownian motion $\{B(t),t\ge 0\}$.
\end{theorem}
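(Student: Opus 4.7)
The overall strategy is to lift Theorem~\ref{reconstruction} from the Brownian setting to a discrete model of random walks on the Littelmann path model for $A_1^1$ (the Lecouvey--Lesigne--Peign\'e walks described in \cite{LLP,defo2,defo4}), prove the reconstruction there using the combinatorics of Demazure crystals, and then pass to the diffusive scaling limit. The virtue of the discrete setting is that every conditional law coming from the Pitman--Littelmann machinery is given by an explicit Demazure character, so string coordinates can be identified with honest combinatorial objects whose marginals we can compute. In particular, the appearance of the exponential random variables $\varepsilon_n$ and the rational weights $2/(n(n+1))$ in the definition of $\xi_{k,p}(\infty)$ should be recognised as the rescaled limit of the lengths of Kashiwara strings in a Demazure crystal of $A_1^1$.

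First I would fix $p$ and consider the Weyl group element $w_p=s_{i_0}s_{i_1}\cdots s_{i_p}$ of length $p+1$ (with the $i_k$ alternating between $0$ and $1$ in the order matching the statement) together with the associated Demazure module $V_{w_p}(\lambda)$ inside an integrable highest weight module. Using the Littelmann isomorphism recalled in section~\ref{Pitman transforms and Littelmann modules}, every sample path of the discrete walk that lives in the Demazure subcrystal admits a unique decomposition into its image under the iterated Pitman transforms $P_{i_0}\cdots P_{i_p}$ plus an integer tuple of string coordinates $(\kappa_{0,p},\dots,\kappa_{p,p})$. The Demazure character formula then gives the joint law of (image, string coordinates) as a product formula; conditionally on the image, the string coordinates are independent up to the geometric weights dictated by the Demazure operators. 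This is the discrete analogue of the theorem and is essentially a combinatorial identity.

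Next I would pass to the Brownian scaling limit along the lines of \cite{defo2,boubou-defo}: the unrestricted walk converges to the space-time Brownian motion $B$, the walk restricted to the Demazure crystal converges to the conditioned process $A$, and the discrete Kashiwara--Pitman operators converge to the functional transforms $I_{i_k}^{x}$ uniformly on compacts in $x$. Under the same rescaling the integer string coordinates $\kappa_{k,p}/n$ converge in distribution to the random variables $\xi_{k,p}(\infty)$; the explicit form of $\xi_{k,p}(\infty)$ as a weighted sum $\sum_{n\ge k}2\varepsilon_n/(n(n+1))$ should come out of the Demazure character computation (the exponentials arising from the geometric laws, the weights from the affine root data of $A_1^1$). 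Combining these convergences with the continuity of $(x,\eta)\mapsto I_i^x\eta$ yields the statement of Theorem~\ref{reconstruction} for each fixed $p$, and hence its conclusion by a diagonal argument.

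The main obstacle is twofold. On the analytic side, the transforms $I_i^x$ depend on the \emph{future} of the trajectory through $\inf_{s\ge t}$, which requires a careful control of the behaviour at infinity of both $A$ and $B$ (both have a nontrivial drift, so the infima are a.s.\ finite, but one must show that after applying $p$ transforms only a compact portion of the future is effectively read, with a uniform estimate in $p$). On the combinatorial-probabilistic side, the delicate point is the exchange of the two limits---the diffusive scaling $n\to\infty$ in the Lecouvey--Lesigne--Peign\'e walks and the Demazure exhaustion $p\to\infty$---since $A_1^1$ has no longest element and the Demazure subcrystal never fills the whole integrable module at finite $p$. Establishing tightness of the string coordinates uniformly in $p$, and verifying that the Demazure formulas pass to the continuous limit with the correct normalization $2/(n(n+1))$, is what I expect to absorb most of the technical work.
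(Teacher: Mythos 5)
Your plan correctly identifies the scaffolding the paper uses: replace $B$ and $A$ by the Lecouvey--Lesigne--Peign\'e walks $\Pi^m$ and $\Pi^m_+$, exploit the fact that string coordinates and Demazure characters make all conditional laws explicit, and then pass to the diffusive limit $m\to\infty$. This matches the paper's commutative-diagram strategy, including the discrete reconstruction (Proposition~\ref{partial-rec}), the scaling limits (Propositions~\ref{ConvPiPlus} and~\ref{conv-conj}), and the uniform-in-time control that you flag (Proposition~\ref{InfpourPpi}). You are also right that the exchange of the $m\to\infty$ and $p\to\infty$ limits is the crux.

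However, there is a genuine gap in the final step. You write that combining the convergences ``yields the statement of Theorem~\ref{reconstruction} for each fixed $p$, and hence its conclusion by a diagonal argument.'' This is false: for fixed $p$ the process $I_{\alpha_0}^{\xi_{0,p}(\infty)}\dots I_{\alpha_p}^{\xi_{p,p}(\infty)}A$ is \emph{not} a space-time Brownian motion. Proposition~\ref{partial-rec} shows the discrete analogue has the law of $\Pi^m$ \emph{conditioned} on $\{\xi^m_{p+1}(\infty)=0\}$, not the unconditioned law of $\Pi^m$; since $A_1^1$ has no longest element this conditioning never trivialises at finite $p$, so the theorem is an honest limit in $p$ and no diagonal argument will short-circuit it. What the paper actually does is compute, for fixed $t$ and fixed $p$, the characteristic function $\E(e^{iu\langle\Pi^m\lfloor mt\rfloor/m,\alpha_1^\vee\rangle}\mid\xi^m_{p+1}(\infty)=0)$, split it into a main term $S_1(u,m,p)$ and an error $S_2(m,p)$, and identify $\lim_{m\to\infty}S_1(u,m,p)$ via the Demazure character asymptotics of Lemma~\ref{convD} as $\E(\psi_p(u,A(t)))$ where $\psi_p$ involves $\E(e^{-iu\langle L_p,\alpha^\vee_1\rangle}1_{\xi_{\cdot,p}\in\Gamma(\lambda)})/(2\varphi_{1/2}(\lambda))$. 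Sending $p\to\infty$ then requires two non-combinatorial ingredients you do not mention at all: the a.s.\ convergence of the truncated crystal events (Lemma~\ref{conv-cryst}), and above all the closed-form identity $\E(e^{-iu\langle L,\alpha_1^\vee\rangle}\mid\xi(\infty)\in\Gamma(\lambda))=\varphi_{iu+1/2}(\lambda)/\varphi_{1/2}(\lambda)$ together with the Doob-transform identity $\E(e^{iu\langle B(t),\alpha_1^\vee\rangle})=\E(e^{iu\langle A(t),\alpha_1^\vee\rangle}\varphi_{iu+1/2}(A(t))/\varphi_{1/2}(A(t)))$ (Propositions~\ref{string-cond} and~\ref{EcondB}, imported from \cite{boubou-defo} and \cite{bbobis}). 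Without these theta-function identities there is no mechanism to recognise the $p\to\infty$ limit as the Fourier transform of $B(t)$; ``geometric laws with affine weights'' does not by itself produce $\varphi_{iu+1/2}$. Finally, note that controlling $S_2$ uses the monotonicity $\{\xi^m_{p+1}(\infty)=0\}\subset\{\xi^m_{p+1}\lfloor mt\rfloor=0\}$ to reduce it to $S_1(0,m,p)-1$, another small but essential observation missing from your outline.
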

 This theorem is a converse to Theorem 7.1 in \cite{boubou-defo}.  Let us notice that there is no correction term here. Actually the correction term in the   Pitman's Theorem proved in \cite{boubou-defo} comes from the fact that the sequence of  string coordinates associated to a Brownian motion is a convergent sequence with limit $2$.  The   law of the random sequence  in  the previous theorem  is the law of   the  string coordinates conditioned to be ultimately equal to $0$.   So this is not a surprise that no correction term is needed for this reconstruction theorem.
         \section{The affine Lie algebra $A_1^{1}$ and its representations}\label{The affine Lie algebra}
    We recall some standard facts about  the affine Lie algebra  of type $A_1^{1}$. See   \cite{Kac} for a presentation of affine Lie   algebras and their representations.  For our purpose, we only need to define and consider  a realization of a real Cartan subalgebra. Let $\mathfrak h_\R$ and are $\mathfrak h_\R^*$ two copies of $\R^3$ in standard duality. One has
$$\mathfrak h_\R=\mbox{Span}_{\mathbb R}\{c, {\alpha}^\vee_1,d\},\, \, \mathfrak h_\R^*=\mbox{Span}_{\mathbb R}\{\Lambda_0,\alpha_1,\delta\},$$  
 where $c=(1,0,0), \alpha^\vee_1=(0,1,0),  d=(0,0,1),$ and  $\Lambda_0=(1,0,0)$, $\alpha_1=(0,2,0)$, $\delta=(0,0,1)$ in $\R^3$.

 Let $\alpha^\vee_0=(1,-1,0)$ and $\alpha_0=(0,-2,1)$, so that $c=\alpha^\vee_0+\alpha^\vee_1$ and  $\delta=\alpha_0+\alpha_1$.  The vectors $\alpha_0$ and $\alpha_1$ are  the two positive simple roots of $A_1^1$ and $\alpha^\vee_0$ and $ \alpha^\vee_1$ their coroots. We denote by $\langle \cdot,\cdot\rangle$  the natural pairing. 
 The set of integral weights is
$$P=\{\lambda\in \mathfrak h_\R^*: \langle\lambda,\alpha^\vee_i\rangle \in \mathbb Z, i=0,1\},$$ 

and the set of dominant integral weights 
$$P_+=\{\lambda\in \mathfrak h_\R^*: \langle \lambda,\alpha^\vee_i\rangle\in \mathbb N, i=0,1\}.$$ 
\paragraph{\bf Highest weight representations.} For  a dominant integral weight $\lambda$, the character of the irreducible  representation $V(\lambda)$ of  $A_1^1$  with highest weight $\lambda$ is defined as the formal series    
\begin{align}\label{weightdecomp} 
\mbox{ch}_\lambda=\sum_{\beta\in P}\mbox{dim} (V(\lambda)_\beta)e^{\beta}, 
\end{align}
where $V(\lambda)_\beta$ is the weight space  corresponding to the weight $\beta$ in $V(\lambda)$. Let $e^\beta(h)=e^{ \langle \beta,h\rangle  }$ for $h\in \mathfrak{h}_\R$. The series converges absolutely if $ \langle \delta,h\rangle>0$ otherwise it diverges.
  The character can be extended to the set of   $h\in \mathfrak h_\R\oplus i \mathfrak h_\R$ such that $\Re\langle \delta,h\rangle>0$.
  The   Weyl group $W$ is    the group  generated by the reflections  $s_{\alpha_{i}}$, for $i\in \{0,1\}$,   defined on $\mathfrak{h}^*_\R$ by $$s_{\alpha_{i}}(\beta)=\beta-\langle \beta,\alpha^\vee_i\rangle\alpha_i, \, \beta\in \mathfrak h_\R^*.$$ Weyl's character formula (chapter 10 of \cite{Kac}) states  that 
\begin{align}\label{Weyl}
\mbox{ch}_\lambda=\frac{\sum_{w\in W}\det(w)e^{w(\lambda+\rho)-\rho}}{\prod_{\alpha\in R_+}(1-e^{-\alpha}) },
\end{align}
where $\det(w)$ is the determinant of the linear map $w$, $\rho=2\Lambda_0+\frac{\alpha_1}{2}$ and  $R_+$ is the set of positive roots defined by $$\label{rac_pos}R_+=\{\alpha_0+n\delta, \alpha_1+n\delta, (n+1)\delta, n\in \N\}.$$  In particular 
\begin{align}\label{ParticularWeyl}
\prod_{\alpha\in R_+}(1-e^{-\alpha})=\sum_{w\in W}\det(w)e^{w(\rho)-\rho}.
\end{align}
The affine Weyl group  W is the semi-direct product $T\ltimes W_0$ where $W_0$ is the subgroup generated by $s_{\alpha_1}$ and $T$ is the subgroup of transformations $t_{k}$, $k\in \Z$, defined by 
$$t_k(\lambda)=\lambda+k(\lambda,\delta)\alpha_1-(k(\lambda,\alpha_1)+k^2(\lambda,\delta))\delta, \,\, \lambda\in \mathfrak{h}^*.$$
Thus   for $\lambda\in P_+$, one has
$$\sum_{w\in W}\det(w)e^{w(\lambda+\rho)}=\sum_{k\in \Z}e^{t_k(\lambda+\rho)}-e^{t_ks_{\alpha_1}(\lambda+\rho)},$$
and for $\lambda=n\Lambda_0+m\frac{\alpha_1}{2}$, with $(m,n)\in \N^2$ such that $0\le m\le n$, $a\in \mathbb R$, and $b>0$, the Weyl--Kac character formula becomes  here 
\begin{align}\label{charA11}
\mbox{ch}_{\lambda}(a\alpha^\vee_1+bd) =\frac{\sum_{k\in \mathbb Z}\sinh(a(m+1)+2ak(n+2))e^{-b(k(m+1)+k^2(n+2))}}{\sum_{k\in \mathbb Z}\sinh(a+4ak)e^{-b(k+2k^2)}}.
\end{align}  

 \paragraph{\bf Verma modules} The  character of a Verma module with highest weight $0$ is denoted by $\mbox{ch}_{M(0)}$. Let us recall some various known expressions of this  character. First of all, one has
 \begin{align}\label{VermaProd}
\mbox{ch}_{M(0)}=\prod_{\alpha\in R_+}(1-e^{-\alpha})^{-1},
\end{align}   One has also
 \begin{align}\label{VermaLim} \mbox{ch}_{M(0)}=\lim_{\langle\lambda,\alpha^\vee_i\rangle\to \infty, i=0;1}e^{-\lambda}\mbox{ch}_{\lambda}  \end{align}
and 
 \begin{align}\label{VermaSum} \mbox{ch}_{M(0)}= (\sum_{w\in W}\det(w)e^{  w(\rho)-\rho})^{-1}, \end{align}
 the last identity being derived from the Weyl character formula.  Note that, for $\lambda\in P+$ and $h\in \mathfrak{h}_\R$ such that $\langle \delta,h\rangle>0$, one has the inequality
 \begin{align}\label{VermaMaj} \mbox{ch}_\lambda(h)e^{-\langle \lambda,h\rangle}\le \mbox{ch}_{M(0)}(h). \end{align}
\section{Pitman transforms and Littelmann modules}\label{Pitman transforms and Littelmann modules}
In this section we explain connections between the Littelmann path model and Pitman transform in the context of the affine Lie algebra $A_1^1$. For more details about the Littelmann path model   see  Peter Littelmann's papers \cite{littel,littel2}. 
  Let $\mathcal C$ be the cone generated by $  P_+$, i.e. $$\mathcal C=\{\lambda\in  {\mathfrak h}^*_\R: \langle\lambda,\alpha_i^\vee\rangle\ge 0, i\in \{0,1\}\}.$$

We fix $T>0$. A path $\pi$ defined on $[0,T]$ is a continuous piecewise linear function $\pi:[0,T]\to  {\mathfrak h}_\R^*$ such that $\pi(0)=0$.  It is called dominant if $\pi(t)\in \mathcal C$ for all $t\in [0,T]$. It is called integral if $\pi(T)\in   P$ and $$\min_{t\in [0,T]}\langle\pi(t),\alpha_i^\vee\rangle\in \Z,\,\textrm{  for   } i\in \{0,1\}.$$    
The Pitman tranforms $\mathcal P_{\alpha_i}$, $i\in \{0,1\}$, are defined on the set on continuous functions $\eta:[0,T]\to \mathfrak h_\R^*$, such that $\eta(0)=0$, by the formula 
  $$\mathcal P_{\alpha_i}\eta(t)=\eta(t)-\inf_{0\le s\le t}\langle \eta(s),\alpha_i^\vee\rangle\alpha_i,\quad t\in [0,T].$$
  Let us notice that  the fact that $\langle\alpha_i,\alpha^\vee_i\rangle=2$  implies that the definition above coincides with  the one of the original Pitman transform. 
   For   a dominant path   $\pi$ defined on $[0,T]$, such that $\pi(T)\in   P_+$, the Littelmann  module $B\pi$ generated by $\pi$ is the set of integral  paths   $\eta$ defined on  $[0,T]$ such that there exists  $k\in \N$ such that $$\mathcal P_{\alpha_{k}}\dots\mathcal P_{\alpha_{0}}\eta=\pi,$$
where $\alpha_{2k}=\alpha_0$ and $\alpha_{2k+1}=\alpha_1$. If $\pi$ is a dominant integral path defined on $[0,T]$ such that $\pi(T)=\lambda\in P_+$, then  the Littelmann path theory ensures that 
\begin{align}\label{char-litt}  \mbox{ch}_{\lambda} =\sum_{\eta\in  B{\pi}}e^{\eta(T)}.
\end{align}
 Moreover for   an integral   path $\eta$ defined on  $[0,T]$ there exists $k_0$ such that for all $k\ge k_0$, one has
$$ \mathcal P_{\alpha_{k}}\dots\mathcal P_{\alpha_{0}}\eta(t)=\mathcal P_{\alpha_{k_0}}\dots\mathcal P_{\alpha_{0}}\eta(t), \quad t\in[0,T]\footnote{It has been proved in \cite{boubou-defo} that this fact remains true if $\eta$  is a continuous, piecewise $C^1$ trajectory in $\mathfrak h_\R^*$.}.$$  
Thus for an integral   path $\eta$ defined on  $[0,T]$, one defines a dominant path $\mathcal{P}\eta$ on $[0,T]$, by   $$\mathcal{P}\eta(t)=\lim_{k\to \infty}\mathcal P_{\alpha_{k}}\dots\mathcal P_{\alpha_{0}}\eta(t), \quad t\in[ 0,T].$$

 \paragraph{\bf String coordinates}    Let $\ell^{(\infty)}(\N)$  be the set of sequences  of nonnegative integers, almost all zero.     Let $\pi$ be a dominant path defined on $[0,T]$ and $\eta\in B\pi$. There exists a unique sequence of nonnegative  integers,    $\mathfrak{a}(\eta):=(a_k)_{k\ge 0}$ almost all zero,  such that
\begin{align}\label{def-string} 
\mathcal P_{\alpha_{m}}\dots\mathcal P_{\alpha_{0}}\eta(T)  =\eta(T)+\sum_{k=0}^ma_k\alpha_{k},\quad m\ge 0.
\end{align} 
Peter Littelmann proved in  \cite{littel2}  that the  map $$\mathfrak{a} : \eta\in B\pi \to \mathfrak{a}(\eta)\in \ell^{(\infty)}(\N)$$  is   injective. The  image of this map, which depends on $\pi$ only through $\pi(T)$, is the set $B(\pi(T))$ defined below. It is the set of vertices of a Kashiwara  crystal  \cite{kash93}. The sets $B(\infty)$ and $B(\lambda)$ defined below  are for instance 
respectively described in \cite{nakash} and \cite{littel2}.   
\begin{definition} \label{crystals} The subset  $B(\infty)$ of $\ell^{(\infty)}(\N)$ is defined as 
\begin{align*} 
B(\infty)&=\{a=(a_k)_{k\ge 0}\in \ell^{(\infty)}(\N): \frac{a_k}{k}\ge \frac{a_{k+1}}{k+1}, k\ge 1\}.
\end{align*}
For  $\lambda\in P_+$, the subset $B(\lambda)$ of  $B(\infty)$   is defined as
\begin{align*}
B(\lambda)&=\{a=(a_k)_{k\ge 0}\in B(\infty) : a_p\le \langle \lambda -\sum_{k=p+1}^\infty a_k\alpha_{k},\alpha_{p}^\vee\rangle, \forall p \ge 0\}\\
&=\{a=(a_k)_{k\ge 0}\in B(\infty) : a_p\le \langle \lambda -\omega( {a})+\sum_{k=0}^p a_k\alpha_{k},\alpha_{p}^\vee\rangle, \forall p \ge 0\},
\end{align*}
where $\omega(  a)=\sum_{k=0}^\infty a_k\alpha_{k}$, which is the opposite of the weight of $  a$ as an element of  the crystal $B(\infty)$  of the Verma module of highest weight $0$.
\end{definition}
Thus identity (\ref{char-litt}) becomes 
\begin{align}\label{char-string}
\mbox{ch}_{\lambda} =\sum_{a\in  B(\lambda)}e^{\lambda-\omega(a)},
\end{align}
and the character of a Verma module is written  with the string coordinates,
 \begin{align}\label{VermaString} \mbox{ch}_{M(0)}=\sum_{a\in B(\infty)}e^{-\omega(a)}. \end{align}
The inverse function of $\mathfrak a$ can be written using the functionals $I_{\alpha_i}^{x,T}$, $i\in\{0,1\}$,  $x\ge 0$,  introduced in \cite{bbo} and defined      by   
$$I_{\alpha_i}^{x,T}f(t)=f(t)-\min(x,\inf_{T\ge s\ge t}\langle f(s), \alpha_i^\vee\rangle) \alpha_i, \quad t\in[0,T],$$
for $f:[0,T]\to \mathfrak{h}^*_\R$. It may be noted that the definition coincides with that given at the beginning of part 2. \\
For $a\in B(\lambda)$ and $\pi$ an integral dominant path on $[0,T]$ such that $\pi(T)=\lambda$,  the only  path $\eta \in B\pi$ such that $\mathfrak a(\eta)=a$ is given by
$$\eta(t)=I_{\alpha_0}^{a_0,T}\dots I_{\alpha_p}^{a_p,T}\pi(t), \quad t\in [0,T],$$
where $p$ is chosen such that $a_k=0$, for all $k\ge p+1$. Notice that if $f$ is a function defined on $\R_+$ with values in $\mathfrak h_\R^*$ such that 
$$\lim_{t\to \infty}\langle f(t),\alpha_i^\vee\rangle=+\infty, \quad i\in\{0,1\},$$
the   definition of $I_{\alpha_i}^{x,T}$, $i\in\{0,1\}$, makes sense for $T=+\infty$. In the following,   we write $I_{\alpha_i}^x$ instead of $I_{\alpha_i}^{x,+\infty}$. We notice that if $f$ is a map with values in $\R\Lambda_0\oplus \R\alpha_1$ then for $t\ge 0$, $i\in \{0,1\}$,
 $$I_{\alpha_i}^xf(t)= I^x_if(t)\mod \delta.$$

\paragraph{ \bf Demazure character}

  One can find   for instance in \cite{kash93}   an introduction to Demazure characters in the context of crystals.  
 For an integer $p\ge 0$ and for $\lambda\in P_+$,  let $w_p=s_{\alpha_p}\dots s_{\alpha_0}$, let  $\pi$ be an integral dominant path defined on  $[0,T]$ such that $\pi(T)=\lambda$,
 and  
 $B^{w_p}\pi=\{\eta\in B\pi: \mathcal P_{\alpha_p}\dots\mathcal P_{\alpha_0}\eta=\pi\}$. One defines $ \mbox{ch}^{w_p}_{\lambda}$ by the formula
    \begin{align}\label{CDLitt}
 \mbox{ch}^{w_p}_{\lambda}=\sum_{\eta\in B^{w_p}\pi} e^{\eta(T)},
 \end{align}
The function  $\mbox{ch}^{w_p}_{\lambda}$ is a Demazure character, i.e. the character of a $U(\frak{n}^+)$-module.
 Written with  the string coordinates, definition (\ref{CDLitt}) becomes
 \begin{align}\label{CDString}\mbox{ch}^{w_p}_{\lambda}=\sum_{a\in B(\lambda),\, a_{p+1}=0} e^{\lambda-\omega(a)}.
 \end{align} 
We define a  Verma--Demazure  character $\mbox{ch}^{w_p}_{M(0)}$ by
\begin{align}\label{VDString}
\mbox{ch}^{w_p}_{M(0)}=\sum_{a\in B(\infty),\, a_{p+1}=0}e^{-\omega(a)}.
\end{align}
 
\section{Random walks and Littelmann paths}  \label{RWLP}
In  this section $m$ is a fixed  positive integer. Let $\pi_{0}$  be the path defined on $[0,1]$ by 
$$\pi_{0}(t)=t\Lambda_0, \quad t\in [0,1],$$
and the Littelmann module $B{\pi_0}$ generated by $\pi_0$.  Let
  $\rho^\vee=2d+\alpha^\vee_1/2$. 
 We fix an integer $m\ge 1$. The formula
\begin{align}\label{loi-discrete}\mu^{m}(\eta)=\frac{e^{\frac{1}{m}\langle\eta(1),\rho^\vee\rangle  }}{\mbox{ch}_{\Lambda_0}(\rho^\vee/m)}, \quad \eta\in B{\pi_0},
\end{align}
defines a probability measure $\mu^{m}$ on $B{\pi_0}$.
Let $(\eta_i^{m })_{i\ge 0}$ be a sequence   of i.i.d random variables with law $\mu^{m}$ and   let $\{\Pi^{m}(t),t\ge 0\}$ be defined by
$$\Pi^{m}(t)=\eta_1^{m}(1)+\dots+\eta_{k-1}^{m}(1)+ \eta_k^{m}(t-k+1),$$ when $t\in [k-1,k[, $ for $k\in \Z_+$. We write $*$ for the usual concatenation of paths, so that for an integer $t$, the restriction of  $\Pi^{m}$ to $[0,t]$ is in $B\pi_0^{*t}$.
For $t\in \N$, let $(\xi_k^m(t))_{k\ge 0}$ be   string coordinates of $\Pi^m\vert_{[0,t]}$. Notice that  the   definition makes sense for $t=\infty$, since each string coordinate is an increasing function of $t$.

 We define a random process $\{ \Pi^m_+(t), t\ge 0\}$ with values in $\mathcal C$  by
$$\Pi^m_+(t)=\mathcal P\Pi^m(t), \quad t\ge 0.$$
 The next proposition follows from the  properties of the  Littelmann path model.   It implies in particular that \\ $\{\Pi_+^m(k), k\ge 
0\}$ is Markovian with transition probabilities given in  Theorem 4.7 of  \cite{LLP}.    It will be very useful in the whole paper as it allows to show that the Markov process $\{   \Pi^m_+(k), k\ge 0\}$ inherits many properties from the random walk $\{\Pi^m(k), k\ge 
0\}$.
\begin{prop}\label{fromMtoC} For any integers $k$ and $n$, and any   fonction $f$ defined on the set of continuous functions $\mathcal C([n,n+k],\R)$,  one has
\begin{align*}&\E\left(f(\Pi_+^m(t):  n\le t\le n+k)\vert \Pi_+^m(s), s\le n\right)\\
&\quad =\E\left(f(\Pi^m(t)+\lambda, 0\le t\le k)\frac{\mbox{ch}_{\Pi^m(k)+\lambda}(\rho^\vee/m)}{\mbox{ch}_\lambda(\rho^\vee/m)}e^{-\langle\Pi^m(k),\rho^\vee/m\rangle}1_{\lambda+\Pi^m\vert_{[0,k]} \in \mathcal C}\right),
\end{align*}
where $\lambda=\Pi_+^m(n)$.
\end{prop}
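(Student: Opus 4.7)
The plan is to compute both sides of the identity explicitly, using the density (\ref{loi-discrete}) of $\mu^m$ together with the Littelmann character formula (\ref{char-litt}), and match them term by term.

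I would first establish a causality property of $\mathcal{P}$: because each $\mathcal{P}_{\alpha_i}$ depends only on $\inf_{0 \le s \le t}$, one has $\mathcal{P}_{\alpha_i}(\eta * \eta')|_{[0,n]} = \mathcal{P}_{\alpha_i}\eta|_{[0,n]}$, and iterating (together with the stabilization on compact intervals of $\mathcal{P}_{\alpha_k} \cdots \mathcal{P}_{\alpha_0}$ recalled in section \ref{Pitman transforms and Littelmann modules}) yields $\mathcal{P}(\eta * \eta')|_{[0,n]} = \mathcal{P}\eta|_{[0,n]}$. Consequently, for any dominant integral path $\pi$ on $[0,N]$ the event $\{\Pi_+^m|_{[0,N]} = \pi\}$ coincides with $\{\Pi^m|_{[0,N]} \in B\pi\}$; summing the resulting probabilities by (\ref{char-litt}), whose analytic evaluation at $\rho^\vee/m$ is legitimate since $\langle \delta, \rho^\vee/m\rangle = 2/m > 0$, I obtain
\[
\mathbb P(\Pi_+^m|_{[0,N]} = \pi) \;=\; \sum_{\eta \in B\pi} \frac{e^{\langle \eta(N), \rho^\vee/m\rangle}}{\mbox{ch}_{\Lambda_0}(\rho^\vee/m)^N} \;=\; \frac{\mbox{ch}_{\pi(N)}(\rho^\vee/m)}{\mbox{ch}_{\Lambda_0}(\rho^\vee/m)^N}.
\]

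Applying this formula to $N = n$ and $N = n + k$ and taking the ratio gives, for any dominant $\pi * \pi' \in B\pi_0^{*(n+k)}$ extending $\pi$,
\[
\mathbb P\big(\Pi_+^m|_{[0,n+k]} = \pi * \pi' \,\big|\, \Pi_+^m|_{[0,n]} = \pi\big) \;=\; \frac{\mbox{ch}_{(\pi * \pi')(n+k)}(\rho^\vee/m)}{\mbox{ch}_\lambda(\rho^\vee/m)\, \mbox{ch}_{\Lambda_0}(\rho^\vee/m)^k}.
\]
The concatenation structure $B\pi_0^{*(n+k)} = B\pi_0^{*n} * B\pi_0^{*k}$ makes $\pi' \mapsto \eta'$ with $\eta'(t) := \pi'(n+t) - \lambda$ a bijection between dominant extensions of $\pi$ in $B\pi_0^{*(n+k)}$ and elements $\eta' \in B\pi_0^{*k}$ such that $\lambda + \eta'$ remains in $\mathcal{C}$. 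Using $\mu^m(\eta')\, e^{-\langle \eta'(k), \rho^\vee/m\rangle} = \mbox{ch}_{\Lambda_0}(\rho^\vee/m)^{-k}$, the conditional expectation of $f(\Pi_+^m|_{[n,n+k]})$ reassembles into
\[
\mathbb E\!\left[ f(\lambda + \Pi^m(\cdot))\, \frac{\mbox{ch}_{\lambda + \Pi^m(k)}(\rho^\vee/m)}{\mbox{ch}_\lambda(\rho^\vee/m)}\, e^{-\langle \Pi^m(k), \rho^\vee/m\rangle}\, 1_{\lambda + \Pi^m|_{[0,k]} \in \mathcal{C}} \right],
\]
which is exactly the right-hand side stated in the proposition.

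The main obstacle is establishing the causality identity $\mathcal{P}(\eta * \eta')|_{[0,n]} = \mathcal{P}\eta|_{[0,n]}$ in the affine $A_1^1$ setting, since $\mathcal{P}$ is defined as a limit of an infinite sequence of Pitman transforms and one must justify that the restriction to $[0,n]$ commutes with this limit. Beyond that, the proof is essentially character-theoretic bookkeeping, and the emergence of the Doob factor $\mbox{ch}_{\lambda + \Pi^m(k)}(\rho^\vee/m)/\mbox{ch}_\lambda(\rho^\vee/m)$ reflects that $\Pi_+^m$ is the $h$-transform of the random walk $\Pi^m$ with respect to the harmonic function $\lambda \mapsto \mbox{ch}_\lambda(\rho^\vee/m)$.
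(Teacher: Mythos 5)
Your proposal is correct and spells out, via the standard Littelmann character computation, the argument the paper only alludes to ("follows from the properties of the Littelmann path model"), so it takes essentially the same approach. You also rightly identify and resolve the one point needing care, namely the causality $\mathcal{P}(\eta * \eta')|_{[0,n]} = \mathcal{P}\eta$, which follows since each finite composition $\mathcal{P}_{\alpha_j}\cdots\mathcal{P}_{\alpha_0}$ only sees the path on $[0,t]$ and the limit defining $\mathcal{P}$ stabilizes after finitely many steps on every compact interval.
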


The next proposition follows from   the fact that the image of a Littelmann module $B\pi$ under $\mathfrak a$  depends on $\pi$ only through the final value of $\pi$.  
\begin{prop} \label{preprop-string} For $u\in \N$ and $f$ a real function defined on $B(\infty)$ one has 
$$\E\left(f(\xi^m(u))\vert  \Pi_+^m(t), t\le u\right)=\frac{\sum_{a\in B(  \Pi_+^m(u))}f( a)e^{\langle \Pi_+^m(u)-\omega(a),\rho^\vee/m\rangle }}{\sum_{a\in B(  \Pi_+^m(u))} e^{\langle \Pi_+^m(u)-\omega(a),\rho^\vee/m\rangle} }$$
where $\xi^m(u)=(\xi_k^m(u))_{k\ge 0}$.
\end{prop}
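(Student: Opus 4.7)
The plan is to unfold the unconditional law of $\Pi^{m}|_{[0,u]}$ using the exponential-tilting form of $\mu^{m}$ and then split the sample space along the fibres of the Pitman transform via the string-coordinate bijection. Because $\mu^{m}(\eta)\propto e^{\frac{1}{m}\langle \eta(1),\rho^\vee\rangle}$ on $B\pi_{0}$ and the $\eta_{i}^{m}$ are i.i.d., the joint density of a specific concatenation $\eta=\eta_{1}\ast\dots\ast\eta_{u}\in B\pi_{0}^{\ast u}$ is
\begin{equation*}
\P\bigl(\Pi^{m}|_{[0,u]}=\eta\bigr)=\frac{e^{\frac{1}{m}\langle \eta(u),\rho^\vee\rangle}}{\mbox{ch}_{\Lambda_{0}}(\rho^\vee/m)^{u}},
\end{equation*}
so that under the unconditional law only the endpoint weight $\eta(u)$ enters the numerator, and the denominator depends on nothing but $u$.

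Next I would invoke the Littelmann structure on $B\pi_{0}^{\ast u}$. For each dominant integral $\pi\in B\pi_{0}^{\ast u}$ with $\pi(u)=\lambda\in P_{+}$, the fibre $B\pi=\{\eta\in B\pi_{0}^{\ast u}:\mathcal{P}\eta=\pi\}$ is in bijection with $B(\lambda)$ through the string map $\mathfrak{a}$, and the defining identity (\ref{def-string}) forces $\eta(u)=\lambda-\omega(\mathfrak{a}(\eta))$ for every $\eta\in B\pi$. This is precisely the content of the parenthetical remark preceding the statement, namely that $\mathfrak{a}(B\pi)$ depends on $\pi$ only through its final point $\lambda$.

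Finally I would condition. Since $\Pi_{+}^{m}=\mathcal{P}\Pi^{m}$, the event $\{\Pi_{+}^{m}(t)=\pi(t),\,t\le u\}$ coincides with the fibre $\{\mathcal{P}(\Pi^{m}|_{[0,u]})=\pi\}$, so the conditional law of $\Pi^{m}|_{[0,u]}$ is supported on $B\pi$ and assigns to each $\eta\in B\pi$ a mass proportional to $e^{\frac{1}{m}\langle\eta(u),\rho^\vee\rangle}=e^{\langle\lambda-\omega(\mathfrak{a}(\eta)),\rho^\vee/m\rangle}$. Transporting to $B(\lambda)$ along the bijection $\mathfrak{a}$ and summing $f$ against this weighted counting measure yields the stated formula with $\lambda=\Pi_{+}^{m}(u)$. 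The one point deserving a word of justification is the identification of the $\sigma$-algebra generated by the continuous path $\{\Pi_{+}^{m}(t),\,t\le u\}$ with that generated by $\mathcal{P}(\Pi^{m}|_{[0,u]})$, which is tautological from the definition of $\Pi_{+}^{m}$; everything else reduces to bookkeeping between $B\pi$ and $B(\lambda)$.
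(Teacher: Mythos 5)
Your argument is correct and is precisely the fleshed-out version of the paper's one-line justification: the paper simply cites that $\mathfrak a(B\pi)=B(\pi(T))$ depends on $\pi$ only through its endpoint, while you expand this by explicitly writing the unconditional law as the exponential tilting $e^{\frac{1}{m}\langle\eta(u),\rho^\vee\rangle}/\mathrm{ch}_{\Lambda_0}(\rho^\vee/m)^u$, noting that the fibre of $\mathcal P$ over a dominant $\pi$ is $B\pi$, and transporting the weights to $B(\lambda)$ via $\mathfrak a$ using $\eta(u)=\lambda-\omega(\mathfrak a(\eta))$. This matches the intended proof; no gap.
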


 \begin{lemma}\label{LGNM} For $i\in \{0,1\}$,  
 $  \langle\Pi^m(k),\alpha^\vee_i\rangle/k $ almost surely converges as $k$  goes to infinity towards  a positive real number.
 
\end{lemma}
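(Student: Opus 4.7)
The plan is to write $\langle \Pi^m(k),\alpha_i^\vee\rangle = \sum_{j=1}^{k} X_j^{(i)}$ with $X_j^{(i)} := \langle \eta_j^m(1),\alpha_i^\vee\rangle$ i.i.d.\ under $\mu^m$, and to apply the strong law of large numbers coordinate by coordinate.

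Combining the definition (\ref{loi-discrete}) of $\mu^m$ with the Littelmann character identity (\ref{char-litt}) applied to $\pi_0$, the moment generating function of the increments reads
$$\E\!\left[e^{\epsilon X_1^{(i)}}\right] \;=\; \frac{\mbox{ch}_{\Lambda_0}(\rho^\vee/m + \epsilon \alpha_i^\vee)}{\mbox{ch}_{\Lambda_0}(\rho^\vee/m)}, \qquad \epsilon \in \R.$$
Since $\langle \delta,\alpha_i^\vee\rangle = 0$, the shift by $\epsilon\alpha_i^\vee$ leaves $\langle \delta,\rho^\vee/m\rangle = 2/m > 0$ unchanged, so the perturbed argument stays in the convergence region of $\mbox{ch}_{\Lambda_0}$ described after (\ref{weightdecomp}). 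The MGF is thus finite on all of $\R$, $X_1^{(i)}$ has moments of all orders, and
$$\mu_i \;:=\; \E\!\left[X_1^{(i)}\right] \;=\; \partial_{\alpha_i^\vee}\log\mbox{ch}_{\Lambda_0}(\rho^\vee/m).$$
The SLLN then delivers the almost sure convergence $\langle \Pi^m(k),\alpha_i^\vee\rangle/k \to \mu_i$.

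The main obstacle is strict positivity of $\mu_i$. For this I plan to use two ingredients: the character $\mbox{ch}_{\Lambda_0}$ is invariant under the dual Weyl action $s_{\alpha_i}(h) = h - \langle \alpha_i,h\rangle \alpha_i^\vee$ on $\mathfrak h_\R$ (because the weight multiplicities $\dim V(\Lambda_0)_\mu$ are $W$-invariant); and $\rho^\vee = 2d + \alpha_1^\vee/2$ satisfies $\langle \alpha_i,\rho^\vee\rangle = 1$ for both $i=0,1$, whence $s_{\alpha_i}(\rho^\vee) = \rho^\vee - \alpha_i^\vee$. Setting $f_i(t) := \log \mbox{ch}_{\Lambda_0}(\rho^\vee/m + t\alpha_i^\vee)$ and using $s_{\alpha_i}(\alpha_i^\vee) = -\alpha_i^\vee$, a short computation gives
$$s_{\alpha_i}\bigl(\rho^\vee/m + t\alpha_i^\vee\bigr) \;=\; \rho^\vee/m - (1/m + t)\alpha_i^\vee,$$
so $W$-invariance yields the functional equation $f_i(t) = f_i(-1/m - t)$; in other words $f_i$ is symmetric about $t = -1/(2m)$. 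On the other hand $f_i$ is strictly convex, being the logarithm of a sum of positive exponentials in $t$ with non-constant exponents $\langle \mu,\alpha_i^\vee\rangle$ (the weights of $V(\Lambda_0)$ do not all pair identically with $\alpha_i^\vee$; for instance $\Lambda_0$ and $\Lambda_0-\alpha_0$ give distinct values for both $i$). A strictly convex symmetric function attains its minimum at its axis of symmetry and is strictly increasing to the right of it; since $0 > -1/(2m)$, we conclude $\mu_i = f_i'(0) > 0$, which is the desired positivity.
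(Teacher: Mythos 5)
Your proof is correct, and it reaches the positivity of the mean by a route genuinely different from the paper's. The paper's own proof is one line: it cites Proposition~5.4 of \cite{LLP} (which shows $\E(\eta(1))$ lies in the interior of $\mathcal C$ in a more general Kac--Moody setting), adds that in the present case the claim can be checked directly from the explicit weight multiplicities of $V(\Lambda_0)$ in \cite{HK}, and concludes by the law of large numbers. You instead compute the moment generating function $\E[e^{\epsilon X_1^{(i)}}]=\mbox{ch}_{\Lambda_0}(\rho^\vee/m+\epsilon\alpha_i^\vee)/\mbox{ch}_{\Lambda_0}(\rho^\vee/m)$ via the Littelmann character identity, check finiteness on all of $\R$ from $\langle\delta,\alpha_i^\vee\rangle=0$, and then get strict positivity of $f_i'(0)$ from the $W$-invariance of the character (which gives the symmetry axis $t=-1/(2m)$, using $\langle\alpha_i,\rho^\vee\rangle=1$) together with strict log-convexity of the weighted exponential sum. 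All the computational ingredients check out: the dual reflection formula $s_{\alpha_i}(h)=h-\langle\alpha_i,h\rangle\alpha_i^\vee$, the identity $s_{\alpha_i}(\rho^\vee)=\rho^\vee-\alpha_i^\vee$, and the non-degeneracy witness $\Lambda_0$, $\Lambda_0-\alpha_0$ are all correct. The advantage of your approach is that it is self-contained, does not require enumerating the weights of $V(\Lambda_0)$, and makes the source of positivity structural (Weyl symmetry about $-\rho^\vee/2m$ plus convexity) rather than computational; the paper's route is shorter because it can lean on a reference already in its bibliography.
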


\begin{proof}  In a more general context, it has been proved in \cite{LLP}, Proposition 5.4,     that $\E(\eta(1)) $ is the interior of $\mathcal C$. In our particular case,  it is easily   proved using the explicit description of the weights of $V(\Lambda_0)$ given  for instance
in chapter 9 of \cite{HK}. The   convergence follows from a law of large numbers. \end{proof}
The   following lemma is a first useful application of Proposition  \ref{fromMtoC}.
\begin{lemma}\label{LGNC}  For $i\in \{0,1\}$,   in probability,
 $\lim_{k\to\infty}  \langle\Pi_+^m(k),\alpha^\vee_i\rangle =+\infty.$
\end{lemma}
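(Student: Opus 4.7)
The plan is to reduce the statement to its unconditioned counterpart, Lemma~\ref{LGNM}, through the change-of-measure formula of Proposition~\ref{fromMtoC}. Taking $n=0$ in that identity, so that $\lambda=\Pi_+^m(0)=0$ and $\mbox{ch}_{\lambda}(\rho^\vee/m)=1$, and specializing $f$ to be the indicator of $\{\langle \pi(k),\alpha_i^\vee\rangle\le M\}$, I would obtain, for every $M>0$,
\[
\P\bigl(\langle\Pi_+^m(k),\alpha_i^\vee\rangle\le M\bigr)=\E\Bigl(\mbox{ch}_{\Pi^m(k)}(\rho^\vee/m)\,e^{-\langle\Pi^m(k),\rho^\vee/m\rangle}\,\mathbf 1_{\{\Pi^m\vert_{[0,k]}\in\mathcal C\}}\,\mathbf 1_{\{\langle\Pi^m(k),\alpha_i^\vee\rangle\le M\}}\Bigr).
\]
Convergence in probability of $\langle\Pi_+^m(k),\alpha_i^\vee\rangle$ to $+\infty$ is equivalent to the right-hand side tending to $0$ as $k\to\infty$ for each $M>0$.

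I would then conclude by dominated convergence. On the event $\{\Pi^m\vert_{[0,k]}\in\mathcal C\}$ the increments of $\Pi^m$ ensure $\Pi^m(k)\in P\cap\mathcal C=P_+$, and since $\langle\delta,\rho^\vee/m\rangle=2/m>0$, inequality (\ref{VermaMaj}) gives
\[
\mbox{ch}_{\Pi^m(k)}(\rho^\vee/m)\,e^{-\langle\Pi^m(k),\rho^\vee/m\rangle}\le\mbox{ch}_{M(0)}(\rho^\vee/m),
\]
a finite constant independent of $k$. The integrand on the right of the displayed identity is therefore dominated by $\mbox{ch}_{M(0)}(\rho^\vee/m)\,\mathbf 1_{\{\langle\Pi^m(k),\alpha_i^\vee\rangle\le M\}}$. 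Lemma~\ref{LGNM} says that $\langle\Pi^m(k),\alpha_i^\vee\rangle/k$ converges almost surely to a strictly positive limit, so $\langle\Pi^m(k),\alpha_i^\vee\rangle\to+\infty$ almost surely and the indicator vanishes in the limit. Dominated convergence then yields the claim.

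I expect the main obstacle to be essentially bookkeeping: verifying that $\Pi^m(k)\in P_+$ on the event $\{\Pi^m\vert_{[0,k]}\in\mathcal C\}$ so that the Verma majorization (\ref{VermaMaj}) applies, and checking the mild measurability needed to specialize Proposition~\ref{fromMtoC} to the indicator of interest. The key structural input is (\ref{VermaMaj}), which provides a universal upper bound on the Doob $h$-transform density $\mbox{ch}_\lambda(\rho^\vee/m)\,e^{-\langle\lambda,\rho^\vee/m\rangle}$ appearing as the Radon--Nikodym derivative in Proposition~\ref{fromMtoC}, making a dominated convergence argument possible.
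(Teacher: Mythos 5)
Your proof is correct and follows essentially the same route as the paper: apply Proposition~\ref{fromMtoC} at $n=0$ to rewrite $\P(\langle\Pi_+^m(k),\alpha_i^\vee\rangle\le M)$ as an expectation under the unconditioned walk, dominate the Radon--Nikodym factor by $\mbox{ch}_{M(0)}(\rho^\vee/m)$ via~(\ref{VermaMaj}), and conclude by dominated convergence using Lemma~\ref{LGNM}. The paper merely states ``Upper bound~(\ref{VermaMaj}) and Lemma~\ref{LGNM} end the proof'' where you have written out the dominated-convergence step explicitly.
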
 
\begin{proof}
Lemma \ref{LGNM} implies  that  almost surely 
 $\lim_{k\to\infty}  \langle\Pi^m(k),\alpha^\vee_i\rangle =+\infty.$ 
 For $M>0$,  $i\in \{0,1\}$ and $k\ge 1$, Proposition \ref{fromMtoC} gives 
\begin{align*}
\P&\left(\langle\Pi_+^m(k),\alpha_i^\vee\rangle <M\right)=\\
&\quad\quad\E\left(1_{ \{ \langle\Pi^m(k),\alpha_i^\vee\rangle<M\}} \mbox{ch}_{\Pi^m(k)}(\rho^\vee/m)e^{-\langle\Pi^m(k),\rho^\vee/m\rangle}1_{\Pi^m\vert_{[0,k]} \in \mathcal C}\right).
\end{align*}
Upper bound  (\ref{VermaMaj}) and Lemma \ref{LGNM} end  the proof.
\end{proof}

\begin{prop} \label{prop-string} The sequence of string coordinates $\xi^m(\infty)$ is independent of $\{  \Pi_+^m(t),t\ge 0\}$ and 
 $$\P\left(\xi^m(\infty)=a\right)=\frac{e^{-\langle\omega(a),\rho^\vee/m\rangle}}{\mbox{ch}_{M(0)}(\rho^\vee/m)}, \, \, a\in B(\infty).$$
 \end{prop}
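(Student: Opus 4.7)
The plan is to combine Proposition~\ref{preprop-string}, which identifies the conditional law of $\xi^m(N)$ given $\Pi_+^m$ up to time $N$, with the asymptotic character identity (\ref{VermaLim}) saying that $e^{-\lambda}\mathrm{ch}_\lambda\to \mathrm{ch}_{M(0)}$ when $\langle \lambda,\alpha_i^\vee\rangle\to\infty$ for $i=0,1$. Fix $a\in B(\infty)$ with finite support contained in $\{0,\dots,p\}$, a time $T>0$, and a bounded event $A$ measurable with respect to $\sigma(\Pi_+^m(t), t\le T)$. For any integer $N\ge T$, Proposition~\ref{preprop-string} gives
\begin{equation*}
\P\bigl(\xi^m(N)=a,\ A\bigr)=\E\!\left(\mathbf{1}_A\,\mathbf{1}_{a\in B(\Pi_+^m(N))}\,\frac{e^{\langle \Pi_+^m(N)-\omega(a),\rho^\vee/m\rangle}}{\mathrm{ch}_{\Pi_+^m(N)}(\rho^\vee/m)}\right),
\end{equation*}
and the integrand is bounded by $1$ since it is a conditional probability.

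Next I would let $N\to\infty$ on the right hand side. By Lemma~\ref{LGNC} one has $\langle\Pi_+^m(N),\alpha_i^\vee\rangle\to\infty$ in probability for $i\in\{0,1\}$, so in particular along any subsequence an almost sure extraction is possible. Since $a$ has finite support, the finitely many inequalities defining $B(\lambda)$ in Definition~\ref{crystals} are eventually satisfied, so $\mathbf{1}_{a\in B(\Pi_+^m(N))}\to 1$. Moreover (\ref{VermaLim}) evaluated at $\rho^\vee/m$ (which is admissible because $\langle \delta,\rho^\vee/m\rangle=2/m>0$) gives
\begin{equation*}
\frac{e^{\langle \Pi_+^m(N)-\omega(a),\rho^\vee/m\rangle}}{\mathrm{ch}_{\Pi_+^m(N)}(\rho^\vee/m)}\longrightarrow \frac{e^{-\langle \omega(a),\rho^\vee/m\rangle}}{\mathrm{ch}_{M(0)}(\rho^\vee/m)}.
\end{equation*}
Combining these two convergences with the fact that the integrand lies in $[0,1]$, dominated convergence (applied along an almost surely convergent subsequence, then promoted to full convergence since the limit is the same) yields
\begin{equation*}
\P\bigl(\xi^m(N)=a,\ A\bigr)\longrightarrow \P(A)\cdot\frac{e^{-\langle \omega(a),\rho^\vee/m\rangle}}{\mathrm{ch}_{M(0)}(\rho^\vee/m)}.
\end{equation*}

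Finally I would identify the left hand limit with $\P(\xi^m(\infty)=a,\ A)$. Because each coordinate $\xi_k^m(N)$ is nondecreasing in $N$, if $\xi^m(\infty)=a$ then for $N$ large enough $\xi^m(N)=a$, whereas if $\xi^m(\infty)\ne a$ then some coordinate either exceeds $a_k$ eventually or stays strictly below $a_k$ forever, so $\xi^m(N)\ne a$ for all sufficiently large $N$. Hence $\mathbf{1}_{\xi^m(N)=a}\to \mathbf{1}_{\xi^m(\infty)=a}$ almost surely, and bounded convergence gives $\P(\xi^m(N)=a,A)\to \P(\xi^m(\infty)=a,A)$. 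The resulting product formula simultaneously yields both the announced distribution of $\xi^m(\infty)$ (consistency is checked against (\ref{VermaString})) and the independence of $\xi^m(\infty)$ from $\{\Pi_+^m(t),t\ge 0\}$, by letting $A$ range over a $\pi$-system generating the $\sigma$-algebra of the process.

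The main subtle point I expect to encounter is the interplay between the monotonicity argument for $\mathbf{1}_{\xi^m(N)=a}$ and the fact that Lemma~\ref{LGNC} provides only convergence in probability, not almost sure convergence; the cleanest remedy is the subsequence principle, using that the limiting value of the right hand side is independent of the subsequence, together with the uniform bound by $1$ on the integrand.
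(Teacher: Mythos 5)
Your proof is correct and takes essentially the same route as the paper: both rely on Proposition~\ref{preprop-string} to write $\P(\xi^m(N)=a,\,A)$ as an expectation of a conditional probability bounded by $1$, use Lemma~\ref{LGNC} together with~(\ref{VermaLim}) to pass to the limit in $N$, and obtain the product formula which encodes both the law and the independence. The only difference is stylistic: the paper runs an explicit $\varepsilon$--$M$--$u_0$ estimate, while you invoke the subsequence principle plus bounded convergence, and you spell out the (monotonicity) justification of $\mathbf{1}_{\{\xi^m(N)=a\}}\to\mathbf{1}_{\{\xi^m(\infty)=a\}}$ that the paper leaves implicit.
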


\begin{proof} Let $T\ge 0$, $a\in B(\infty)$ and  $f$ be a  real valued    function defined on $B\pi_{0}^{*T}$ that we suppose bounded by $1$. One has
$$\E\left( f(  {\Pi_+^m}_{\vert_{[0,T]}})1_{\{\xi^m(\infty)=a\}} \right)=\lim_{u\to \infty}\E\left( f(  {\Pi_+^m}_{\vert_{[0,T]}})1_{\{\xi^m(u)=a \}}\right).$$
Let us fix $\varepsilon >0$. We choose $M\ge 0$ such that  if $\lambda\in P_+$ and satisfies $$\langle\alpha^\vee_i,\lambda\rangle\ge M,\quad \textrm{for } i\in\{0,1\},$$ then one has $$ a\in B(\lambda)\, \textrm{ and } \,\left\vert \frac{1}{e^{-\langle{\lambda,\rho^\vee/m\rangle}}\mbox{ch}_{\lambda}(\rho^\vee/m)}-\frac{1}{\mbox{ch}_{M(0)}(\rho^\vee/m)}\right\vert\le \varepsilon.$$
Lemma \ref{LGNC} implies that there exists $u_0\in \N$ such that for all integer $u\ge u_0$ 
$$\P\left( \langle\alpha^\vee_i,\Pi_+^m(u)\rangle\ge M, i\in\{0,1\}\right)\ge 1-\varepsilon.$$
By conditioning on $\{\Pi_+^m(t), 0\le t\le u\}$ in the lefthand side expectation of the following identity  one obtains by proposition \ref{preprop-string},   for  an integer $u\ge T$,
$$ \E\left( f(  {\Pi_+^m}_{\vert_{[0,T]}})1_{\{\xi^m(u)=a\} }\right)=\E\left(  f(  {\Pi_+^m}_{\vert_{[0,T]}})\frac{e^{-\langle \omega(a),\rho^\vee/m\rangle}1_{B(\Pi_+^m(u))}(a)}{e^{-\langle \Pi_+^m(u),\rho^\vee/m\rangle}\mbox{ch}_{\Pi_+^m(u)}(\rho^\vee/m)}\right).$$
  It implies that for an integer $u\ge u_0$,
$$\left\vert \E\left( f(  {\Pi_+^m}_{\vert_{[0,T]}})1_{\{\xi^m(u)=a\} }\right)-\E\left(  f(  {\Pi_+^m}_{\vert_{[0,T]}})\right)\frac{e^{-\langle \omega(a),\rho^\vee/m\rangle} }{\mbox{ch}_{M(0)}(\rho^\vee/m)}\right\vert \le 2\varepsilon,$$
which gives the lemma.
\end{proof}
Proposition \ref{prop-string} implies immediately the following corollary. 
\begin{coro} \label{Verm-Dem} For $p\ge 0$,
$$\P\left(\xi_{p+1}^m(\infty)=0\right)=\frac{\mbox{ch}^{w_p}_{M(0)}(\rho^\vee/m)}{\mbox{ch}_{M(0)}(\rho^\vee/m)}.$$
\end{coro}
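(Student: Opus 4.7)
The plan is to obtain the corollary as a direct marginalization of the explicit law of the full string-coordinate sequence $\xi^m(\infty)$ furnished by Proposition \ref{prop-string}. The event $\{\xi_{p+1}^m(\infty) = 0\}$ decomposes as the disjoint union of the atoms $\{\xi^m(\infty) = a\}$ as $a = (a_k)_{k \geq 0}$ ranges over the sub-crystal $\{a \in B(\infty) : a_{p+1} = 0\}$. Summing the explicit probabilities of these atoms gives
\begin{align*}
\P\left(\xi_{p+1}^m(\infty) = 0\right)
= \sum_{\substack{a \in B(\infty)\\ a_{p+1}=0}} \P\left(\xi^m(\infty)=a\right)
= \frac{1}{\mbox{ch}_{M(0)}(\rho^\vee/m)}\sum_{\substack{a \in B(\infty)\\ a_{p+1}=0}} e^{-\langle\omega(a),\rho^\vee/m\rangle}.
\end{align*}
The remaining step is to recognize the right-hand numerator as $\mbox{ch}^{w_p}_{M(0)}(\rho^\vee/m)$ by the generating-function definition (\ref{VDString}) of the Verma--Demazure character, evaluated at $h = \rho^\vee/m$.

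There is no substantive obstacle: the full content lies in Proposition \ref{prop-string}, which already identifies the law of $\xi^m(\infty)$ as the Gibbs measure with energy $\langle \omega(a),\rho^\vee/m\rangle$ and normalizing constant $\mbox{ch}_{M(0)}(\rho^\vee/m)$. The only point to verify is that the sums converge absolutely, which follows from $\langle \delta, \rho^\vee/m\rangle = 2/m > 0$, placing $\rho^\vee/m$ in the domain of convergence recalled in Section \ref{The affine Lie algebra}. Hence the corollary drops out as a one-line partition-function computation.
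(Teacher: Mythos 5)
Your proof is correct and is precisely the argument the paper has in mind when it states the corollary follows immediately from Proposition \ref{prop-string}: one partitions $\{\xi_{p+1}^m(\infty)=0\}$ into the atoms $\{\xi^m(\infty)=a\}$ with $a_{p+1}=0$, sums the Gibbs weights, and reads off the numerator as the Verma--Demazure character from definition (\ref{VDString}). The convergence remark is a nice bit of hygiene but matches what the paper already established in Section \ref{The affine Lie algebra}.
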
 
Since  $$\{\Pi^m(t)\in \mathcal C, \, t\ge 0 \}=\{\xi^m(\infty)=0\},$$  Proposition \ref{prop-string} has a  second corollary, which  has already been proved in \cite{LLP} by a quite different method. This corollary is not   useful for our purpose, nevertheless  it is worth giving it.
 \begin{coro}\label{LLPproba}   One has
 $\P\left(\Pi^m(t)\in \mathcal C, \, t\ge 0\right) =(\mbox{ch}_{M(0)}(\rho^\vee/m))^{-1}.
 $ \end{coro}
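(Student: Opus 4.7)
The plan is very short: the corollary is essentially the specialisation $a=0$ of Proposition \ref{prop-string}, combined with the set identity
\[
\{\Pi^m(t)\in\mathcal C,\ t\ge 0\}=\{\xi^m(\infty)=0\}
\]
that is invoked immediately before the corollary. So the first thing I would do is justify this set identity. In the forward direction, if $\Pi^m$ stays in $\mathcal C$ then $\langle \Pi^m(s),\alpha_i^\vee\rangle\ge 0$ for all $s$ and $i\in\{0,1\}$; since $\Pi^m(0)=0$, the infimum appearing in each Pitman transform $\mathcal P_{\alpha_i}$ is $0$, so $\mathcal P_{\alpha_{k}}\cdots\mathcal P_{\alpha_{0}}\Pi^m$ coincides with $\Pi^m$ for every $k$, which by definition \eqref{def-string} forces every string coordinate $\xi_k^m(\infty)$ to vanish. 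Conversely, if $\xi^m(\infty)=0$ one argues inductively: $\xi_0^m(\infty)=0$ means $\inf_{s}\langle \Pi^m(s),\alpha_0^\vee\rangle=0$, so $\Pi^m$ lies in the half-space $\langle\cdot,\alpha_0^\vee\rangle\ge 0$ and $\mathcal P_{\alpha_0}\Pi^m=\Pi^m$; then $\xi_1^m(\infty)=0$ gives $\mathcal P_{\alpha_1}\Pi^m=\Pi^m$, so $\Pi^m$ also stays in $\{\langle\cdot,\alpha_1^\vee\rangle\ge 0\}$, hence in $\mathcal C$.

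With the set identity in hand, the second step is to apply Proposition \ref{prop-string} at $a=0\in B(\infty)$. Since $\omega(0)=\sum_{k}0\cdot\alpha_k=0$, the proposition yields
\[
\P\bigl(\xi^m(\infty)=0\bigr)=\frac{e^{-\langle\omega(0),\rho^\vee/m\rangle}}{\mbox{ch}_{M(0)}(\rho^\vee/m)}=\frac{1}{\mbox{ch}_{M(0)}(\rho^\vee/m)},
\]
which is exactly the claimed probability.

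The only non-routine point is the set identity, but both inclusions follow directly from the definition of the Pitman transforms $\mathcal P_{\alpha_i}$ and of the string coordinates $\xi^m(\infty)$ together with the fact that integral dominant paths starting at $0$ are fixed points of $\mathcal P_{\alpha_i}$ for $i\in\{0,1\}$. No further ingredient from representation theory is needed beyond what Proposition \ref{prop-string} already provides.
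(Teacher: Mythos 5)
Your proposal is correct and matches the paper's approach exactly: the corollary is the specialization $a=0$ of Proposition \ref{prop-string}, combined with the set identity $\{\Pi^m(t)\in\mathcal C,\,t\ge 0\}=\{\xi^m(\infty)=0\}$, which the paper states just before the corollary and which you justify directly from the definition of the Pitman transforms and the monotonicity of the string coordinates.
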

 
\section{The continuous counterpart}\label{The continuous counterpart}
 The  random processes introduced in section \ref{RWLP} are approximations of continuous time  random processes  defined in this section. For this, let us define the affine cone $$C_{\mbox{aff}}=\{(t,x)\in\R_+\times \R_+ : 0<x<t \}.$$ Let $\{B(t)=t\Lambda_0+(b_t+t/2)\alpha_1/2: t\ge 0\},$  where $\{b_t: t\ge 0\}$ is a standard real Brownian motion starting from $0$. Let $\varphi_{1/2}$ be a function defined on $\R_+^*\times \R$ by  
\begin{align}\label{defharm}\varphi_{1/2}(t,x)=
e^{- x/2}
\sum_{k\in \Z}{ \sinh((2kt+x)/2)}e^{-2(kx+k^2t)},  \textrm{ for } t>0, x\in \R.
\end{align}   
This is  an harmonic function for the process $B$  killed  on  the boundary of   $C_{\mbox{aff}}$. It is positive on $C_{\mbox{aff}}$ and vanishies on the boundary of $C_{\mbox{aff}}$.   Let $\{A(t), t \geq 0\}$ be the process starting from $(0,0)$, whose   law is    the Doob transformation of the law of  the process  $B$  killed on the boundary of   $C_{\mbox{aff}}$  by the   function $\varphi_{1/2}$.  This process has been introduced and studied in   \cite{ defo2, defo3} and carefully defined in \cite{boubou-defo} in the context of the present paper.
 
 The convergences in the following proposition have been proved in  \cite{defo4}. In this proposition, as in the convergence theorems of the following    sections, all the processes are considered as processes with values in     the quotient space $\mathfrak h^*_\R\mod \delta$, which is identified with $\R\Lambda_0\oplus \R\alpha_1=\R^2$.  We notice that $\alpha_0=-\alpha_1 $ in the quotient space. The set of continuous functions from $\R_+$ to $\R^2$  is equipped with the topology of uniform convergence on compact sets and we use the standard definition of convergence in distribution for a sequence of continuous processes as in Revuz and Yor (\cite{revuzyor}, XIII.1). 

\begin{prop} \label{conv-conj} 
\begin{enumerate}
\item For any $t\ge 0$, the random variable $(\Pi^m(mt)-\Pi^m\lfloor mt\rfloor)/m$ goes to $0$ in probability when $m$ goes to infinity.
\item The sequence  of processes
$$\{\frac{1}{m}\Pi^m(mt): t\ge 0\},  \quad m\ge 1,$$
 viewed in the  quotient space $\mathfrak h^*_\R \mod \delta$, converges in distribution towards the process $\{B(t) : t\ge 0\}$ when $m$ goes to infinity.  \item The sequence of processes  $$\{\frac{1}{m}  \Pi_+^{m}\lfloor mt\rfloor: t\ge 0\}, \quad m\ge 1,$$  viewed in the  quotient space $\mathfrak h^*_\R \mod \delta$, converges   towards $\{A(t): t\ge 0\}$ when $m$ goes to infinity,   in the sense of finite dimensional distributions.
 \end{enumerate}

\end{prop}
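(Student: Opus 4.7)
The plan is to treat the three claims in the order (1)--(2)--(3). The first two rest on a triangular-array functional central limit theorem for $\Pi^m$, whose ingredients come from derivatives of the character~(\ref{charA11}); the third uses the $h$-transform structure of Proposition~\ref{fromMtoC}.

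For a joint proof of (1) and (2), recall that the increments $\eta_k^m(1)$, $k\ge 1$, are i.i.d.\ under $\mu^m$. I would compute the first two moments of $\eta_k^m(1)$ modulo $\delta$ by differentiating at $u=0$ the moment generating function
\[
u\mapsto\mbox{ch}_{\Lambda_0}(\rho^\vee/m+u)\big/\mbox{ch}_{\Lambda_0}(\rho^\vee/m),
\]
with $\mbox{ch}_{\Lambda_0}$ made explicit by~(\ref{charA11}) at $\ell=0$, $n=1$. A saddle-point / Poisson summation analysis of this theta-like quotient as $m\to\infty$ should give a mean of $(1,1/2)+o(1)$ and a covariance matrix of the form $\mathrm{diag}(0,m)+o(m)$ in the basis $(\Lambda_0,\alpha_1/2)$ of the quotient space. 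The Lindeberg--Feller invariance principle, applied to the rescaled random walk $(1/m)\Pi^m(m\cdot)$ (with the Lindeberg condition verified via the Gaussian-like tails predicted by the same MGF), then yields (2): convergence in distribution on $C([0,T],\R^2)$ to $B$. Claim (1) follows at once from (2): the limit $B$ has continuous paths, so the modulus of continuity of $(1/m)\Pi^m(m\cdot)$ on a compact time interval vanishes in probability.

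For (3), I would apply Proposition~\ref{fromMtoC} with $n=0$, using $\Pi_+^m(0)=0$ and $\mbox{ch}_0\equiv 1$: for any bounded measurable $f$ on paths,
\[
\E\bigl[f(\Pi_+^m(t):0\le t\le k)\bigr]=\E\bigl[f(\Pi^m(t):0\le t\le k)\,h_m(\Pi^m(k))\,1_{\Pi^m|_{[0,k]}\subset\mathcal C}\bigr],
\]
where $h_m(\mu):=\mbox{ch}_\mu(\rho^\vee/m)\,e^{-\langle\mu,\rho^\vee/m\rangle}$. The key analytic step is then the pointwise asymptotic, uniform on compacts of $C_{\mathrm{aff}}$,
\[
\sqrt{m}\,h_m(\lfloor m(t,y)\rfloor)\longrightarrow c\,\varphi_{1/2}(t,y),\qquad m\to\infty,
\]
for some absolute constant $c$, obtained by inserting~(\ref{charA11}) with $a=1/(2m)$, $b=2/m$ and $\lambda=\lfloor m(t,y)\rfloor$, and matching the resulting theta series term-by-term with~(\ref{defharm}). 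The Verma-module majorization~(\ref{VermaMaj}) supplies the uniform integrability needed to pass to the limit in the tilted expectation. Combining this with (2), the continuity of Brownian paths and the definition of the Doob transform then identifies the limit of $(1/m)\Pi_+^m(\lfloor m\cdot\rfloor)$, in the sense of finite-dimensional distributions, with the $\varphi_{1/2}$-Doob transform of $B$ killed on $\partial C_{\mathrm{aff}}$, which is by definition $A$.

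The hard part will be the character asymptotic in (3): the numerator of~(\ref{charA11}) is a near-cancelling alternating theta series, so extracting the correct $\sqrt{m}^{-1}$ normalization uniformly for $\lambda/m$ in a compact subset of the \emph{open} cone $C_{\mathrm{aff}}$ (and with enough control to resist degeneration as $\lambda/m$ approaches $\partial C_{\mathrm{aff}}$) is where the real work lies. Once this uniform asymptotic is established, the rest of the argument is routine weak-convergence/tilted-measure manipulation.
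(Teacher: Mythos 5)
The paper does not prove Proposition~\ref{conv-conj}: it states explicitly ``The convergences in the following proposition have been proved in \cite{defo4}'' and simply imports them. So your proposal is not ``the same approach as the paper''; you are attempting to reconstruct a proof that the paper deliberately outsources. With that caveat, your outline for (1)--(2) is reasonable in spirit (a triangular-array invariance principle for the i.i.d.\ increments, with moments extracted from the character; and (1) does follow from (2) via the modulus of continuity), although the saddle-point computation would need to be carried out carefully.

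There is, however, a genuine error in your treatment of (3). You claim a pointwise asymptotic
\[
\sqrt{m}\,h_m(\lfloor m(t,y)\rfloor)\longrightarrow c\,\varphi_{1/2}(t,y),
\qquad h_m(\mu)=\mbox{ch}_\mu(\rho^\vee/m)\,e^{-\langle\mu,\rho^\vee/m\rangle},
\]
but this cannot hold: since the character is $\sum_\beta\dim(V(\mu)_\beta)e^{\langle\beta,h\rangle}$ and every weight $\beta\neq\mu$ satisfies $\langle\beta-\mu,\rho^\vee\rangle\le 0$, the quantity $h_m(\mu)$ is always $\ge 1$, so $\sqrt m\,h_m\to\infty$ identically. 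In fact by the Weyl--Kac formula one has
\[
\frac{h_m(\mu)}{\mbox{ch}_{M(0)}(\rho^\vee/m)}
=\sum_{w\in W}\det(w)\,e^{\langle w(\mu+\rho)-(\mu+\rho),\rho^\vee/m\rangle},
\]
and this is what converges, uniformly on compacts of $C_{\mathrm{aff}}$, to (a multiple of) $\varphi_{1/2}$; the correct normalization of $h_m$ is therefore $\mbox{ch}_{M(0)}(\rho^\vee/m)^{-1}$, a quantity that is exponentially small in $m$ (Dedekind-eta type modular asymptotics for the infinite product $\prod_{\alpha\in R_+}(1-e^{-\langle\alpha,\rho^\vee/m\rangle})$), not a power of $m$. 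Consequently the uniform integrability you invoke via~(\ref{VermaMaj}) is not enough to ``pass to the limit in the tilted expectation'': in
\[
\E\bigl[f(\Pi^m)\,h_m(\Pi^m(k))\,1_{\Pi^m|_{[0,k]}\subset\mathcal C}\bigr],
\]
the factor $h_m$ blows up exponentially while $\P(\Pi^m|_{[0,k]}\subset\mathcal C)$ decays exponentially (cf.\ Corollary~\ref{LLPproba}); proving convergence of this product to the Doob $\varphi_{1/2}$-transform, started moreover from the \emph{boundary point} $(0,0)$, is exactly the delicate local-limit/entrance-law analysis that \cite{defo4} (and \cite{boubou-defo}) supplies and that your sketch does not address. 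Fixing the missing exponential factor in the stated asymptotic is necessary but still far from sufficient: the heart of (3) is balancing these two exponentials uniformly enough to identify the limit entrance law, which your argument leaves untouched.
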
 
For $t\ge 0$, we consider the string coordinates  of $B$ on $[0,t]$, denoted by $(\xi_k(t))_{k\ge 0}$. They are defined by 
\begin{align}\label{def-string-mb} 
\mathcal P_{\alpha_{m}}\dots\mathcal P_{\alpha_{0}}B(t)  =B(t)+\sum_{k=0}^m\xi_k(t)\alpha_{k},\quad m\ge 0.
\end{align} 

For every $k\ge 0$, the  function $t\in \R_+ \mapsto \xi_k(t)$ is increasing, and because of the drift, $\lim_{t\to \infty} \xi_k(t)<+\infty$. We set  $\xi_k(\infty)=\lim_{t\to \infty }\xi_k(t)$. 
 For a sequence $x=(x_k)\in \R_+^{\N}$, we set
\begin{align}\label{sigma}\omega(x)=\lim_{n \to +\infty} \sum_{k=0}^{n-1}x_k\alpha_k+\frac{1}{2}x_n\alpha_n \mod\delta,\end{align}
  when this limit exists in $\R\alpha_1$.  
The following sets are the continuous analogs of the Kashiwara crystals  defined in definition \ref{crystals}.

  \begin{definition} \label{ccrystals} One defines,  for $\lambda\in \bar{ C}_{\mbox{aff}} $, 
\begin{align*}&\Gamma(\infty)=\{x=(x_k)\in  \R_+^\N : \frac{x_k}{k}\ge \frac{x_{k+1}}{k+1}\ge 0,  \textrm{ for all } k\ge 1, \,  \omega(x) \in\R^2\},\\
&\Gamma(\lambda)=\{x\in \Gamma(\infty) : x_k\le \langle\lambda-\omega(x)+\sum_{i=0}^kx_i\alpha_i,\alpha^\vee_k\rangle, \textrm{ for every }  k\ge 0\}.
\end{align*}
\end{definition}

\section{An inverse Pitman's theorem} \label{An inverse Pitman's theorem}
We will now prove a reconstruction theorem which allows to get a space-time Brownian motion $B$ from a  conditioned one $A$ and a sequence of random variables properly distributed. The idea is to prove that the commutative diagram in figure \ref{BLP2} is valid. The convergence represented by the third arrow of the diagram  will then provide a reconstruction theorem. Black arrows on the diagram stand for convergences that have been already proved. Dashed ones stand for convergences   which have still to be proved at this stage. Let us first define the random variables involved in the diagram which have not been defined yet. The law of $\xi(\infty)$ is described by 
the following theorem, which has been proved in \cite{boubou-defo}.

 \begin{theorem}[Ph. Bougerol, M. Defosseux \cite{boubou-defo}] \label{loidexi}  The random variables
 $$\xi_0(\infty) ,  \quad \frac{1}{2}((k+1)\xi_{k}(\infty) -k\xi_{k+1}(\infty)), \quad k\ge 1,$$
are independent   exponential random variables with parameter $1$.
\end{theorem}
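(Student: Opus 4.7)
The plan is to take $m\to\infty$ in the discrete analog provided by Proposition~\ref{prop-string}. From $\alpha_0=(0,-2,1)$, $\alpha_1=(0,2,0)$ and $\rho^\vee=2d+\alpha^\vee_1/2$ one computes $\langle\alpha_i,\rho^\vee\rangle=1$ for $i\in\{0,1\}$, so for $a\in B(\infty)$ one has $\langle\omega(a),\rho^\vee\rangle=\sum_{k\ge 0}a_k$ and Proposition~\ref{prop-string} becomes
$$\P(\xi^m(\infty)=a)=\frac{e^{-\sum_k a_k/m}}{\mbox{ch}_{M(0)}(\rho^\vee/m)}.$$
Combined with Proposition~\ref{conv-conj} and enough continuity of the iterated Pitman transforms to propagate weak path convergence to the string coordinates, one expects $\xi^m(\infty)/m\to\xi(\infty)$ in distribution, which reduces the problem to identifying the limiting law.

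The algebraic heart of the argument is a change of variables adapted to $B(\infty)$. Put $\tilde c_0=a_0$ and $\tilde c_k=(k+1)a_k-k a_{k+1}$ for $k\ge 1$: the inequalities defining $B(\infty)$ become exactly $\tilde c_k\ge 0$, and the inverse formula $a_k=k\sum_{j\ge k}\tilde c_j/(j(j+1))$ for $k\ge 1$ combined with swapping the order of summation produces the identity
$$\sum_{k\ge 0}a_k=\tilde c_0+\tfrac12\sum_{k\ge 1}\tilde c_k.$$
After rescaling $x_k=a_k/m$, at the continuum level this says $\sum_{k\ge 0}x_k=\sum_{k\ge 0}\tilde y_k$ where $\tilde y_0=x_0$ and $\tilde y_k=\tfrac12((k+1)x_k-kx_{k+1})$ for $k\ge 1$; since $x\mapsto\tilde y$ is a linear bijection from $\Gamma(\infty)$ onto $\R_+^{\N}$, the limiting density $\propto e^{-\sum\tilde y_k}$ exhibits the $\tilde y_k$'s as i.i.d.\ Exp$(1)$ random variables. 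These are exactly the coordinates appearing in the statement.

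The main obstacle is making this limiting procedure rigorous. In the discrete model the correspondence $a\mapsto\tilde c$ does not identify $B(\infty)$ with $\N^{\N}$ (the integrality of all the $a_k$ imposes nontrivial restrictions on $\tilde c$), so the product structure only emerges asymptotically. The cleanest route is to fix a finite level $N$ and compute the joint Laplace transform of $(\tilde y_0,\dots,\tilde y_N)$ in the rescaled discrete model by reorganizing $\sum_{a\in B(\infty)}e^{-\sum_k(1+t_k)a_k/m}$ using the identity above, then to show via a Riemann-sum argument that this converges as $m\to\infty$ to $\prod_{k=0}^N(1+t_k)^{-1}$, which is the Laplace transform of a product of i.i.d.\ Exp$(1)$'s. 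Combined with the convergence $\xi^m(\infty)/m\to\xi(\infty)$, this transfers the result to the continuous process $B$ and establishes the theorem.
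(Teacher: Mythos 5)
The theorem you are proving is not proved in this paper: it is quoted from \cite{boubou-defo}, and the introduction explicitly notes that the proof there proceeds by approximating the affine chamber by fundamental chambers of dihedral Coxeter groups and invoking the continuous-crystal results of \cite{bbobis}. Your proposal to run the discrete Littelmann model through Proposition~\ref{prop-string} is therefore a genuinely different route; it is, in fact, exactly the kind of argument this paper favours for \emph{other} statements. Your algebraic observation --- that $\tilde c_0=a_0$, $\tilde c_k=(k+1)a_k-ka_{k+1}$ sends the cone inequalities $a_k/k\ge a_{k+1}/(k+1)$ to $\tilde c_k\ge 0$ and that $\sum_k a_k=\tilde c_0+\tfrac12\sum_{k\ge 1}\tilde c_k$ --- is correct and is the structural content of the theorem. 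Indeed Definition~\ref{xi-cond} in this paper is precisely the inverse of your change of variables truncated at level $p$, and the density computed at the start of the proof of Lemma~\ref{conv-xi-cond} is obtained from that inverse by the same Jacobian $\frac{(p+1)!}{2^p}$.

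However two substantive gaps remain, and they are exactly the points the paper is organised to avoid. First, the convergence $\xi^m(\infty)/m\to\xi(\infty)$ in distribution is nowhere established. It does \emph{not} follow from Proposition~\ref{conv-conj} by ``continuity of the iterated Pitman transforms'': the string coordinates at $t=\infty$ depend on the entire trajectory, so the topology of uniform convergence on compacts is not enough; one has to interchange the limits $m\to\infty$ and $t\to\infty$, which requires uniform-in-$m$ control of the tail of the trajectory in the spirit of Proposition~\ref{InfpourPpi}, plus an argument that each $\xi_k^m(\lfloor mt\rfloor)/m$ stabilises once the walk is deep in $\mathcal C$. The paper sidesteps this entirely by only ever passing to the limit in the \emph{conditioned} coordinates $\xi^m_{\cdot,p}(\infty)$ (Lemma~\ref{conv-xi-cond}), where conditioning on $\xi^m_{p+1}(\infty)=0$ forces $a_k=0$ for $k\ge p+1$ and reduces everything to finitely many variables --- and, circularly for your purposes, the target density there is derived from Definition~\ref{xi-cond}, i.e.\ by already assuming the theorem you want to prove. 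Second, your infinite-dimensional Riemann sum needs justification beyond the algebraic identity. The map $a\mapsto\tilde c$ sends $B(\infty)$ to a proper sublattice of $\ell^{(\infty)}(\N)$ (the truncated matrix has determinant $(p+1)!$), so the numerator $\sum_{a\in B(\infty)}e^{-\sum_k(1+t_k)a_k/m}$ does not factorise over $k$, and a direct factorisation would in any case contradict the finiteness of $\mathrm{ch}_{M(0)}(\rho^\vee/m)$. One must argue the finite marginal via a truncation in $p$ with an error estimate uniform in $m$ (cancelling the lattice covolume and the tail against the denominator), and that is precisely the work the conditioned formulation is designed to circumvent. Until those two points are filled in, the proposal is a plausible programme rather than a proof.
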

   \begin{figure}[h] 
     \begin{center}
          \includegraphics[scale=0.8]{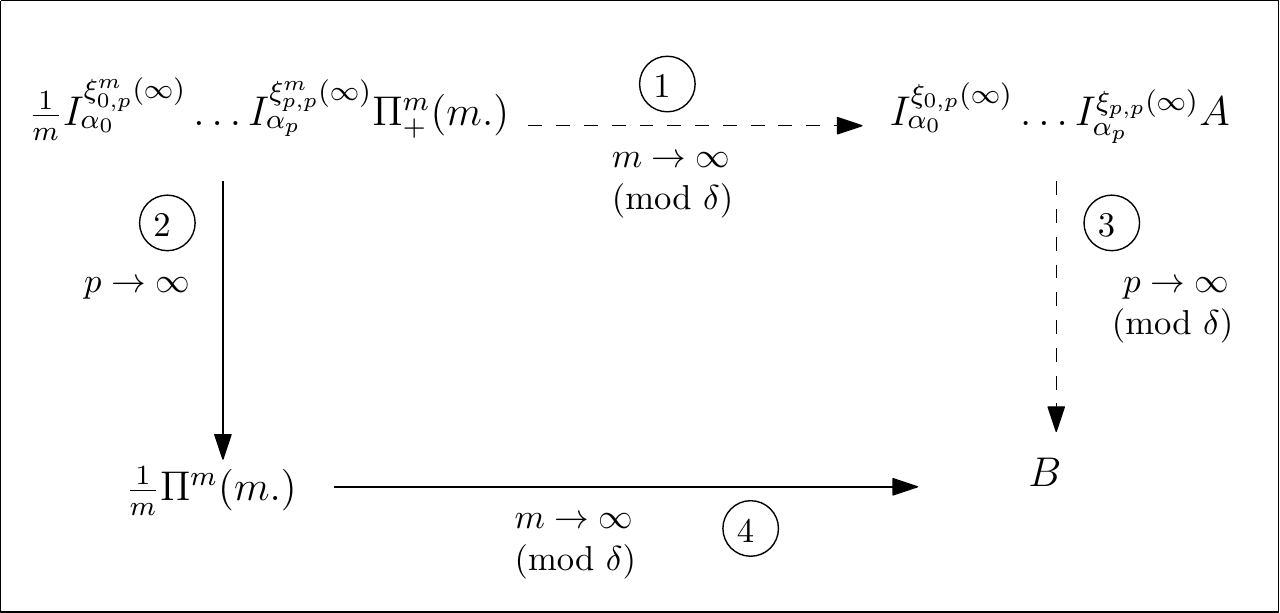}  
          
            \caption{A commutative diagram of  finite dimensional distributions convergences}
            \label{BLP2}
                    \end{center} 
 \end{figure}
 
From now on,  $\varepsilon_n, n \geq0$, is a sequence of independent exponential random variables with parameter $1$ defined by 
 \begin{align}\label{expo}
 \varepsilon_0=\xi_0(\infty) ,  \quad \varepsilon_k= \frac{1}{2}((k+1)\xi_{k}(\infty) -k\xi_{k+1}(\infty)), \quad k\ge 1,
 \end{align}
and  $\{A(t): t\ge 0\}$ is supposed to be independent of this sequence.
\begin{definition} \label{xi-cond} For every $p\ge 0$,   let $\xi_{0,p}(\infty)=\varepsilon_0,$ and let $\xi_{k,p}(\infty)$ be defined by
$$\frac{\xi_{k,p}(\infty)}{k}=\sum_{n=k}^{p} \frac{ 2\varepsilon_n}{n(n+1)},$$
for all $k \in\{1,\dots,p\}$. 
We write $\xi_{\cdot,p}(\infty)=(\xi_{k,p}(\infty))_{k\in \{0,\dots,p\}}$.
\end{definition}

\subsection{Proof of the convergence corresponding to the first arrow of the diagram}
 For every $p\ge 0$, let 
 $$(\xi_{0,p}^{m}(\infty), \dots,\xi_{p,p}^{m}(\infty))$$ be a random vector  independent from $\Pi_+^m$, which is distributed as  $(\xi_0^m(\infty), \dots \xi_p^m(\infty))$ conditionally on  $\xi_{p+1}^m(\infty)=0$.  Lemma \ref{conv-xi-cond} and Propositions \ref{ConvPiPlus} and \ref{InfpourPpi} will imply the  desired convergence. 
 \begin{lemma} \label{conv-xi-cond} For every $p\in \N$, $\frac{1}{m}(\xi_{0,p}^{m}(\infty), \dots,\xi_{p,p}^{m}(\infty))$ converges in distribution  towards $(\xi_{0,p}(\infty),\dots,\xi_{k,p}(\infty))$ when $m$ goes to $+\infty$.  
\end{lemma}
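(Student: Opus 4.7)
My plan is to write down the rescaled conditional law of $(\xi^m_0(\infty),\ldots,\xi^m_p(\infty))$ given $\xi^m_{p+1}(\infty)=0$ explicitly, to recognize the resulting sum as a Riemann sum and pass to the limit, and finally to identify the limit measure with the law of $(\xi_{0,p}(\infty),\ldots,\xi_{p,p}(\infty))$ via a linear change of variables.

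First I would combine Proposition \ref{prop-string} with Corollary \ref{Verm-Dem} to obtain the explicit form of the conditional distribution. Since the coordinates of $\alpha_0$, $\alpha_1$ and $\rho^\vee$ give $\langle \alpha_0,\rho^\vee\rangle=\langle \alpha_1,\rho^\vee\rangle=1$, one has $\langle \omega(a),\rho^\vee\rangle=\sum_{k\ge 0}a_k$, and the slope condition of Definition \ref{crystals} forces $a_k=0$ for every $k\ge p+1$ as soon as $a_{p+1}=0$. Writing $B^{(p)}$ for the set of tuples $(a_0,\ldots,a_p)\in\N^{p+1}$ satisfying $a_k/k\ge a_{k+1}/(k+1)$ for $1\le k\le p-1$ (with no constraint on $a_0$), this yields
\[
\P\bigl((\xi^m_k(\infty))_{0\le k\le p}=(a_k)_{0\le k\le p}\,\bigm|\,\xi^m_{p+1}(\infty)=0\bigr)=\frac{e^{-(a_0+\cdots+a_p)/m}}{\mbox{ch}^{w_p}_{M(0)}(\rho^\vee/m)}
\]
for every $(a_0,\ldots,a_p)\in B^{(p)}$, which by definition is the law of $(\xi^m_{0,p}(\infty),\ldots,\xi^m_{p,p}(\infty))$.

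Next, set $D=\{x\in\R_+^{p+1}:x_k/k\ge x_{k+1}/(k+1),\ 1\le k\le p-1\}$. For any bounded continuous $f:\R^{p+1}\to\R$,
\[
\E\Bigl[f\bigl(\tfrac1m(\xi^m_{0,p}(\infty),\ldots,\xi^m_{p,p}(\infty))\bigr)\Bigr]=\frac{m^{-(p+1)}\sum_{a\in B^{(p)}}f(a/m)\,e^{-\sum_k a_k/m}}{m^{-(p+1)}\,\mbox{ch}^{w_p}_{M(0)}(\rho^\vee/m)}.
\]
Because the defining inequalities of $D$ are scale-invariant, $\tfrac1m B^{(p)}$ is a sub-lattice of $D$ of mesh $1/m$, and numerator and denominator are Riemann sums converging, respectively, to $\int_D f(x)e^{-\sum x_k}\,dx$ and $\int_D e^{-\sum x_k}\,dx$; the exponential weight controls the noncompact tail of $D$ uniformly in $m$, so a standard truncation handles the unboundedness. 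Hence $\tfrac1m(\xi^m_{0,p}(\infty),\ldots,\xi^m_{p,p}(\infty))$ converges in distribution to the probability measure on $D$ with density proportional to $e^{-\sum x_k}$.

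Finally I would introduce the linear bijection $\phi:D\to\R_+^{p+1}$ defined by $\phi_0(x)=x_0$ and $\phi_k(x)=\tfrac12\bigl((k+1)x_k-kx_{k+1}\bigr)$ for $1\le k\le p$, with the convention $x_{p+1}=0$. This map is upper triangular with constant Jacobian determinant $(p+1)!/2^p$, and a short telescoping computation gives $\sum_{k=0}^p x_k=\sum_{k=0}^p \phi_k(x)$ on $D$. Pushing the limit density forward under $\phi$ therefore produces $\prod_{k=0}^p e^{-\varepsilon_k}$ on $\R_+^{p+1}$, i.e.\ independent $\mathrm{Exp}(1)$ coordinates; by Definition \ref{xi-cond} combined with Theorem \ref{loidexi}, this is exactly the law of $\phi(\xi_{0,p}(\infty),\ldots,\xi_{p,p}(\infty))$, and the lemma follows. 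The only delicate technical point is the Riemann-sum convergence on the unbounded domain $D$, but the exponential decay makes this routine; the real content is the explicit form of the conditional law and the change of variables.
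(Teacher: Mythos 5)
Your proof is correct and follows essentially the same route as the paper: express the conditional law of $\frac1m(\xi^m_{0,p}(\infty),\ldots,\xi^m_{p,p}(\infty))$ via Proposition \ref{prop-string} and Corollary \ref{Verm-Dem}, identify a Riemann sum, and pass to the limit. The paper tests convergence with Laplace transforms $\E(e^{-\sum t_k\xi^m_{k,p}/m})$ (which makes the unboundedness of the domain harmless without a truncation argument) and simply states the limit density, whereas you test against bounded continuous functions and supply the change-of-variables computation through $\phi$ that the paper leaves implicit; these are only presentational differences.
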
 
\begin{proof} From definition \ref{xi-cond}, one derives  that  the density  of  $(\xi_{0,p}(\infty), \dots, \xi_{p,p}(\infty))$  is  given by
$$f_{(\xi_{0,p},\dots,\xi_{0,p})} (x_0, \dots, x_p) = \dfrac{(p+1)! e^{- \sum_{k=0}^p x_k}}{2^p} 1_{x_0 \geq 0, \frac{x_1}{1} \geq \frac{x_2}{2} \geq \cdots \geq \frac{x_p}{p} \geq 0}.$$
Moreover, from   Proposition \ref{prop-string} and Corollary \ref{Verm-Dem} we deduce  that for every real numbers $t_0,\dots,t_p\ge 0$,
 
$$\E \left( e^{- \sum_{k=0}^p t_k \frac{\xi_{k,p}^{m}(\infty)}{m}} \right)=  \frac{1}{\mbox{ch}^{w_p}_{M(0)}(\rho^\vee/m)} \sum_{(a_0, \dots, a_p) \in \mathbb{N}^{p+1}} e^{- \sum_{k=0}^p (1+t_k)\frac{a_k}{m}} 1_{ \frac{a_1}{1} \geq \frac{a_2}{2} \geq \cdots \geq \frac{a_p}{p}}.$$
Lemma follows from the fact that
$$m^{-(p+1)}\sum_{(a_0, \dots, a_p) \in \mathbb{N}^{p+1}} e^{- \sum_{k=0}^p (1+t_k)\frac{a_k}{m}} 1_{ \frac{a_1}{1} \geq \frac{a_2}{2} \geq \cdots \geq \frac{a_p}{p}}$$
converges towards the Riemann integral
$$ \int_{\R_+^{p+1}}e^{- \sum_{k=0}^p (1+t_k)x_k} 1_{ \frac{x_1}{1} \geq \frac{x_2}{2} \geq \cdots \geq \frac{x_p}{p}}\, dx.$$ 
\end{proof}

\begin{prop}\label{cvprobaP} For every $t\ge 0$, $\frac{1}{m}(  \Pi_+^m(mt)- \Pi_+^m\lfloor mt \rfloor)$
converges in probabilty to $0$ as $m$ goes to infinity. 
\end{prop}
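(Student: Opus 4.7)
The plan is to use the Markov-type identity of Proposition \ref{fromMtoC} to rewrite the conditional law of an increment of $\Pi_+^m$ on $[n,n+1]$ as a reweighted expectation involving an independent copy of $\Pi^m$. This reduces the question to the already established smallness of single increments of $\Pi^m$ (Proposition \ref{conv-conj}(1)). The case $t=0$ being trivial, fix $t>0$ and set $n=\lfloor mt\rfloor$, $s_m=mt-n\in [0,1)$.

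Applying Proposition \ref{fromMtoC} with $k=1$ and the test function $f(g)=1_{\{|g(n+s_m)-g(n)|/m>\varepsilon\}}$, and writing $\lambda=\Pi_+^m(n)$, one obtains
\begin{align*}
\P\left(\tfrac{1}{m}|\Pi_+^m(mt)-\Pi_+^m(n)|>\varepsilon\right)=\E\left[1_{\{|\Pi^m(s_m)|/m>\varepsilon\}}\,W_\lambda^m(\Pi^m|_{[0,1]})\right],
\end{align*}
where $W_\lambda^m(\gamma)=\frac{\mbox{ch}_{\lambda+\gamma(1)}(\rho^\vee/m)}{\mbox{ch}_\lambda(\rho^\vee/m)}e^{-\langle\gamma(1),\rho^\vee/m\rangle}1_{\{\lambda+\gamma\in\mathcal C\}}$, and $\Pi^m$ on the right is an independent fresh copy. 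Inequality (\ref{VermaMaj}) applied to the weight $\lambda+\gamma(1)$ gives the pointwise bound $W_\lambda^m\le R_m(\lambda):=\mbox{ch}_{M(0)}(\rho^\vee/m)/(e^{-\langle\lambda,\rho^\vee/m\rangle}\mbox{ch}_\lambda(\rho^\vee/m))$. The identity $\mbox{ch}_{\lambda+c\delta}(h)=e^{c\langle\delta,h\rangle}\mbox{ch}_\lambda(h)$ shows that $R_m(\lambda)$ depends on $\lambda$ only modulo $\delta$. Splitting the expectation at the event $\{R_m(\lambda)\le K\}$ yields, for every $K>0$,
\begin{align*}
\P\left(\tfrac{1}{m}|\Pi_+^m(mt)-\Pi_+^m(n)|>\varepsilon\right)\le K\,\P\left(\tfrac{1}{m}|\Pi^m(s_m)|>\varepsilon\right)+\P\bigl(R_m(\Pi_+^m(n))>K\bigr).
\end{align*}

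For any fixed $K$, the first term tends to zero as $m\to\infty$: the fresh $\Pi^m(s_m)$ has the same law as $\Pi^m(mt)-\Pi^m(n)$, whose rescaled version goes to $0$ in probability by Proposition \ref{conv-conj}(1). The main obstacle is the second term, i.e. showing that $(R_m(\Pi_+^m(n)))_m$ is tight. I expect to handle it by working out the Weyl--Kac formula (\ref{charA11}): one should check that for any sequence with $\lambda_m/m$ converging modulo $\delta$ to a point $\tilde\lambda\in C_{\mbox{aff}}$, the product $e^{-\langle\lambda_m,\rho^\vee/m\rangle}\mbox{ch}_{\lambda_m}(\rho^\vee/m)$ converges to $\varphi_{1/2}(\tilde\lambda)$, locally uniformly on compact subsets of $C_{\mbox{aff}}$, via the series expression (\ref{defharm}). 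Combined with the convergence $\Pi_+^m(n)/m\to A(t)$ from Proposition \ref{conv-conj}(3) and the a.s.\ positivity of $\varphi_{1/2}(A(t))$, this yields $R_m(\Pi_+^m(n))\to 1/\varphi_{1/2}(A(t))$ in distribution, hence the desired tightness and the proposition.
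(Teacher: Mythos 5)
Your proposal follows essentially the same route as the paper: condition on $\Pi_+^m\lfloor mt\rfloor$ via Proposition \ref{fromMtoC} with $k=1$, bound the density factor using inequality (\ref{VermaMaj}), control the resulting ratio $R_m(\Pi_+^m\lfloor mt\rfloor)$ by exploiting the convergence of $\frac{1}{m}\Pi_+^m\lfloor mt\rfloor$ to $A(t)\in C_{\mbox{aff}}$, and finish with Proposition \ref{conv-conj}(1). The paper implements the last step by fixing a compact $K\subset C_{\mbox{aff}}$ capturing most of the mass of $A(t)$ and bounding $R_m(\lambda)$ uniformly for $(\lambda+\rho)/m\in K$; your tightness reformulation is equivalent.

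There is, however, a slip in your final step that, taken at face value, would break the argument. You assert that $e^{-\langle\lambda_m,\rho^\vee/m\rangle}\mbox{ch}_{\lambda_m}(\rho^\vee/m)$ converges to $\varphi_{1/2}(\tilde\lambda)$. That quantity in fact diverges: since $\mbox{ch}_{M(0)}(\rho^\vee/m)\to\infty$ as $m\to\infty$, if $e^{-\langle\lambda_m,\rho^\vee/m\rangle}\mbox{ch}_{\lambda_m}(\rho^\vee/m)$ had a finite positive limit you would get $R_m(\lambda_m)\to\infty$, destroying the tightness you need. The correct statement is that the \emph{ratio}
\begin{align*}
R_m(\lambda_m)^{-1}=\frac{e^{-\langle\lambda_m,\rho^\vee/m\rangle}\mbox{ch}_{\lambda_m}(\rho^\vee/m)}{\mbox{ch}_{M(0)}(\rho^\vee/m)}=\sum_{w\in W}\det(w)\,e^{\langle w((\lambda_m+\rho)/m)-(\lambda_m+\rho)/m,\,\rho^\vee\rangle}
\end{align*}
converges, by linearity of the $W$-action, to the continuous positive function $\sum_{w\in W}\det(w)e^{\langle w(\tilde\lambda)-\tilde\lambda,\rho^\vee\rangle}$ of $\tilde\lambda\in C_{\mbox{aff}}$; it is this identity, obtained from the Weyl character formula together with (\ref{VermaProd})/(\ref{VermaSum}), that the paper uses. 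With this correction your argument coincides with the paper's.
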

\begin{proof}
Let us fix $\varepsilon>0$ and $t>0$. We choose a compact $K$ in $C_{\mbox{aff}}$ such that 
$$\P(A(t)\in K)>1-\varepsilon/2.$$ Convergences recalled in proposition \ref{conv-conj} ensure that there exists $m_0\in \N^*$ such that for all $m\ge m_0$ 
$$\P\left(\frac{1}{m}(\Pi_+^m\lfloor mt\rfloor+\rho)\in K\right)>1-\varepsilon.$$
We choose such an integer $m_0$. One has for all $ m \geq m_0$
\begin{align}\label{refer} \E&\left( 1_{\{\frac{1}{m} \vert \langle \Pi_+^m(mt)-\Pi_+^m\lfloor mt \rfloor, \alpha_1^\vee\rangle\vert > \varepsilon\} }\right)\nonumber \\ &\quad \quad\quad\quad\quad\quad \leq\E\left( 1_{\{ \frac{1}{m} \vert \langle \Pi_+^m(mt)-\Pi_+^m\lfloor mt \rfloor, \alpha_1^\vee\rangle\vert >\varepsilon\}\cap K_m}\right) + \varepsilon,
\end{align}
where $K_m=\{ \frac{1}{m}(\Pi_+^m\lfloor mt \rfloor+\rho)\in K \}$.
By proposition \ref{fromMtoC},  one has for $\lambda\in P_+$,
\begin{align*}
 &\E \left( 1_{\{\frac{1}{m}\vert \langle \Pi_+^m(mt)-\Pi_+^m\lfloor mt \rfloor, \alpha_1^\vee\rangle\vert > \varepsilon\}} | \Pi_+^m\lfloor mt \rfloor =\lambda \right) \\
    &= \E \left( 1_{ \{\frac{1}{m} \vert \langle \Pi^m(mt -\lfloor mt \rfloor), \alpha_1^\vee\rangle\vert > \varepsilon\}} \frac{\mbox{ch}_{\Pi^m(1)+\lambda}(\rho^\vee/m)}{\mbox{ch}_{\lambda}(\rho^\vee/m)}e^{-\langle\Pi^m(1),\rho^\vee/m\rangle}1_{\{\lambda+\Pi^m_{\vert [0,1]} \in C_{\mbox{aff}}\}} \right)	 \\
    &\le \E \left( 1_{\{ \frac{1}{m} \vert \langle \Pi^m(mt-\lfloor mt \rfloor), \alpha_1^\vee\rangle\vert > \varepsilon\}} \frac{\mbox{ch}_{M(0)}(\rho^\vee/m)}{\mbox{ch}_{\lambda}(\rho^\vee/m) e^{-\langle\lambda,\rho^\vee/m\rangle} }  \right),
\end{align*} 
the last inequality being derived from (\ref{VermaMaj}).
Moreover the Weyl character formula gives
$$ \frac{\mbox{ch}_{\lambda}(\rho^\vee/m) e^{-\langle\lambda,\rho^\vee/m\rangle} }{\mbox{ch}_{M(0)}(\rho^\vee/m)} = \sum_{w \in W} \det(w) e^{\langle w(\frac{\lambda+ \rho^\vee}{m} ) - (\frac{\lambda+\rho^\vee}{m} ),\rho^\vee\rangle}.$$
The function $$x\in C_{\mbox{aff}}\mapsto  \sum_{w \in W} \det(w) e^{\langle w(x) - x,\rho^\vee\rangle},$$ is   positive on $K$. We set
$$M=\max\{(\sum_{w \in W} \det(w) e^{\langle w(x) - x,\rho^\vee\rangle})^{-1}: x\in K\}.$$
Thus for $\lambda\in P_+$ such that $(\lambda+\rho)/m\in K$ one has,
$$  \E \left( 1_{\{\frac{1}{m} \vert \langle \Pi_+^m(mt)-\Pi_+^m\lfloor mt \rfloor, \alpha_1^\vee\rangle\vert > \varepsilon\}} \vert   \Pi_+^m\lfloor mt \rfloor =\lambda \right) \le M\E  \left( 1_{ \{\frac{1}{m} \vert \langle \Pi^m (mt - \lfloor mt \rfloor) , \alpha_1^\vee\rangle \vert > \varepsilon\}}\right).$$ 
As $ \frac{1}{m} \Pi^m (mt - \lfloor mt \rfloor )$ converges towards $0$ in probability as it is recalled in proposition \ref{conv-conj}, we choose an integer $m_1\ge m_0$ such that for all $m\ge m_1$,
$$\E  \left( 1_{\{ \frac{1}{m} \vert \langle \Pi^m (mt - \lfloor mt \rfloor),\alpha_1^\vee\rangle \vert > \varepsilon\}}\right)\le \varepsilon/M.$$
Finally by conditioning by $\Pi_+^m\lfloor mt\rfloor$ within the expectation of the righthand  side of inequality (\ref{refer}), one obtains   for $m\ge m_1$, 
\begin{align} \E  \left( 1_{\{\frac{1}{m} \vert \langle \Pi_+^m(mt)-\Pi_+^m\lfloor mt \rfloor, \alpha_1^\vee\rangle\vert > \varepsilon\}}\right) \leq2 \varepsilon,
\end{align}
which proves the expected convergence.
\end{proof}
By proposition \ref{fromMtoC}, we prove in the following proposition that the   sequence of random processes $\{\frac{1}{m} \Pi_+^m(mt):t\ge 0\},$ $ m\ge 1,$ inherits the tightness from  $\{\frac{1}{m} \Pi^m(mt):t\ge 0\},$ $ m\ge 1$.
 
\begin{prop}\label{tension} The sequence of processes $\{\frac{1}{m} \Pi_+^m(mt):t\ge 0\},$ $ m\ge 1,$ is tight.
\end{prop}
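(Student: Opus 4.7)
The plan is to verify the standard modulus of continuity criterion on $C([0,T],\R^2)$: for every $T,\eta,\varepsilon>0$, find $\delta>0$ and $m_0$ such that $\P(w_T(Y^m,\delta)>\eta)<\varepsilon$ for $m\ge m_0$, where $Y^m(t):=\tfrac{1}{m}\Pi_+^m(mt)$ and $w_T$ denotes the uniform modulus of continuity on $[0,T]$. The strategy extends the one-point argument of Proposition \ref{cvprobaP} to an entire interval via a partition of $[0,T]$ combined with the Markov comparison of Proposition \ref{fromMtoC}.

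\emph{Step 1 (localization).} Fix $\delta>0$, let $N=\lceil T/\delta\rceil$, $t_j=j\delta$, and $n_j=\lfloor mt_j\rfloor$. By Proposition \ref{conv-conj}(3), $(\tfrac{1}{m}\Pi_+^m(n_j))_{1\le j\le N}$ converges in law to $(A(t_j))_{1\le j\le N}$, which lies almost surely in a compact subset of $C_{\mbox{aff}}$ since $A(t)\in C_{\mbox{aff}}$ for every $t>0$. Pick a compact $K\subset C_{\mbox{aff}}$ and $m_1$ such that $\P(\Omega_K^m)\ge 1-\varepsilon/3$ for $m\ge m_1$, where $\Omega_K^m=\{\tfrac{1}{m}(\Pi_+^m(n_j)+\rho)\in K\text{ for all }1\le j\le N\}$.

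\emph{Step 2 (conditional increment bound).} On $\Omega_K^m$, Proposition \ref{fromMtoC} applied at $n_j$ with $\lambda=\Pi_+^m(n_j)$ and $k_j=n_{j+1}+1-n_j$ yields
$$\P\bigl(\sup_{n_j\le t\le n_{j+1}+1}\|\tfrac{1}{m}(\Pi_+^m(t)-\lambda)\|>\eta\,\big|\,\Pi_+^m(n_j)=\lambda\bigr)\le \E\bigl[\mathbf 1_{\{\sup_{s\le k_j}\|\Pi^m(s)/m\|>\eta\}}\,\Phi\bigr],$$
with weight $\Phi=\tfrac{\mbox{ch}_{\Pi^m(k_j)+\lambda}(\rho^\vee/m)}{\mbox{ch}_\lambda(\rho^\vee/m)}e^{-\langle\Pi^m(k_j),\rho^\vee/m\rangle}\mathbf 1_{\{\lambda+\Pi^m\in\mathcal C\}}$. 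Applying the Verma bound (\ref{VermaMaj}) to the numerator character and the Weyl--Kac rewriting
$$\frac{\mbox{ch}_\lambda(\rho^\vee/m)e^{-\langle\lambda,\rho^\vee/m\rangle}}{\mbox{ch}_{M(0)}(\rho^\vee/m)}=\sum_{w\in W}\det(w)\,e^{\langle w((\lambda+\rho)/m)-(\lambda+\rho)/m,\,\rho^\vee\rangle},$$
exactly as in the proof of Proposition \ref{cvprobaP}, one obtains $\Phi\le M_K:=\sup_{x\in K}(\sum_{w\in W}\det(w)e^{\langle w(x)-x,\rho^\vee\rangle})^{-1}<\infty$ whenever $(\lambda+\rho)/m\in K$. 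The conditional probability above is therefore bounded by $M_K\,q_m(\delta,\eta)$ with $q_m(\delta,\eta):=\P(\sup_{0\le s\le\delta+2/m}\|\Pi^m(ms)/m\|>\eta)$.

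\emph{Step 3 (summation and conclusion).} Summing over $j=1,\ldots,N-1$ on $\Omega_K^m$, and handling the initial strip $[0,t_1]$ via the bound $\sup_{s\le n_1}\|\Pi_+^m(s)\|\le C\sup_{s\le n_1}\|\Pi^m(s)\|$ for the iterated Pitman transform (which vanishes as $t_1\to 0$ by tightness of $\Pi^m(m\cdot)/m$, uniformly in $m$), gives
$$\P(w_T(Y^m,2\delta)>2\eta)\le \varepsilon/3+NM_K q_m(\delta,\eta)+r(t_1),$$
with $r(t_1)\to 0$ as $t_1\to 0$, uniformly in $m$. Since $\mu^m$ has uniformly exponential moments, standard concentration inequalities for random walks yield $q_m(\delta,\eta)\le Ce^{-c\eta^2/\delta}$ uniformly in $m$, and since $N=O(1/\delta)$ the product $NM_K q_m(\delta,\eta)$ tends to $0$ as $\delta\to 0$; choosing $t_1$ first, then $\delta$, makes the right-hand side at most $\varepsilon$. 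The main obstacle is the uniform control of the Radon--Nikodym factor $\Phi$: since $x\mapsto \sum_{w\in W}\det(w)e^{\langle w(x)-x,\rho^\vee\rangle}$ vanishes on $\partial C_{\mbox{aff}}$, the constant $M_K$ blows up as $K$ approaches the walls, and the localization step is what makes the argument viable because the Doob-conditioned limit $A$ stays strictly inside the open chamber for every $t>0$, thereby keeping $\Pi_+^m(n_j)/m$ away from the walls with high probability for $j\ge 1$.
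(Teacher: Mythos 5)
Your overall strategy --- localize so the conditioned walk stays away from the walls and then use the Radon--Nikodym comparison of Proposition \ref{fromMtoC} against the unconditioned walk --- is the right one, but the execution is heavier than needed and has two genuine weak points where it departs from what the paper actually does.

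The first concerns how many times you condition. You condition at every $t_j$ and union-bound, which forces you to beat an $N\approx T/\delta$ factor, and this is why you then need a quantitative decay $q_m(\delta,\eta)\le Ce^{-c\eta^2/\delta}$ uniformly in $m$. That bound is asserted, not proved: $\mu^m$ has unbounded support (the weights of $V(\Lambda_0)$ project mod $\delta$ onto all of $\Z\alpha_1$), and although exponential moments are presumably uniform in $m$, establishing this is extra work. The paper avoids it entirely by conditioning exactly once, at $n=\lfloor mt_0\rfloor$ with $t_0=\eta/2$: Proposition \ref{fromMtoC} compares the law of the \emph{whole trajectory} of $\Pi_+^m$ on $[n,n+k]$ to that of the unconditioned walk, so a single conditioning already yields $\P\bigl(w_T^{t_0}(X^m,\delta)\ge\eta,\ K_m\bigr)\le M\,\P\bigl(w_T(\tfrac1m\Pi^m(m\cdot),\delta)\ge\eta\bigr)$, and the tightness of $\{\tfrac1m\Pi^m(mt)\}$ recalled in Proposition \ref{conv-conj} then finishes the proof with no concentration estimate at all.

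The second concerns the initial strip. You invoke $\sup_{s\le n_1}\|\Pi_+^m(s)\|\le C\sup_{s\le n_1}\|\Pi^m(s)\|$ as a bound ``for the iterated Pitman transform''. No such universal Lipschitz bound holds: each factor $\mathcal P_{\alpha_i}$ can double coroot pairings (since $\langle\alpha_0,\alpha_1^\vee\rangle=\langle\alpha_1,\alpha_0^\vee\rangle=-2$), and the number of non-trivial factors in $\mathcal P=\lim_k\mathcal P_{\alpha_k}\cdots\mathcal P_{\alpha_0}$ is not uniformly bounded over paths. What actually controls $\Pi_+^m$ near $0$ is the chamber constraint itself: $X^m(t)\in C_{\mbox{aff}}$ means $0<\langle X^m(t),\alpha_1^\vee\rangle<t$. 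With $t_0=\eta/2$ and $\delta\le t_0$, any pair $s<t$ with $s<t_0$ and $t-s\le\delta$ has $\langle X^m(s),\alpha_1^\vee\rangle\in(0,\eta/2)$ and $\langle X^m(t),\alpha_1^\vee\rangle\in(0,\eta)$, so the increment is automatically $<\eta$; hence $\{w_T(X^m,\delta)\ge\eta\}\subset\{w_T^{t_0}(X^m,\delta)\ge\eta\}$ and the strip near $0$ is handled deterministically, with no tightness or transform bound needed. This is the mechanism the paper uses, and it is what your ``$r(t_1)\to 0$'' step should have said.
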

\begin{proof}
For $t\ge 0$, we set $X^m (t) = \frac{1}{m} \Pi_+^m(mt)$.  As it has been recalled in proposition \ref{conv-conj}, it has been proved in \cite{defo4} that  $\frac{1}{m}\Pi_+^m\lfloor mt \rfloor$ converges in law when $m$ goes to  infinity. From proposition \ref{cvprobaP}, we deduce     the convergence in law of $X^m (t)$ for any $t\ge 0$. Thus it is sufficient to prove  that
$$\forall T \geq 0, \forall \varepsilon >0, \forall \eta >0, \exists \delta >0 \ \text{s.t.} \ \limsup_{m \to +\infty} \P \left( w_T (X^m, \delta) \geq \eta \right) \leq \varepsilon,$$
where, for    $x: \R_+\to \mathfrak h_\R^*$, $$w_T (x,h) = \sup\{ \vert \langle x(t)-x(s),\alpha_1^\vee\rangle\vert, s,t \in [0,T], \vert s-t\vert \leq \delta\}.$$
Let $T,\varepsilon,\eta>0$. We suppose that $T$ is greater than $\eta$. We set $t_0 = \frac{\eta}{2} $ and define $w_T ^{t_0} (x, \delta) $ by
 $$w_T ^{t_0} (x, \delta) = \sup\{ \vert \langle x(t)-x(s),\alpha_1^\vee\rangle\vert, s,t \in [ t_0 , T], |s-t| \leq \delta\},$$
 for  $\delta\ge 0$, $x: \R_+\to \mathfrak h_\R^*$,
 As  for every $t\ge 0$, $X^m(t)$ is   in    $C_{\mbox{aff}}$, one has for  $ \delta\le t_0$,
$$ \left\{ w_T (X^m , \delta) \geq \eta \right\} \subset \left\{ w_T ^{t_0} (X^m, \delta) \geq \eta \right\}.$$ 
As in the proof of proposition \ref{cvprobaP} we choose  a compact $K$ in $C_{\mbox{aff}}$ and $m_0 \in \mathbb{N}$  such as for all $ m \geq m_0$ 
$$\P \left( \frac{\Pi_+^m\lfloor mt_0 \rfloor+\rho}{m} \in K \right) \geq 1 - \varepsilon.$$
Hence,
\begin{align*}
 \mathbb{P}\left(  w_T ^{t_0} (X^m, \delta) \geq \eta  \right) &\leq \E\left(1_{\{w_T ^{t_0} (X^m, \delta) \geq \eta, \frac{ \Pi_+^m\lfloor mt_0 \rfloor+\rho}{m} \in K \}} \right)+ \varepsilon.
\end{align*}
By conditioning by $ \Pi_+^m\lfloor mt_0\rfloor$ in the expectation of the righthand side of the above inequality, we obtain as in the proof of the proposition \ref{cvprobaP} that   
$$\P\left(w_T ^{t_0} (X^m, \delta) \geq \eta, \frac{\Pi_+^m\lfloor mt_0 \rfloor+\rho}{m} \in K  \right)\le M\P\left(w_T  (\frac{1}{m}\Pi^m(m.), \delta) \geq \eta\right),$$
where $$M=\max\{(\sum_{w \in W} \det(w) e^{(w(x) - x),\rho^\vee})^{-1}: x\in K\}.$$ As the sequence of processes $\{\frac{1}{m}\Pi^m(mt), t\ge 0\}$, $m\ge 0$, is tight, we choose $m_1\ge m_0$ and $\delta_0 \in(0, \eta/2]$ such that for $m\ge m_1$,
$$\P\left(w_T  (\frac{1}{m}\Pi^m(m.), \delta_0) \geq \eta\right)\le \varepsilon/M.$$
Thus  for $m\ge m_1$, one has  
\begin{align*}
 \mathbb{P}\left(  w_T ^{t_0} (X^m, \delta_0) \geq \eta  \right) &\leq  2\varepsilon,  
\end{align*}
which ends the proof.
\end{proof}
The convergence recalled in proposition \ref{conv-conj} of $\{\frac{1}{m}\Pi_+^m\lfloor mt\rfloor:t\ge 0\}$ in the sense of finite dimensional law  and the previous proposition give the following one.
\begin{prop}\label{ConvPiPlus}  The sequence of processes $\{\frac{1}{m}  \Pi_+^m(mt):t\ge 0\},$ $ m\ge 1,$ converges in distribution towards $\{A(t): t\ge 0\}$ in the quotient space $\mathfrak h^*_\R \mod \delta$.
\end{prop}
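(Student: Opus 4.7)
The plan is to combine the three preceding results. Proposition \ref{conv-conj}(3) provides finite-dimensional convergence of the integer-time skeleton $\{\frac{1}{m}\Pi_+^m\lfloor mt\rfloor\}$ to $\{A(t)\}$; Proposition \ref{cvprobaP} controls the gap between $\frac{1}{m}\Pi_+^m(mt)$ and its skeleton at each fixed time; and Proposition \ref{tension} supplies tightness of the full sequence in the path space. Together these are precisely the ingredients for a standard Slutsky plus Prokhorov argument.

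First I would upgrade the finite-dimensional convergence from integer times to arbitrary times. Fix $0\le t_1<\cdots<t_n$ and write
\[
\frac{1}{m}\Pi_+^m(mt_i)=\frac{1}{m}\Pi_+^m\lfloor mt_i\rfloor+\frac{1}{m}\bigl(\Pi_+^m(mt_i)-\Pi_+^m\lfloor mt_i\rfloor\bigr).
\]
By Proposition \ref{conv-conj}(3), the vector $\bigl(\frac{1}{m}\Pi_+^m\lfloor mt_i\rfloor\bigr)_{i=1}^n$ converges in distribution to $(A(t_i))_{i=1}^n$, while Proposition \ref{cvprobaP} applied at each $t_i$ ensures that the vector of corrections converges to $0$ in probability (pointwise convergence in probability to a constant automatically upgrades to joint convergence in probability to the constant vector). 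Slutsky's lemma then yields joint convergence in law of $\bigl(\frac{1}{m}\Pi_+^m(mt_i)\bigr)_{i=1}^n$ to $(A(t_i))_{i=1}^n$, i.e.\ convergence in the sense of finite-dimensional distributions.

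Finally, combining this finite-dimensional convergence with the tightness of Proposition \ref{tension}, Prokhorov's theorem forces convergence in distribution of $\{\frac{1}{m}\Pi_+^m(mt): t\ge 0\}$ to $\{A(t): t\ge 0\}$ in $C(\R_+,\R^2)$ equipped with the topology of uniform convergence on compact sets: any weakly convergent subsequence has a limit whose finite-dimensional marginals coincide with those of $A$, and since the law on $C(\R_+,\R^2)$ is determined by its finite-dimensional marginals, the whole sequence must converge. I do not anticipate any serious obstacle at this stage: all of the conceptual work has already been absorbed into Propositions \ref{cvprobaP} and \ref{tension}, which use Proposition \ref{fromMtoC} to transfer properties from the unconditioned random walk to the conditioned one. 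The only point worth mentioning is that everything is read in the quotient space $\mathfrak{h}^*_\R\mod\delta\simeq \R^2$, which is already built into the formulation of the earlier propositions, so no additional care is required here.
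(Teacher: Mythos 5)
Your proof is correct and matches the paper's intended argument, which simply cites the finite-dimensional convergence of the skeleton $\{\frac{1}{m}\Pi_+^m\lfloor mt\rfloor\}$ (Proposition \ref{conv-conj}) together with the tightness of Proposition \ref{tension} and leaves the Slutsky/Prokhorov bookkeeping implicit. Your explicit appeal to Proposition \ref{cvprobaP} to pass from integer times to arbitrary times is a natural way of filling in that step, and otherwise you use exactly the same ingredients.
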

Now it remains to control the asymptotic behavior of $\Pi^m_+$ for large time  uniformly in $m$ in order to get the convergence represented by the first arrow in the diagram. For this we show the following proposition.
\begin{prop}\label{InfpourPpi} For all $\varepsilon,a >0$ there exists $T,m_0\ge 0$ such as for $i\in \{0,1\}$ and all $m\ge m_0$ $$\min \left( \P\left(\inf_{t\ge T}\frac{1}{m}\langle \Pi^m(mt),\alpha_i^\vee\rangle \ge a\right),\, \, \P\left(\inf_{t\ge T}\frac{1}{m}\langle\Pi_+^m(mt),\alpha_i^\vee\rangle \ge a\right) \right) \ge 1-\varepsilon.$$
\end{prop}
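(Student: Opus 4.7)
The strategy is to prove the estimate first for the unconditioned walk $\Pi^m$ and then transfer it to $\Pi_+^m$ via Proposition \ref{fromMtoC} combined with the Verma bound (\ref{VermaMaj}) and the Weyl character formula, exactly as in the proof of Proposition \ref{cvprobaP}.

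For $\Pi^m$, write $\langle\Pi^m(k),\alpha_i^\vee\rangle = c^m_i\,k+M_k$, where $c^m_i:=\E\langle\eta_1^m(1),\alpha_i^\vee\rangle$ and $M$ is a centered $L^2$-martingale with i.i.d.\ increments of variance $\sigma_m^2$.  Compatibility with Proposition \ref{conv-conj}(2) and the drift $1/2$ and diffusion coefficient $1$ of $\langle B(t),\alpha_i^\vee\rangle$ forces $c^m_i\to 1/2$ and $\sigma_m^2/m\to 1$; hence $c^m_i\geq 1/4$ and $\sigma_m^2\leq Cm$ for some constant $C$ and all $m\geq m_0$.  Given $\varepsilon,a>0$, fix $T\geq 8a$ and decompose $[mT,\infty)$ dyadically as $\bigcup_{j\geq 0}[2^jmT,2^{j+1}mT]$.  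On the $j$-th block, the event $\{\langle\Pi^m(k),\alpha_i^\vee\rangle<ma\}$ forces $M_k\leq -2^{j-3}mT$, so Kolmogorov's inequality applied to $-M$ on $[0,2^{j+1}mT]$ yields
\[
\P\bigl(\exists k\in[2^jmT,2^{j+1}mT]:\langle\Pi^m(k),\alpha_i^\vee\rangle<ma\bigr)\leq\frac{2^{j+1}mT\,\sigma_m^2}{(2^{j-3}mT)^2}\leq \frac{2^{7-j}C}{T}.
\]
Summing over $j\geq 0$ (and absorbing the $O(1)$ continuous-time correction, coming from real versus integer times) gives the first estimate of the proposition.

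For the conditioned walk $\Pi_+^m$, pick $t_0\in(0,T/2)$ large.  Since $A(t)/t\to(1,1/2)$ in probability as $t\to\infty$ (checked directly from the explicit harmonic function $\varphi_{1/2}$), Proposition \ref{ConvPiPlus} allows us to choose a compact $K\subset C_{\mbox{aff}}$ with $\P\bigl((\Pi_+^m\lfloor mt_0\rfloor+\rho)/m\in K\bigr)\geq 1-\varepsilon/2$ and $\min(x,s-x)\geq La$ on $K$, for a large constant $L$ to be chosen.  Conditioning on $\Pi_+^m(\lfloor mt_0\rfloor)=\lambda$ with $(\lambda+\rho)/m\in K$ and applying Proposition \ref{fromMtoC} together with (\ref{VermaMaj}) and the Weyl character formula exactly as in Proposition \ref{cvprobaP}, one obtains for every $k\in\N$
\[
\P\bigl(\inf_{n\leq s\leq n+k}\langle\Pi_+^m(s),\alpha_i^\vee\rangle<ma\bigm|\Pi_+^m(n)=\lambda\bigr)\leq M\cdot\P\bigl(\inf_{0\leq s\leq k}\langle\lambda+\Pi^m(s),\alpha_i^\vee\rangle<ma\bigr)
\]
with $n=\lfloor mt_0\rfloor$ and $M=M(K)$ independent of $k$ and $\lambda$; the point is that the Verma bound dominates the character ratio uniformly in $k$, so the estimate survives $k\to\infty$.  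Using $\langle\lambda,\alpha_i^\vee\rangle\geq (L-O(1))ma$, the right-hand side is at most $M\cdot\P(\inf_{s\geq 0}\langle\Pi^m(s),\alpha_i^\vee\rangle<-(L-1)ma)$.  The Wald exponential martingale $e^{-\theta^*\langle\Pi^m(\cdot),\alpha_i^\vee\rangle}$, where $\theta^*$ is chosen so that $\E[e^{-\theta^*\langle\eta_1^m(1),\alpha_i^\vee\rangle}]=1$ (giving $\theta^*=2c^m_i/\sigma_m^2+o(1/m)\sim 1/m$), together with Doob's inequality, bounds this by $e^{-\theta^*(L-1)ma}\leq e^{-(L-1)a/2}$, which is $<\varepsilon/(2M)$ for $L$ large enough.

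The main obstacle is the uniform-in-$m$ exponential bound on $\P(\inf_{s\geq 0}\langle\Pi^m(s),\alpha_i^\vee\rangle<-y)$: the polynomial Kolmogorov bound of Step 1 is not tight enough, since the limiting Brownian motion $\langle B(t),\alpha_i^\vee\rangle$ has an exponential-type infimum distribution.  One must therefore invoke the Wald martingale with $\theta^*\sim 1/m$, and verifying this scaling requires expanding the character ratio $\mbox{ch}_{\Lambda_0}((\rho^\vee-s\alpha_i^\vee)/m)/\mbox{ch}_{\Lambda_0}(\rho^\vee/m)$ to second order in $s$ together with the uniform convergence of its low-order derivatives.
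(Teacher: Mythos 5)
Your overall strategy matches the paper's structure — prove the unconditioned estimate, then transfer to $\Pi_+^m$ by conditioning at a fixed time on a compact $K\subset C_{\mbox{aff}}$ and invoking Proposition \ref{fromMtoC}, the Verma bound (\ref{VermaMaj}) and the Weyl character formula. For the unconditioned part the paper only cites Proposition 6.13 of \cite{defo4}; your Kolmogorov-plus-dyadic argument is a plausible substitute, but it rests on the drift lower bound $c_i^m\geq 1/4$ and the variance bound $\sigma_m^2\leq Cm$, which you deduce merely from ``compatibility'' with the distributional convergence in Proposition \ref{conv-conj}. Convergence in distribution does not give convergence of moments; a direct computation on $\mu^m$ (differentiating $\log\mbox{ch}_{\Lambda_0}(\rho^\vee/m)$) is needed to make this rigorous.

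The more substantive issue is in the conditioned part, where you take a genuinely harder road than necessary. After the Verma/Weyl transfer one has, conditionally on $\Pi_+^m(n)=\lambda$, an upper bound $M\cdot\P(\inf_{0\le s\le k}\langle\lambda+\Pi^m(s),\alpha_i^\vee\rangle<ma)$, and the paper simply observes that $\langle\lambda,\alpha_i^\vee\rangle\geq0$, drops the shift, and thereby reduces the whole claim to the already-proved unconditioned estimate on the time range $t\geq T-1$. This requires no exponential bound, no large-deviation mechanism, and no lower bound on $\langle\lambda,\alpha_i^\vee\rangle$, so the compact $K$ may be chosen at a fixed time (say $t=1$), independently of $a$ and $T$. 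Your ``main obstacle'' — a uniform-in-$m$ exponential tail for $\P(\inf_{s\geq0}\langle\Pi^m(s),\alpha_i^\vee\rangle<-y)$ — therefore never arises; you created it by shifting the infimum back to $s=0$ and trying to recover via $\langle\lambda,\alpha_i^\vee\rangle\geq Lma$. Beyond being unnecessary, your Wald-martingale route has its own gaps: the existence and asymptotics $\theta^*\sim 1/m$ of the Wald parameter (which you acknowledge would require a second-order expansion of the character ratio), and a latent circularity in the choice of constants — $L$ must satisfy $e^{-(L-1)a/2}<\varepsilon/(2M)$ while $M=M(K)$ depends on $K$, which depends on $t_0$, which you take large so that $\min(x,s-x)\geq La$ holds on $K$; you would at least need to argue that $M$ stays bounded (e.g. $M\to1$ as $K$ recedes into the interior of $C_{\mbox{aff}}$), which is not stated. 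Replacing the whole second half by the paper's ``drop $\lambda$ and reuse the unconditioned estimate on $t\geq T-1$'' removes all of these difficulties at once.
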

\begin{proof}  Slight modifications in the proof of proposition 6.13 of \cite{defo4} give the first inequality. Let $i\in \{0,1\}$. 
As previously we choose  a compact $K$ in $  C_{\mbox{aff}}$  and $m_0 \in \mathbb{N}^*$   such as for all $  m \geq m_0$
$$\P \left( \frac{ \Pi_+^m(m)+\rho}{m} \in K \right) \geq 1 - \varepsilon .$$ 
Let $T \geq 1$    that will be chosen later. For $u > T$ and $  m \geq m_0$ one has
\begin{align*}
 &\E(1_{\{\inf\{\frac{1}{m}\langle \Pi_+^m(mt),\alpha_i^\vee\rangle,  T \leq t \leq u\} \le a\}})  \leq \E \left(1_{ \{\inf\{\frac{1}{m}\langle \Pi_+^m(t),\alpha_i^\vee\rangle, \lfloor mT \rfloor \leq t \leq \lfloor mu \rfloor + 1\} \le a\}\cap   K_m} \right) + \varepsilon,
\end{align*}
where $K_m=\{ \frac{1}{m}(\Pi_+^m\lfloor mt \rfloor+\rho)\in K \}$.
 By conditioning by $ \Pi_+^m\lfloor m\rfloor$ in the expectation of the righthand side of the above inequality, we obtain as in the proof of   Proposition \ref{cvprobaP} that there exists $M\ge 0$ such that 
\begin{align*}
 \E \left(1_{\{ \inf\{\frac{1}{m}\langle \Pi_+^m(t),\alpha_i^\vee\rangle, \lfloor mT \rfloor \leq t \leq \lfloor mu \rfloor + 1\} \le a\}\cap K_m} \right)  \leq  M \E \left( 1_{\{\inf\{\frac{1}{m}\langle \Pi^m(t),\alpha_i^\vee\rangle, \lfloor mT \rfloor - m \leq t\} \le a \}}\right).
\end{align*}
Thanks to the first inequality, for such an $M\ge 0$, we choose  $T_0 \geq 0$ and $m_1 \ge m_0$ such as for   $m \geq m_1$
$$\P \left( \inf_{ \lfloor mT_0\rfloor-m \leq t}\frac{1}{m}\langle \Pi^m(t),\alpha_i^\vee\rangle \le a \right) \leq \varepsilon/M.$$
Thus for $u\ge T_0$, $m \geq m_1$
$$\P\left(\inf_{T_0\le t\le u}\frac{1}{m}\langle  \Pi_+^m(mt),\alpha_i^\vee\rangle \le a\right)\le 2\varepsilon.$$
As $m_1$ does not depend on $u$, we let $u$ goes to infinity in the above inequality, which ends the proof.

\end{proof}

We can now state the convergence corresponding to the first arrow of the diagram of figure \ref{BLP2}.
\begin{prop}  In the quotient space $\mathfrak h^*_\R\mod \delta$, the sequence of random processes $$\{\frac{1}{m}I_{\alpha_0}^{\xi_{0,p}^{m}(\infty)}\dots I_{\alpha_p}^{\xi_{p,p}^{m}(\infty)}\Pi_+^{m}(mt):t\ge 0\}, \quad m\ge 1,$$
converges in  distribution towards
$$\{I_{\alpha_0}^{\xi_{0,p}(\infty)}\dots I_{\alpha_p}^{\xi_{p,p}(\infty)}A(t), t\ge 0\},$$
as $m$ goes to infinity.
\end{prop}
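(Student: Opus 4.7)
My plan is to recognize the process of interest as a continuous functional of a jointly convergent pair and to conclude via Skorokhod representation and the continuous mapping theorem. The first ingredient I will use is the scaling homogeneity
$$\tfrac{1}{m}I^{x}_{\alpha_i}f(m\,\cdot)=I^{x/m}_{\alpha_i}\!\bigl(\tfrac{1}{m}f(m\,\cdot)\bigr),$$
immediate from the definition of $I^{x}_{\alpha_i}$, which rewrites the process under study as $I^{\xi^m_{0,p}(\infty)/m}_{\alpha_0}\cdots I^{\xi^m_{p,p}(\infty)/m}_{\alpha_p}\!\bigl(\tfrac{1}{m}\Pi^m_+(m\,\cdot)\bigr)$. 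By Proposition~\ref{prop-string}, $\xi^m_{\cdot,p}(\infty)$ is independent of $\Pi^m_+$, and by construction the limit $\xi_{\cdot,p}(\infty)$ is independent of $A$; Lemma~\ref{conv-xi-cond} and Proposition~\ref{ConvPiPlus} then yield the joint convergence
$$\Bigl(\tfrac{1}{m}\xi^m_{\cdot,p}(\infty),\,\tfrac{1}{m}\Pi^m_+(m\,\cdot)\Bigr)\;\Longrightarrow\;\bigl(\xi_{\cdot,p}(\infty),A\bigr)$$
in $\R_+^{p+1}\times C(\R_+,\R^2)$, which I will upgrade to almost sure convergence via Skorokhod's theorem on a common probability space.

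The core of the proof will then be to show that the functional
$$\Phi_p:\;(y,f)\longmapsto I^{y_0}_{\alpha_0}\cdots I^{y_p}_{\alpha_p}f$$
is almost surely continuous at $(\xi_{\cdot,p}(\infty),A)$ for the topology of uniform convergence on compact sets. The subtle point is that each $I^{y}_{\alpha_i}$ involves the tail infimum $\inf_{s\ge t}\langle f(s),\alpha_i^\vee\rangle$, which is not continuous for this topology in general. I will resolve this by localization: if I can find $T^\star<\infty$ and $a\ge \max_k y_k$ such that $\inf_{s\ge T^\star}\langle g(s),\alpha_i^\vee\rangle\ge a$ for $i\in\{0,1\}$ and for $g$ running over $f$ together with all its intermediate transforms, then for $t\le T^\star$ the infimum appearing in each $I^{y_k}_{\alpha_k}$ can be replaced by one over the compact interval $[t,T^\star]$, which depends continuously on $(y,f)$. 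Since each $I^{y_k}_{\alpha_k}$ only subtracts a nonnegative multiple of $\alpha_{k}$ bounded by $y_k$, the tail condition is preserved along the chain of transforms, at the cost of replacing $a$ by $a-cM$ for a constant $c$ depending only on the root geometry and $M=\max_k y_k$.

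The main obstacle will be establishing this tail control uniformly in $m$, and it is precisely the purpose of Proposition~\ref{InfpourPpi}, which provides, for any $a>0$ and $\varepsilon>0$, a threshold $T$ and an index $m_0$ with
$$\P\!\Bigl(\inf_{s\ge T}\tfrac{1}{m}\langle\Pi^m_+(ms),\alpha_i^\vee\rangle\ge a\Bigr)\ge 1-\varepsilon,\qquad m\ge m_0,\ i\in\{0,1\}.$$
The tightness of the scaled strings given by Lemma~\ref{conv-xi-cond} allows me to choose a common upper bound $M$ for their coordinates with probability at least $1-\varepsilon$; picking $a$ accordingly, and transferring the tail estimate to the limit $A$ via the portmanteau theorem applied to the countable union of closed sets $\bigcup_N\{f:\inf_{T\le s\le N}\langle f(s),\alpha_i^\vee\rangle\le a\}$, secures the localization hypothesis for both $A$ and the prelimits simultaneously, on an event of probability at least $1-O(\varepsilon)$. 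The continuous mapping theorem on the Skorokhod space then yields the desired convergence on this event; letting $\varepsilon\to 0$ concludes.
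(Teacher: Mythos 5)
Your argument is correct and follows the route the paper itself indicates: the authors state this proposition without an explicit proof but announce just before Lemma~\ref{conv-xi-cond} that it will follow from Lemma~\ref{conv-xi-cond}, Proposition~\ref{ConvPiPlus} and Proposition~\ref{InfpourPpi}, which are exactly the ingredients you combine (joint convergence of the scaled strings and the scaled process, plus the uniform-in-$m$ tail control needed to make the tail infima in $I^{y}_{\alpha_i}$ behave continuously). Your Skorokhod--plus--localization implementation is a legitimate way to fill in what the paper leaves implicit; the only slip is cosmetic, namely that $\bigcup_N\{f:\inf_{T\le s\le N}\langle f(s),\alpha_i^\vee\rangle\le a\}$ is not closed, and the cleaner portmanteau argument applies to the open set $\{f:\inf_{s\ge T}\langle f(s),\alpha_i^\vee\rangle< a\}$.
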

 
\subsection{Proof of the convergence corresponding to the third  arrow of the diagram}  Let us  first notice  that  Proposition \ref{prop-string} implies the following one.
\begin{prop} \label{partial-rec} For $m,p\ge 1$, the process 
$ \{I_{\alpha_0}^{\xi_{0,p}^{m}(\infty)}\dots I_{\alpha_p}^{\xi_{p,p}^{m}(\infty)}  \Pi_+^{m}(t): t\ge 0\}$
has the same law as $\{\Pi^m(t):t\ge 0\}$ conditionally on $\{\xi_{p+1}^m(\infty)=0\}$.
\end{prop}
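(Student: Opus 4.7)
The plan is to combine a pathwise Littelmann reconstruction of $\Pi^m$ from $\Pi^m_+$ and its string coordinates with the independence between $\xi^m(\infty)$ and $\Pi^m_+$ provided by Proposition \ref{prop-string}.

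The first step is a deterministic pathwise identity on the event $E_p := \{\xi^m_{p+1}(\infty) = 0\}$. Since $\xi^m(\infty) \in B(\infty)$ and the defining inequality $a_k/k \geq a_{k+1}/(k+1)$ forces $a_k = 0$ for every $k \geq p+1$ as soon as $a_{p+1} = 0$, all string coordinates beyond index $p$ vanish on $E_p$. The reconstruction formula recalled in Section \ref{Pitman transforms and Littelmann modules} states that on any interval $[0,T]$ with $T \in \N$ and every integer $p$ with $\xi^m_k(T) = 0$ for $k > p$,
\[
\Pi^m(s) = I^{\xi^m_0(T), T}_{\alpha_0} \dots I^{\xi^m_p(T), T}_{\alpha_p} \Pi^m_+(s), \quad s \in [0, T].
\]
On $E_p$ the monotonicity $\xi^m_k(T) \uparrow \xi^m_k(\infty)$ ensures that this identity applies with the same index $p$ for every $T$. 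Letting $T \to \infty$ and using that $\langle \Pi^m_+(t), \alpha_i^\vee \rangle \to +\infty$, so that the nested infima defining the $I^{x, T}_{\alpha_i}$ functionals stabilise at a finite time and therefore agree with $I^{x}_{\alpha_i}$ taken on $[0, \infty)$, one obtains on $E_p$ the pathwise identity
\[
\Pi^m(s) = I^{\xi^m_0(\infty)}_{\alpha_0} \dots I^{\xi^m_p(\infty)}_{\alpha_p} \Pi^m_+(s), \quad s \geq 0.
\]

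The second step is to condition and invoke independence. By Proposition \ref{prop-string}, $\xi^m(\infty)$ is independent of the entire process $\Pi^m_+$, hence conditioning on $E_p$, which only involves $\xi^m(\infty)$, preserves the independence between $(\xi^m_0(\infty), \dots, \xi^m_p(\infty))$ and $\Pi^m_+$. By Definition \ref{xi-cond} the vector $(\xi^m_{0,p}(\infty), \dots, \xi^m_{p,p}(\infty))$ is, by construction, independent of $\Pi^m_+$ and distributed as $(\xi^m_0(\infty), \dots, \xi^m_p(\infty))$ conditioned on $E_p$. Substituting into the identity of the previous step, the conditional law of $\Pi^m$ given $E_p$ equals the unconditional law of $I^{\xi^m_{0,p}(\infty)}_{\alpha_0} \dots I^{\xi^m_{p,p}(\infty)}_{\alpha_p} \Pi^m_+$, which is the claim.

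The main obstacle is the justification of the passage $T \to \infty$ in the Littelmann reconstruction: while the finite-$T$ version is an immediate consequence of the injectivity of the string-coordinate map $\mathfrak{a}$, the infinite-time version requires checking that the nested infima in the $I^{x, T}_{\alpha_i}$ stabilise as $T$ grows. This reduces to showing that each intermediate path appearing in the composition $I^{\cdot}_{\alpha_0} \dots I^{\cdot}_{\alpha_p} \Pi^m_+$ has $\langle \cdot , \alpha_i^\vee \rangle$ tending to $+\infty$, a property inherited step by step from the behaviour of $\Pi^m_+$ recorded in Lemma \ref{LGNC}.
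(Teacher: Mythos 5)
Your proof is correct and amounts to the elaboration the paper has in mind when it asserts that Proposition \ref{prop-string} implies the result: pathwise Littelmann reconstruction of $\Pi^m$ from $\Pi^m_+$ together with its string coordinates, combined with the independence of $\xi^m(\infty)$ and $\Pi^m_+$. One slip in the stabilisation step is worth flagging: Lemma \ref{LGNC} only asserts $\langle\Pi^m_+(t),\alpha_i^\vee\rangle\to+\infty$ \emph{in probability}, whereas the passage $T\to\infty$ requires \emph{almost sure} divergence of the intermediate paths. This is available, but the route is through Lemma \ref{LGNM}: that lemma gives $\langle\Pi^m(t),\alpha_i^\vee\rangle\to+\infty$ a.s.; on $E_p$ one has $\Pi^m_+=\mathcal P_{\alpha_p}\cdots\mathcal P_{\alpha_0}\Pi^m$, so $\Pi^m_+-\Pi^m$ remains bounded and the a.s.\ divergence transfers to $\Pi^m_+$ on $E_p$; and since the event $\{\langle\Pi^m_+(t),\alpha_i^\vee\rangle\to+\infty\}$ is $\Pi^m_+$-measurable, hence independent of $E_p$ which has positive probability, the a.s.\ divergence holds unconditionally. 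With this substitution your argument closes cleanly.
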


\begin{prop} For $u\in \R$, and $p\in \N$,
\begin{align}\label{first}
\E\left(e^{iu\langle\Pi^m\lfloor mt \rfloor/m,\alpha_1^\vee\rangle}1_{\xi_{p+1}^m \lfloor mt \rfloor=0}\right)=\E \left( \frac{\mbox{ch}^{w_p}_{ \Pi_+^m \lfloor mt \rfloor}(\frac{1}{m}(iu\alpha^\vee_1+\rho^\vee))}{\mbox{ch}_{ \Pi_+^m \lfloor mt \rfloor}(\frac{1}{m}\rho^\vee)} \right).
\end{align}
In particular,
\begin{align}\label{second}
\P\left(\xi_{p+1}^m \lfloor mt \rfloor=0\right)=\E \left( \frac{\mbox{ch}^{w_p}_{ \Pi_+^m \lfloor mt \rfloor}(\frac{1}{m}\rho^\vee)}{\mbox{ch}_{ \Pi_+^m \lfloor mt \rfloor}(\frac{1}{m}\rho^\vee)} \right).
\end{align}
\end{prop}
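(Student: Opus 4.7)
The plan is to condition on the trajectory of $\Pi_+^m$ up to time $n := \lfloor mt \rfloor$ and then apply Proposition \ref{preprop-string} with an appropriately chosen complex-valued test function. The key observation, which unlocks everything, is that $\Pi^m(n)$ can be recovered from $\Pi_+^m(n)$ and the string coordinates $\xi^m(n)$. Indeed, by the definition (\ref{def-string}) and the fact that $\xi^m(n)$ has only finitely many nonzero components, the iterated Pitman composition stabilizes after finitely many steps, giving
$$\Pi_+^m(n) = \mathcal{P}\Pi^m(n) = \Pi^m(n) + \omega(\xi^m(n)),$$
so $\Pi^m(n) = \Pi_+^m(n) - \omega(\xi^m(n))$.

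Substituting this into the left-hand side of (\ref{first}), one obtains
$$\E\left( e^{iu\langle \Pi_+^m(n), \alpha_1^\vee\rangle/m}\, e^{-iu\langle \omega(\xi^m(n)), \alpha_1^\vee\rangle/m}\, 1_{\xi_{p+1}^m(n)=0} \right).$$
Conditioning on $\{\Pi_+^m(s):s\le n\}$ and applying Proposition \ref{preprop-string} with $f(a) = e^{-iu\langle \omega(a), \alpha_1^\vee\rangle/m}\,1_{a_{p+1}=0}$, the denominator is $\mathrm{ch}_{\Pi_+^m(n)}(\rho^\vee/m)$ by (\ref{char-string}), while the numerator equals
$$\sum_{a\in B(\Pi_+^m(n)),\, a_{p+1}=0} e^{\langle \Pi_+^m(n),\, \rho^\vee/m\rangle}\, e^{-\langle \omega(a),\, (iu\alpha_1^\vee+\rho^\vee)/m\rangle}.$$

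Multiplying the conditional expectation by the prefactor $e^{iu\langle \Pi_+^m(n), \alpha_1^\vee\rangle/m}$ and regrouping the exponents, the factor $e^{\langle \Pi_+^m(n),\,(iu\alpha_1^\vee+\rho^\vee)/m\rangle}$ can be moved into the sum. By the Demazure character formula in string coordinates (\ref{CDString}), one then recognizes the numerator as $\mathrm{ch}^{w_p}_{\Pi_+^m(n)}\!\bigl(\tfrac{1}{m}(iu\alpha_1^\vee+\rho^\vee)\bigr)$. Taking outer expectation yields (\ref{first}), and (\ref{second}) follows by specializing to $u=0$.

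There is no real obstacle here: once the identity $\Pi^m(n)=\Pi_+^m(n)-\omega(\xi^m(n))$ is in hand and one allows Proposition \ref{preprop-string} to be applied to a bounded complex-valued $f$ (which is legitimate by linearity, separating real and imaginary parts), the computation is a direct match between the resulting weighted sum over $B(\Pi_+^m(n))\cap\{a_{p+1}=0\}$ and the definition of the Verma--Demazure/Demazure character evaluated at $\tfrac{1}{m}(iu\alpha_1^\vee+\rho^\vee)$.
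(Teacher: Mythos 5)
Your proof is correct and takes essentially the same route as the paper's: condition on $\Pi_+^m$ at time $\lfloor mt\rfloor$, apply Proposition \ref{preprop-string} to the test function $f(a)=e^{-iu\langle\omega(a),\alpha_1^\vee\rangle/m}1_{a_{p+1}=0}$, and recognize the resulting weighted sum as the Demazure character via (\ref{CDString}). The only difference is that you state explicitly the identity $\Pi^m(n)=\Pi_+^m(n)-\omega(\xi^m(n))$, which the paper uses implicitly when it writes the conditional expectation with $\lambda-\omega(a)$ in the exponent.
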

\begin{proof} First notice that  Identity (\ref{second}) follows by letting $u=0$ in (\ref{first}). To prove (\ref{first}), we notice that proposition \ref{preprop-string} implies that 
\begin{align*}
\E(e^{iu\langle\Pi^m\lfloor mt \rfloor/m,\alpha_1^\vee\rangle}1_{\xi_{p+1}^m \lfloor mt \rfloor=0}\vert   \Pi_+^m\lfloor mt\rfloor=\lambda)\end{align*} is equal to $$\frac{\sum_{a\in B(\lambda)}e^{iu\langle(\lambda-\omega(a))/m,\alpha_1^\vee\rangle}e^{\langle\lambda-\omega(a),\rho^\vee/m\rangle}1_{a_{p+1}=0}}{\mbox{ch}_\lambda(\rho^\vee/m)}$$ which is by (\ref{CDString}) equal to $$ \frac{\mbox{ch}_\lambda^{w_p}((iu\alpha_1^\vee+\rho^\vee)/m)}{\mbox{ch}_\lambda(\rho^\vee/m)}. $$
Thus (\ref{first})   follows by conditioning by $ \Pi_+^m\lfloor mt\rfloor$ within the lefthand side expectation of the identity.
 
\end{proof}
 The idea of the proof of the third convergence of the diagram rests on the fact that 
$$\E\left(e^{iu\langle\Pi^m\lfloor mt\rfloor/m,\alpha_1^\vee\rangle}\vert \xi_{p+1}^m\lfloor mt\rfloor=0\right)$$ 
for which an explicit formula involving a Demazure character   is available as we have just seen, is 
close to 
$$\E\left(e^{iu\langle\Pi^m\lfloor mt\rfloor/m,\alpha_1^\vee\rangle}\vert \xi_{p+1}^m(\infty)=0\right)$$
whose limit we are looking for. 

\begin{definition} Let  $(L_p)_{p\ge 0}$ be  the random sequence defined by 
$$L_p=\sum_{k=0}^p \xi_{k,p}(\infty)\alpha_k, \quad p\ge 0.$$
\end{definition} 
 Lemma \ref{conv-xi-cond} implies in particular that for any $p\ge 0$, the random variable $\frac{1}{m}\omega(\xi^m_{.,p}(\infty))$ converges in distribution towards $L_p$ when $m$ goes to infinity. Notice that viewed in $\mathfrak h_\R^*\mod \delta $,  $(L_p)_{p\ge 0}$  is a sequence of real numbers. 
\begin{lemma} \label{LimDesLp} In $\mathfrak h_\R^*\mod \delta $,  $L_p$ converges almost surely and in $L^2$  towards 
$$L =\sum_{k=0}^\infty \frac{\varepsilon_k}{2 \lfloor k/2 \rfloor +1}\alpha_k \mod \delta,$$
when $p$ goes to infinity.
\end{lemma}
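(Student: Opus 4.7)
My plan is to reduce the statement to a scalar identity in $\alpha_1$ and then invoke standard convergence results for series of independent random variables. First, since $\alpha_0\equiv -\alpha_1\pmod\delta$, one has $\alpha_k\equiv (-1)^{k+1}\alpha_1$ for all $k\ge 0$, so in the quotient both $L_p$ and $L$ reduce to scalar multiples of $\alpha_1$. Substituting the definition of $\xi_{k,p}(\infty)$ from Definition \ref{xi-cond} into the expression of $L_p$ and swapping the order of summation produces the scalar coefficient
$$-\varepsilon_0+\sum_{n=1}^p\frac{2\varepsilon_n}{n(n+1)}\sum_{k=1}^n(-1)^{k+1}k.$$
The elementary identities $\sum_{k=1}^n(-1)^{k+1}k=(-1)^{n+1}\lceil n/2\rceil$ and $2\lceil n/2\rceil/(n(n+1))=1/(2\lfloor n/2\rfloor+1)$ then rewrite this coefficient as
$$\sum_{n=0}^p\frac{(-1)^{n+1}\varepsilon_n}{2\lfloor n/2\rfloor+1},$$
which is exactly the $p$-th partial sum of the series defining $L$ in the quotient.

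Once this identification is made, the problem reduces to showing that the series $\sum_{n\ge 0}Y_n$ with $Y_n=(-1)^{n+1}\varepsilon_n/(2\lfloor n/2\rfloor+1)$ converges almost surely and in $L^2$, the $Y_n$ being independent. Write $Y_n=\E[Y_n]+\tilde Y_n$. Consecutive expectations cancel pairwise, $\E[Y_{2m}]+\E[Y_{2m+1}]=-\frac{1}{2m+1}+\frac{1}{2m+1}=0$, so $\sum_n\E[Y_n]$ is a convergent alternating series whose tail beyond index $p$ is of order $O(1/p)$. The centered parts $\tilde Y_n$ are independent with $\mathrm{Var}(\tilde Y_n)=(2\lfloor n/2\rfloor+1)^{-2}=O(1/n^2)$, which is summable; Kolmogorov's theorem then gives almost sure convergence of $\sum_n\tilde Y_n$, while summability of variances directly yields $L^2$ convergence with tail variance $O(1/p)$.

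Combining the two steps, $L_p\to L$ both almost surely and in $L^2$ in the quotient $\mathfrak h_\R^*\mod\delta$, with the quantitative bound $\E[(L-L_p)^2]=O(1/p)$. The only potentially delicate step is the opening combinatorial manipulation identifying $L_p$ with a partial sum of the series defining $L$; once that identification is in hand, the convergence is immediate from classical theorems on series of independent variables.
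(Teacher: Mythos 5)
Your proof is correct and, on the combinatorial side, is essentially the same as the paper's. You substitute the definition of $\xi_{k,p}(\infty)$, interchange the order of summation, and use $\sum_{k=1}^n(-1)^{k+1}k=(-1)^{n+1}\lceil n/2\rceil$ to reduce $L_p$ in the quotient to the partial sum $\sum_{n=0}^p(-1)^{n+1}\varepsilon_n/(2\lfloor n/2\rfloor+1)\,\alpha_1$; this is exactly the identity $L_p=\sum_{k=0}^p\frac{\varepsilon_k}{2\lfloor k/2\rfloor+1}\alpha_k\mod\delta$ that the paper derives. You diverge on the convergence step: the paper asserts that $(L_p)_p$ is an $L^2$-bounded martingale and invokes martingale convergence, whereas you split $L_p$ into its deterministic mean, a convergent alternating series with pairwise cancellation between indices $2m$ and $2m+1$, plus a centered part with summable variances, and apply Kolmogorov's theorem. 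Your version is actually a little more careful: the increments $(-1)^{k+1}\varepsilon_k/(2\lfloor k/2\rfloor+1)$ do not have zero mean, so $(L_p)$ itself is not literally a martingale; only the centered partial sums are, and the mean part must be handled separately as you do. Your decomposition is precisely what is needed to make the paper's one-line conclusion rigorous, and it yields the same result with the extra quantitative tail bound $\E[(L-L_p)^2]=O(1/p)$.
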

\begin{proof}
One has for $p\ge 0$, 
$$L_p =\varepsilon_0\alpha_0+ \sum_{k=1}^p k \sum_{n=k}^p \frac{2 \varepsilon_n}{n(n+1)} \alpha_k = \varepsilon_0\alpha_0+\sum_{n=1}^p \frac{2 \varepsilon_n}{n(n+1)} \sum_{k=1}^n k \alpha_k.$$
Thus
\begin{align*}\langle L_p, \alpha^\vee_1\rangle &= -2+\sum_{n=1}^p \frac{4 \varepsilon_n}{n(n+1)} \sum_{k=1}^n k (-1)^{k+1}\\
&=  -2+\sum_{n=1}^p \frac{4 \varepsilon_n}{n(n+1)} (-1)^{n+1}\lfloor\frac{n+1}{2}\rfloor.
\end{align*}
 Finally $$L_p = \sum_{k=0}^p \frac{\varepsilon_k}{2 \lfloor k/2 \rfloor +1}\alpha_k \mod \delta,$$
which shows   that in $\mathfrak h_\R^*\mod \delta $, $(L_p)_{p\ge 0}$ is a bounded martingale  in $L^2$, and gives the expected convergence. 
\end{proof}

\begin{lemma}\label{convD} If $(\lambda_m)$ is a sequence with values in $P_+$ such that $\lim_{m\to\infty}\frac{\lambda_m}{m}=\lambda\in   C_{\mbox{aff}}$ then  for $u\in \R$,
$$\lim_{m\to \infty}\frac{\mbox{ch}_{\lambda_m}^{w_p}((iu\alpha^\vee_1+\rho^\vee)/m)}{\mbox{ch}^{w_p}_{M(0)}(\rho^\vee/m)}=e^{\langle \lambda,\rho^\vee\rangle}\E\left(e^{iu\langle \lambda-L_p,\alpha^\vee_1\rangle}1_{\xi_{\cdot,p}\in \Gamma(\lambda)}\right)$$
\end{lemma}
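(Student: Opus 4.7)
The plan is to expand both Demazure characters via the string-coordinate formulas (\ref{CDString}) and (\ref{VDString}), factor out a boundary term from the numerator, and recognize the remaining ratio as a ratio of Riemann sums converging to a ratio of Lebesgue integrals that is precisely the claimed expectation.

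First I observe that any $a=(a_k)\in B(\infty)$ with $a_{p+1}=0$ must have $a_k=0$ for every $k\ge p+1$, because the string order $a_{k+1}/(k+1)\le a_k/k$ propagates the zero. Let $S_p:=\{(a_0,\ldots,a_p)\in\N^{p+1}: a_1/1\ge a_2/2\ge\dots\ge a_p/p\ge 0\}$. A direct computation using $\alpha_0=\delta-\alpha_1$ together with $\langle\delta,\rho^\vee\rangle=2$ shows that $\langle\alpha_k,\rho^\vee\rangle=1$ for every $k\ge 0$. Factoring $e^{\langle\lambda_m,(iu\alpha_1^\vee+\rho^\vee)/m\rangle}$ out of the numerator gives
$$\frac{\mbox{ch}^{w_p}_{\lambda_m}((iu\alpha_1^\vee+\rho^\vee)/m)}{\mbox{ch}^{w_p}_{M(0)}(\rho^\vee/m)}=e^{\langle\lambda_m,(iu\alpha_1^\vee+\rho^\vee)/m\rangle}\cdot \frac{N_m}{D_m},$$
where $N_m=\sum_{a\in S_p\cap B(\lambda_m)} e^{-\sum_k a_k/m}\, e^{-iu\langle\omega(a),\alpha_1^\vee\rangle/m}$ and $D_m=\sum_{a\in S_p}e^{-\sum_k a_k/m}$.

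Next, I view $N_m/m^{p+1}$ and $D_m/m^{p+1}$ as Riemann sums on $\R_+^{p+1}$ with mesh $1/m$. Under the scaling $y=a/m$, the constraint $a\in B(\lambda_m)$ becomes $y_q\le\langle\lambda_m/m-\sum_{k>q}y_k\alpha_k,\alpha_q^\vee\rangle$ for $0\le q\le p$, which, since $\lambda_m/m\to\lambda$, converges pointwise to the defining condition of $\Gamma(\lambda)$; the constraints for $q>p$ are automatic since $\langle\lambda,\alpha_q^\vee\rangle>0$ when $\lambda\in C_{\mbox{aff}}$. Since the integrand is dominated by the integrable function $e^{-\sum y_k}$ on $S_p^{\R}:=\{y\in\R_+^{p+1}: y_1/1\ge\dots\ge y_p/p\}$, and since the boundary of $\Gamma(\lambda)$ inside $S_p^{\R}$ has Lebesgue measure zero (because $\lambda$ lies in the open set $C_{\mbox{aff}}$), dominated convergence yields
$$\frac{N_m}{m^{p+1}}\longrightarrow \int_{S_p^{\R}\cap\Gamma(\lambda)} e^{-\sum y_k}\,e^{-iu\langle\omega(y),\alpha_1^\vee\rangle}\,dy,\qquad \frac{D_m}{m^{p+1}}\longrightarrow \int_{S_p^{\R}}e^{-\sum y_k}\,dy=\frac{2^p}{(p+1)!},$$
the last equality being Lemma \ref{conv-xi-cond}, where the density $\tfrac{(p+1)!}{2^p}e^{-\sum x_k}1_{S_p^{\R}}(x)$ integrates to $1$.

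The same density formula identifies the ratio of the two integrals with $\E\big[e^{-iu\langle L_p,\alpha_1^\vee\rangle}1_{\xi_{\cdot,p}(\infty)\in\Gamma(\lambda)}\big]$, since $L_p=\omega(\xi_{\cdot,p}(\infty))$ by Definition \ref{xi-cond}. The prefactor converges to $e^{\langle\lambda,\rho^\vee\rangle+iu\langle\lambda,\alpha_1^\vee\rangle}$, and absorbing $e^{iu\langle\lambda,\alpha_1^\vee\rangle}$ into the expectation gives the stated right-hand side. The main technical point is the dominated-convergence step: checking the a.e.\ convergence of the Riemann-sum integrand together with an integrable dominant. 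The openness of $C_{\mbox{aff}}$ is essential here, as it prevents the constraint set from degenerating to the boundary of $\Gamma(\lambda)$ and guarantees that $1_{B(\lambda_m)}(a)\to 1_{\Gamma(\lambda)}(y)$ away from a Lebesgue-null set.
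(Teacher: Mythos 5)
Your proof is correct and follows essentially the same route as the paper: expand the Demazure characters via the string-coordinate formulas (\ref{CDString}) and (\ref{VDString}), recognize the normalized sums as Riemann sums converging to integrals over $\Gamma(\lambda)$ and $\Gamma(\infty)$ restricted to the first $p+1$ coordinates, and identify the resulting ratio with the stated expectation using the explicit density of $\xi_{\cdot,p}(\infty)$ from Lemma \ref{conv-xi-cond}. The extra detail you supply --- the identity $\langle\alpha_k,\rho^\vee\rangle=1$, the factorization of the boundary exponential $e^{\langle\lambda_m,(iu\alpha_1^\vee+\rho^\vee)/m\rangle}$, and the dominated-convergence justification of the Riemann-sum limits via the measure-zero boundary of $\Gamma(\lambda)$ --- is precisely what the paper compresses into its final sentence.
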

\begin{proof} Expressions (\ref{CDString}) and (\ref{VDString}) for Demazure characters give 
$$\frac{1}{m^{p+1}}\mbox{ch}_{\lambda_m}^{w_p}((iu\alpha_1^\vee+\rho^\vee)/m)=\frac{1}{m^{p+1}}\sum_{a\in B(\lambda_m), a_{p+1}=0}e^{\langle\frac{1}{m}(\lambda_m-\omega(a)),iu\alpha_1^\vee+\rho^\vee\rangle},$$  and 
$$\frac{1}{m^{p+1}}\mbox{ch}^{w_p}_{M(0)}(\rho^\vee/m)=\frac{1}{m^{p+1}}\sum_{a\in B(\infty),  a_{p+1}=0}e^{ -\langle\frac{1}{m}\omega(a), \rho^\vee\rangle}. $$   
Thus 
\begin{align*}
\lim_{m\to \infty}\frac{\mbox{ch}_{\lambda_m}^{w_p}((iu\alpha^\vee_1+\rho^\vee)/m)}{\mbox{ch}^{w_p}_{M(0)}(\rho^\vee/m)}
&=\frac{\int_{\R_+^{p+1}}e^{\langle\lambda-\omega(x), iu\alpha_1^\vee+\rho^\vee\rangle}1_{x\in \Gamma(\lambda)}\, dx}{\int_{\R_+^{p+1}}e^{-\langle \omega(x), \rho^\vee\rangle}1_{x\in \Gamma(\infty)}\, dx}\\
&=e^{\langle\lambda,  \rho^\vee\rangle}\frac{\int_{\R_+^{p+1}}e^{iu\langle\lambda-\omega(x), \alpha_1^\vee\rangle}e^{-\langle \omega(x),  \rho^\vee\rangle}1_{x\in \Gamma(\lambda)}\, dx}{\int_{\R_+^{p+1}} e^{-\langle \omega(x),\rho^\vee\rangle}1_{x\in \Gamma(\infty)}\, dx}.
\end{align*}
The observation of the density of $(\xi_{0,p}(\infty),\dots,\xi_{p,p}(\infty))$ given in the proof of Lemma \ref{conv-xi-cond} allows to conclude.
\end{proof}

\begin{lemma} \label{conv-cryst} For  $\lambda\in   C_{\mbox{aff}}$,  the random variable $1_{\{\xi_{\cdot,p}(\infty)\in \Gamma(\lambda)\}}$ converges almost surely towards $1_{\{\xi(\infty)\in \Gamma(\lambda)\}}$ when $p$ goes to infinity.
\end{lemma}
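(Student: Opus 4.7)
For every $x\in\Gamma(\infty)$, membership $x\in\Gamma(\lambda)$ amounts to the countable family of inequalities
\begin{align*}
A_k(x):= x_k + \langle\omega(x)-\sum_{i=0}^k x_i\alpha_i,\alpha_k^\vee\rangle \le \langle\lambda,\alpha_k^\vee\rangle, \qquad k\ge 0.
\end{align*}
Set $A^{(p)}_k := A_k(\xi_{\cdot,p}(\infty))$, extending $\xi_{\cdot,p}(\infty)$ by zeros past index $p$, and $A^{(\infty)}_k := A_k(\xi(\infty))$. For $k>p$ one has $A^{(p)}_k = 0 \le \langle\lambda,\alpha_k^\vee\rangle$ automatically since $\lambda\in \bar C_{\mbox{aff}}$, so only the inequalities with $k\le p$ need to be compared. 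The plan is to establish the almost sure uniform estimate
\begin{align*}
\sup_{0\le k\le p}|A^{(p)}_k - A^{(\infty)}_k|\xrightarrow[p\to\infty]{a.s.}0,
\end{align*}
and then to conclude by a standard strict-versus-non-strict dichotomy.

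Expanding the definition,
\begin{align*}
A^{(p)}_k - A^{(\infty)}_k &= \bigl(\xi_{k,p}(\infty)-\xi_k(\infty)\bigr) + \langle L_p - L,\alpha_k^\vee\rangle \\
&\quad - \sum_{i=0}^k\bigl(\xi_{i,p}(\infty)-\xi_i(\infty)\bigr)\langle\alpha_i,\alpha_k^\vee\rangle,
\end{align*}
with $L_p = \omega(\xi_{\cdot,p}(\infty))$ and $L=\lim L_p$ as in Lemma \ref{LimDesLp}. A subsidiary point to verify is that $\omega(\xi(\infty)) = L$ in the quotient $\mathfrak h^*_\R\mod\delta$ (so that $A^{(\infty)}_k$ is well-defined and the expansion is legitimate); this follows by applying to the midpoint-corrected partial sums defining $\omega(\xi(\infty))$ the same Abel interchange used in the proof of Lemma \ref{LimDesLp}, the midpoint correction being exactly what tames the oscillation caused by $\xi_n(\infty)\to 2$. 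Inverting the identities (\ref{expo}) and using $\xi_k(\infty)\to 2$ (hence $\xi_k(\infty)/k\to 0$) yields $\xi_k(\infty) = k\sum_{n\ge k}2\varepsilon_n/(n(n+1))$ for $k\ge 1$, whence $\xi_{i,p}(\infty)-\xi_i(\infty) = -iT_p$ for $i\ge 1$, with $T_p = \sum_{n>p}2\varepsilon_n/(n(n+1))$ independent of $i$. Combined with $\langle\alpha_i,\alpha_k^\vee\rangle = 2(-1)^{i+k}$, the elementary identity $\sum_{i=1}^k i(-1)^i = (-1)^k\lfloor(k+1)/2\rfloor$, and the direct computation $\langle L_p - L,\alpha_k^\vee\rangle = -2(-1)^k R_p$ with $R_p = \sum_{n>p}(-1)^n\varepsilon_n/(2\lfloor n/2\rfloor+1)$, the three contributions collapse to the strikingly $k$-independent form
\begin{align*}
A^{(p)}_k - A^{(\infty)}_k = 1_{k\,\mbox{odd}}\,T_p - 2(-1)^k R_p,
\end{align*}
bounded in absolute value by $T_p+2|R_p|$ uniformly in $k$.

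It remains to show $T_p\to 0$ and $R_p\to 0$ almost surely. The first is immediate from the absolute convergence of $\sum_n 2\varepsilon_n/(n(n+1))$ (finite mean $\sum_n 2/(n(n+1))=2$). For $R_p$, writing $\varepsilon_n = 1 + (\varepsilon_n - 1)$ splits the series into a deterministic part $\sum_n(-1)^n/(2\lfloor n/2\rfloor+1)$, which converges to zero by pairing consecutive equal denominators, and a centered part to which Kolmogorov's one-series theorem applies since $\sum_n(2\lfloor n/2\rfloor+1)^{-2}<\infty$; both have vanishing tails almost surely, giving $R_p\to 0$.

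With uniform convergence in hand, the indicator convergence is routine. Each $A^{(\infty)}_k$ has a continuous distribution as a nondegenerate linear combination of independent exponentials, so $\{A^{(\infty)}_k = \langle\lambda,\alpha_k^\vee\rangle\ \mbox{for some}\ k\}$ is a countable union of null sets and hence has probability zero. On the complementary full-measure event: if $\xi(\infty)\in\Gamma(\lambda)$, all inequalities are strict and the uniform estimate forces $A^{(p)}_k<\langle\lambda,\alpha_k^\vee\rangle$ for every $k\le p$ once $p$ is large, so $\xi_{\cdot,p}(\infty)\in\Gamma(\lambda)$; conversely, if $\xi(\infty)\notin\Gamma(\lambda)$ then some fixed $k_0$ violates its inequality strictly, and pointwise convergence at $k=k_0$ shows $\xi_{\cdot,p}(\infty)\notin\Gamma(\lambda)$ for all large $p$. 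The main technical obstacle is securing the algebraic collapse in the displayed formula: once the three $k$-dependent pieces cancel to leave a remainder controlled uniformly in $k\le p$ by the tails $T_p$ and $R_p$, the rest of the proof is formal.
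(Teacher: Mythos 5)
Your proof is correct and follows the same underlying strategy as the paper's: establish an almost-sure uniform (in $k$) estimate for the defect between the membership criterion evaluated at $\xi_{\cdot,p}(\infty)$ and at $\xi(\infty)$, then conclude via the almost-sure absence of boundary equalities. Your quantity $A_k(x)$ is exactly $\langle X_k,\alpha_k^\vee\rangle$ for the paper's $X_k$, so the decomposition is identical; what you add is an explicit computation of the cancellation (leaving $1_{k\,\mathrm{odd}}T_p-2(-1)^kR_p$) and a direct density argument replacing the paper's citation of Proposition~5.14 of \cite{boubou-defo}, both of which are sound. One small point worth making explicit is why the uniform error eventually beats every gap $\langle\lambda,\alpha_k^\vee\rangle-A_k^{(\infty)}$: since $A_k^{(\infty)}=\langle X_k,\alpha_k^\vee\rangle\to 0$ and $\langle\lambda,\alpha_k^\vee\rangle$ takes the two positive values $\langle\lambda,\alpha_0^\vee\rangle,\langle\lambda,\alpha_1^\vee\rangle$, the gaps are bounded below on the full-measure event, which is what allows the uniform estimate to close the argument.
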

\begin{proof} We know that almost surely in the quotient space $\mathfrak h_\R^*\mod \delta$ 
 $$\lim_{k\to\infty}\sum_{i=0}^{k-1}\xi_i(\infty)\alpha_i+\frac{1}{2}\xi_k(\infty)\alpha_k=L  ,\textrm{ and } \lim_{p\to \infty}L_p=L.$$ 
 As for every integer $k$, almost surely $\lim_{p\to\infty}\xi_{k,p}(\infty)=\xi_{k}(\infty)$, one obtains a first   inclusion
 \begin{align*}\limsup_{p\to \infty}\{\xi_{.,p}(\infty)\in \Gamma(\lambda)\}\subset  \{\xi(\infty)\in \Gamma(\lambda)\}.\end{align*}
  We set for $k\in \{1,\dots,p\}$
$$X_{k,p}=L_p-\sum_{i=0}^{k-1}\xi_{i,p}(\infty) \alpha_i-\frac{1}{2}\xi_{k,p}(\infty)\alpha_{k} $$ and $$X_k=L-\sum_{i=0}^{k-1}\xi_i (\infty)\alpha_i-\frac{1}{2}\xi_{k}(\infty)\alpha_{k} .$$
We notice that, in the quotient space, one has for   $k\in\{1,\dots, p\}$
\begin{align*} 
\sum_{i=0}^{k-1}\xi_{i,p} (\infty)\alpha_i+\frac{1}{2}\xi_{k,p}(\infty)\alpha_{k}=\sum_{i=0}^{k-1}\xi_i \alpha_i+\frac{1}{2}\xi_{k}(\infty)\alpha_{k}-\frac{\xi_{p}(\infty)}{2p}\alpha_k1_{k \textrm{ is odd}}.
\end{align*}
Thus almost surely 
$$\lim_{p\to \infty} \sup_{0\le k\le p} \vert \alpha_k^\vee(X_k^p-X_k)\vert=0.$$
It follows, as in the proof of proposition 5.14 of \cite{boubou-defo},    that almost surely
\begin{align*}   \{\xi(\infty)\in \Gamma(\lambda)\}\subset \liminf_{p\to\infty} \{\xi_{.,p}(\infty)\in \Gamma(\lambda)\}.\end{align*}
Finally one has,
\begin{align*}\limsup_{p\to \infty}\{\xi_{.,p}(\infty)\in \Gamma(\lambda)\}\subset  \{\xi(\infty)\in \Gamma(\lambda)\}\subset \liminf_{p\to\infty} \{\xi_{.,p}(\infty)\in \Gamma(\lambda)\},\end{align*}
from which the lemma follows. 
\end{proof} 
 
The function $\varphi_{iu+\frac{1}{2}}$ is defined on $\R_+^*\times \R$ by 
\begin{align}\label{defphi}\varphi_{iu+1/2}(t,x)=
\frac{e^{-(iu +1/2)x}}{\cosh(u)}
\sum_{k\in \Z}{ \sinh((iu+1/2)(2kt+x))}e^{-2(kx+k^2t)}, 
\end{align}   
$ t>0, x\in \R.$ 
 
\begin{prop} \label{string-cond} Let $\lambda\in C_{\mbox{aff}}$, $u\in \R$. One has 
$$\mathbb E(e^{-i  u\langle  L,\alpha_1^\vee\rangle}\vert \xi(\infty)\in \Gamma(\lambda))= \frac{\varphi_{iu+1/2}(\lambda)}{\varphi_{1/2}(\lambda)},\, \textrm{ and } \quad\P(\xi(\infty)\in \Gamma(\lambda))=2\varphi_{\frac{1}{2}}(\lambda).$$
\end{prop}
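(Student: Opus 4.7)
The strategy is to pass to the limit in Lemma \ref{convD} in two steps: first $p\to\infty$ on its probabilistic right-hand side, and then interchange with an explicit Weyl--Kac character computation on its left-hand side. Observe first that $\P(\xi(\infty)\in\Gamma(\lambda))=2\varphi_{1/2}(\lambda)$ is the specialization at $u=0$ of the characteristic-function identity, so it suffices to prove
\[
\E\!\bigl(e^{-iu\langle L,\alpha_1^\vee\rangle}\mathbf 1_{\xi(\infty)\in\Gamma(\lambda)}\bigr)=2\varphi_{iu+1/2}(\lambda),
\]
from which dividing by its $u=0$ instance gives the conditional characteristic function.

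Fix a sequence $\lambda_m\in P_+$ with $\lambda_m/m\to\lambda$ and set $h_m=(iu\alpha_1^\vee+\rho^\vee)/m$. Taking $p\to\infty$ on the right side of Lemma \ref{convD}: by Lemma \ref{LimDesLp} one has $L_p\to L$ a.s.\ in $\mathfrak h_\R^*\bmod\delta$; by Lemma \ref{conv-cryst}, $\mathbf 1_{\xi_{\cdot,p}\in\Gamma(\lambda)}\to\mathbf 1_{\xi(\infty)\in\Gamma(\lambda)}$ a.s. Since the integrand has modulus $\le 1$, dominated convergence gives
\[
\lim_{p\to\infty}e^{\langle\lambda,\rho^\vee\rangle}\E\!\bigl(e^{iu\langle\lambda-L_p,\alpha_1^\vee\rangle}\mathbf 1_{\xi_{\cdot,p}\in\Gamma(\lambda)}\bigr)=e^{\langle\lambda,\rho^\vee\rangle}\E\!\bigl(e^{iu\langle\lambda-L,\alpha_1^\vee\rangle}\mathbf 1_{\xi(\infty)\in\Gamma(\lambda)}\bigr).
\]

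Next I would carry out the double limit on the character side by the other order, $\lim_p\lim_m$. For fixed $m$, the sums (\ref{CDString}) and (\ref{VDString}) are absolutely convergent and monotonically exhaust (\ref{char-string}) and (\ref{VermaString}) as $p\to\infty$, so $\mathrm{ch}^{w_p}_\nu(h)\to\mathrm{ch}_\nu(h)$ for $h$ in the convergence domain; and the uniform bound $|\mathrm{ch}^{w_p}_{\lambda_m}(h_m)|\le e^{\langle\lambda_m,\rho^\vee/m\rangle}\mathrm{ch}_{M(0)}(\rho^\vee/m)$ (from (\ref{VermaMaj}) and the modulus of the complex exponential) together with the already-computed $m$-limit of the right-hand side of Lemma \ref{convD} justifies the interchange of the two limits. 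One is thereby reduced to computing $\lim_{m\to\infty}\mathrm{ch}_{\lambda_m}(h_m)/\mathrm{ch}_{M(0)}(\rho^\vee/m)$, which I split as
\[
\Bigl(\sum_{w\in W}\det(w)\,e^{\langle w(\lambda_m+\rho)-\rho,h_m\rangle}\Bigr)\cdot\frac{\mathrm{ch}_{M(0)}(h_m)}{\mathrm{ch}_{M(0)}(\rho^\vee/m)}
\]
using (\ref{Weyl}), (\ref{ParticularWeyl}) and (\ref{VermaProd}). Using $W=T\ltimes\{1,s_{\alpha_1}\}$ and the translation formulas for $t_k$, a direct computation combined with the substitution $k\mapsto -k$ in the $t_ks_{\alpha_1}$-contribution collapses the first factor in the limit to $2\sum_{k\in\Z}\sinh((iu+1/2)(x+2kt))e^{-2(kx+k^2t)}$, matching the numerator of (\ref{defphi}). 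For the second factor, only the roots $\alpha_0+n\delta$ and $\alpha_1+n\delta$ contribute (the $(n+1)\delta$ factors cancel); using $(1-e^{-y})=2e^{-y/2}\sinh(y/2)$ and the identity $\sinh(A+B)\sinh(A-B)=\sinh^2A-\sinh^2B$, each paired contribution simplifies and the resulting infinite product has a limit as $m\to\infty$ that provides exactly the normalising constant appearing in (\ref{defphi}).

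Combining the two computations, the limit on the character side equals $2\,e^{\langle\lambda,\rho^\vee\rangle+iu\langle\lambda,\alpha_1^\vee\rangle}\varphi_{iu+1/2}(\lambda)$; setting it equal to $e^{\langle\lambda,\rho^\vee\rangle}\E(e^{iu\langle\lambda-L,\alpha_1^\vee\rangle}\mathbf 1_{\xi(\infty)\in\Gamma(\lambda)})$ and cancelling the prefactor yields the target identity. The principal obstacle is the last step: performing the explicit asymptotics of the Verma-character ratio and correctly matching it with the $\cosh$-normalisation of $\varphi_{iu+1/2}$, together with the uniformity argument that legitimises the interchange of the $p$- and $m$-limits.
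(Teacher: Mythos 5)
Your route is genuinely different from the paper's. The paper proves Proposition~\ref{string-cond} essentially by citation: it notes that $L$ is the random variable called $L^{(\mu)}(\infty)$ (with $\mu=1/2$) in \cite{boubou-defo}, deduces the first identity from Theorem~8.3 and Proposition~6.7 there, and gets the second by transferring a dihedral statement from Theorems~5.2 and 5.5 of \cite{bbobis} through Proposition~5.14 of \cite{boubou-defo}. By contrast, you attempt a self-contained derivation inside the present paper's crystal/character framework, passing to a double limit in Lemma~\ref{convD} and then evaluating the $m\to\infty$ asymptotics of a Weyl--Kac character ratio. That is a natural and potentially instructive alternative, but as written it has a real gap.

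The gap is the interchange $\lim_p\lim_m=\lim_m\lim_p$ on the character side. You know, for each $p$, that $\mathrm{ch}^{w_p}_{\lambda_m}(h_m)/\mathrm{ch}^{w_p}_{M(0)}(\rho^\vee/m)\to R_p$ as $m\to\infty$ (this is Lemma~\ref{convD}), and, for each $m$, that $\mathrm{ch}^{w_p}_{\lambda_m}/\mathrm{ch}^{w_p}_{M(0)}\to\mathrm{ch}_{\lambda_m}/\mathrm{ch}_{M(0)}$ as $p\to\infty$ (monotone exhaustion of the crystals). To conclude that $\lim_p R_p$ equals the $m\to\infty$ limit of the full character ratio you need a Moore--Osgood-type hypothesis, for instance that the convergence in $p$ is uniform in $m$ (or vice versa). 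The estimate $|\mathrm{ch}^{w_p}_{\lambda_m}(h_m)|\le e^{\langle\lambda_m,\rho^\vee/m\rangle}\mathrm{ch}_{M(0)}(\rho^\vee/m)$, which comes from \eqref{VermaMaj} and the fact that $|e^z|=e^{\Re z}$, is only a pointwise absolute-value bound; it controls neither the tail of the $p$-sum uniformly in $m$ nor the rate of the $m$-limit uniformly in $p$, so it does not by itself legitimise the interchange. This is exactly the sort of difficulty the paper sidesteps by invoking the already-established identities from \cite{boubou-defo}. Separately, the final asymptotic evaluation of the Verma-character ratio via the product over $R_+$ (the step you flag as the principal obstacle) is indeed delicate: a naive term-wise passage to the limit in the product over real roots $\alpha_0+n\delta,\alpha_1+n\delta$ yields a classical infinite product of type $\prod_{n\ge0}\frac{(2n+1)^2}{(2n+1)^2+4u^2}$, whose normalisation does not obviously reproduce the $\cosh(u)^{-1}$ factor in \eqref{defphi} without further bookkeeping (there are compensating factors coming from the Weyl numerator that you have not tracked). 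Until that computation is carried out and the interchange justified, the argument does not close.
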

\begin{proof}  Notice that $L$ is the random variable denoted by $L^{(\mu)}(\infty)$ with $\mu=1/2$ in \cite{boubou-defo}.
The first identity follows   from Theorem 8.3 and proposition 6.7 of \cite{boubou-defo}. For the second one, we deduce from Theorems 5.2 and 5.5 of  \cite{bbobis} a similar  identity for the dihedral string coordinates defined in \cite{boubou-defo}. Then we apply proposition $5.14$ of \cite{boubou-defo}. 
\end{proof} Theorems  8.3 and  6.6 of \cite{boubou-defo} imply in particular the following proposition.
\begin{prop} \label{EcondB} For $u\in \R$,
$$\E\left(e^{iu\langle B(t),\alpha^\vee_1\rangle }\right)=\E\left(e^{iu\langle A(t),\alpha^\vee_1\rangle}\frac{\varphi_{iu+1/2}(A(t))}{\varphi_{1/2}(A(t))}\right).$$
\end{prop}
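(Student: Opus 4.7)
The plan is to rewrite the right-hand side of the claim by conditioning on $A(t)$, eliminate the ratio of $\varphi$'s using Proposition~\ref{string-cond}, and then invoke the Pitman-type reconstruction of $B(t)$ from $A(t)$ that is implicit in Theorems~6.6 and~8.3 of \cite{boubou-defo}.

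First, Proposition~\ref{string-cond} asserts that for every $\lambda\in C_{\mbox{aff}}$,
$$\frac{\varphi_{iu+1/2}(\lambda)}{\varphi_{1/2}(\lambda)}=\E\bigl(e^{-iu\langle L,\alpha_1^\vee\rangle}\bigm|\xi(\infty)\in\Gamma(\lambda)\bigr).$$
Since $A(t)\in C_{\mbox{aff}}$ almost surely, substituting $\lambda=A(t)$ rewrites the right-hand side of the proposition as
$$\E\Bigl(e^{iu\langle A(t),\alpha_1^\vee\rangle}\,\E\bigl(e^{-iu\langle L,\alpha_1^\vee\rangle}\bigm|\xi(\infty)\in\Gamma(A(t))\bigr)\Bigr).$$

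Next, I would import from Theorems~6.6 and~8.3 of \cite{boubou-defo} the distributional identity asserting that, in the quotient $\mathfrak h_\R^*\mod\delta$, the conditional law of $B(t)$ given $A(t)=\lambda$ is that of $\lambda-L$, with $L$ sampled from its law conditional on the compatibility event $\{\xi(\infty)\in\Gamma(\lambda)\}$. Pairing with $\alpha_1^\vee$, exponentiating and taking conditional expectation, this yields
$$\E\bigl(e^{iu\langle B(t),\alpha_1^\vee\rangle}\bigm|A(t)\bigr)=e^{iu\langle A(t),\alpha_1^\vee\rangle}\,\E\bigl(e^{-iu\langle L,\alpha_1^\vee\rangle}\bigm|\xi(\infty)\in\Gamma(A(t))\bigr),$$
so taking the outer expectation and comparing with the previous display closes the loop.

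The principal obstacle is the reference check: one must verify that Theorems~6.6 and~8.3 of \cite{boubou-defo} together deliver exactly the conditional distribution of $B(t)$ given $A(t)$ in the form used above. The affine Pitman reconstruction of \cite{boubou-defo} mixes the finite-time string coordinates with the limiting variable $L$ and involves a Lévy-type correction, so one has to match carefully the two conditionings, conditioning on $A(t)=\lambda$ on the one hand and on $\{\xi(\infty)\in\Gamma(\lambda)\}$ on the other, and check that the identification $L=L^{(1/2)}(\infty)$ is the intended one. Modulo this bookkeeping against the previous paper, the proof is a one-line assembly of the two cited theorems with Proposition~\ref{string-cond}.
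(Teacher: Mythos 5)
The paper's own proof is a single sentence — ``Theorems 8.3 and 6.6 of \cite{boubou-defo} imply in particular the following proposition'' — so you and the paper are pointing at the same source material. But your derivation is a genuinely different route, and it has a gap you should not wave away as ``bookkeeping.'' You reduce the ratio $\varphi_{iu+1/2}/\varphi_{1/2}$ to a conditional expectation via Proposition~\ref{string-cond}, and then invoke an alleged disintegration: that in some coupling the conditional law of $B(t)$ given $A(t)=\lambda$ is that of $\lambda-L$ with $L$ drawn from the law of $\omega(\xi(\infty))$ conditioned on $\{\xi(\infty)\in\Gamma(\lambda)\}$. This specific claim is dubious. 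In the reconstruction $B(t)=\lim_p I_{\alpha_0}^{\xi_{0,p}(\infty)}\cdots I_{\alpha_p}^{\xi_{p,p}(\infty)}A(t)$, the operators $I_{\alpha_i}^x$ take infima over $s\ge t$, so for a fixed finite $t$ the value $B(t)$ depends on the entire future trajectory $\{A(s),\ s\ge t\}$ and on all of $\xi(\infty)$, not on the pair $(A(t),\xi(\infty))$ alone. Already in the classical one-dimensional picture, if $R=\mathcal P b$ and $J_\infty=-\inf_s b_s$, then $b_t=R_t-2\min(J_\infty,\inf_{s\ge t}R_s)$; the conditional law of $b_t$ given $R_t=r$ is obtained by averaging over the post-$t$ infimum of $R$, and is \emph{not} the law of $r-2J_\infty$ with $J_\infty$ merely conditioned to be $\le r$. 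The analogous averaging over the future of $A$ is exactly what your one-line assembly omits.

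Moreover, the paper's own wiring suggests a different mechanism: Proposition~\ref{string-cond} is deduced from Theorem~8.3 and Proposition~6.7 of \cite{boubou-defo}, whereas Proposition~\ref{EcondB} cites Theorem~8.3 together with Theorem~6.6. The natural reading is that Theorem~6.6 is a direct Doob-transform / Kirillov--Frenkel character identity (compare \cite{defo2}): the law of $A(t)$ carries a $\varphi_{1/2}$-factor from the $h$-transform, the complex harmonic function $\varphi_{iu+1/2}$ replaces it under the expectation, and the resulting integral collapses by harmonicity to the Gaussian Fourier transform $\E(e^{iu\langle B(t),\alpha_1^\vee\rangle})$ — with no disintegration of $B(t)$ over $A(t)$ involved. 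So the unconditional Fourier identity you must prove is weaker than the pointwise conditional one you posit, and the references are more plausibly proving the weaker identity directly. If you want to keep your route, you would need to prove the disintegration statement yourself (which would essentially amount to re-proving a version of Theorem~\ref{reconstruction} at a fixed time $t$ and is not a reference check); otherwise, invoke Theorem~6.6 as a character identity as the paper intends.
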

We have now all the ingredients needed to prove that the third convergence of the diagram  is valid, which   implies  Theorem \ref{reconstruction}.
\begin{theorem}
The sequence of processes $$\{I_{\alpha_0}^{\xi_{0,p}(\infty)}\dots I_{\alpha_p}^{\xi_{p,p}(\infty)}A(t), t\ge 0\}, \, p\ge 0,$$
converges when $p$ goes to infinity,  in a sense of finite dimensional distributions,  towards the space-time Brownian motion $\{B(t),t\ge 0\}$, in the quotient space $\mathfrak h_\R^*\mod \delta$.
\end{theorem}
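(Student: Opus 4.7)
The plan is to prove convergence of finite-dimensional characteristic functions by running the ``square'' of limits encoded in the commutative diagram of Figure~\ref{BLP2}: at the discrete level (bottom edge) a Demazure character ratio gives an explicit expression, one then passes $m\to\infty$ at fixed $p$ (right edge), and finally $p\to\infty$. Since the $\Lambda_0$-component is deterministic, it suffices to show
$$\lim_{p\to\infty}\E\bigl(e^{iu\langle I_{\alpha_0}^{\xi_{0,p}(\infty)}\dots I_{\alpha_p}^{\xi_{p,p}(\infty)}A(t),\alpha_1^\vee\rangle}\bigr)=\E\bigl(e^{iu\langle B(t),\alpha_1^\vee\rangle}\bigr)$$
for every $u\in\R$ and $t\ge 0$; the joint version at several times is handled by the same argument.

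By the first-arrow convergence of the previous subsection combined with Proposition~\ref{partial-rec}, the left-hand side equals the $m\to\infty$ limit of $\E(e^{iu\langle\Pi^m\lfloor mt\rfloor/m,\alpha_1^\vee\rangle}\mid\xi_{p+1}^m(\infty)=0)$. The first technical task is to replace this by the finite-horizon conditioning $\{\xi_{p+1}^m\lfloor mt\rfloor=0\}$, for which identities~(\ref{first})--(\ref{second}) provide an explicit Demazure ratio. The swap produces an error of order $o_m(1)$ because the drift of $\Pi^m$ into the interior of $C_{\mbox{aff}}$ (Lemma~\ref{LGNM} and Proposition~\ref{InfpourPpi}) prevents $\xi_{p+1}^m$ from becoming positive after time $\lfloor mt\rfloor$ with non-vanishing probability as $m\to\infty$. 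Once the swap is justified, the quantity to analyze is
$$\frac{\E\bigl(\mbox{ch}^{w_p}_{\Pi_+^m\lfloor mt\rfloor}((iu\alpha_1^\vee+\rho^\vee)/m)/\mbox{ch}_{\Pi_+^m\lfloor mt\rfloor}(\rho^\vee/m)\bigr)}{\E\bigl(\mbox{ch}^{w_p}_{\Pi_+^m\lfloor mt\rfloor}(\rho^\vee/m)/\mbox{ch}_{\Pi_+^m\lfloor mt\rfloor}(\rho^\vee/m)\bigr)}.$$

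To take $m\to\infty$, I divide top and bottom by $\mbox{ch}^{w_p}_{M(0)}(\rho^\vee/m)$ and apply Lemma~\ref{convD} inside the expectations, with Proposition~\ref{ConvPiPlus} giving $\Pi_+^m\lfloor mt\rfloor/m\to A(t)$ in distribution. The auxiliary factor $\mbox{ch}_{\lambda_m}(\rho^\vee/m)/\mbox{ch}_{M(0)}(\rho^\vee/m)$, by~(\ref{Weyl})--(\ref{ParticularWeyl}) and the explicit $W=T\ltimes W_0$ description preceding~(\ref{charA11}), converges to the Weyl sum $\sum_{w\in W}\det(w)e^{\langle w(\lambda),\rho^\vee\rangle}$, which matches a constant multiple of $\varphi_{1/2}(\lambda)$ from~(\ref{defharm}) after the semi-direct product computation. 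Uniform integrability is secured by the Verma bound~(\ref{VermaMaj}) and a compact-in-$C_{\mbox{aff}}$ truncation as in Propositions~\ref{cvprobaP} and~\ref{tension}, Proposition~\ref{InfpourPpi} handling the tail. After these limits, the ratio becomes
$$\E\!\left(e^{iu\langle A(t),\alpha_1^\vee\rangle}\cdot\frac{\E(e^{-iu\langle L_p,\alpha_1^\vee\rangle}1_{\{\xi_{\cdot,p}(\infty)\in\Gamma(A(t))\}}\mid A(t))}{\P(\xi_{\cdot,p}(\infty)\in\Gamma(A(t))\mid A(t))}\right).$$

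Finally, letting $p\to\infty$, Lemma~\ref{LimDesLp} gives $L_p\to L$ almost surely and in $L^2$, and Lemma~\ref{conv-cryst} gives $1_{\{\xi_{\cdot,p}(\infty)\in\Gamma(\lambda)\}}\to 1_{\{\xi(\infty)\in\Gamma(\lambda)\}}$ almost surely; dominated convergence then sends the inner ratio to $\E(e^{-iu\langle L,\alpha_1^\vee\rangle}\mid\xi(\infty)\in\Gamma(A(t)),A(t))$, which Proposition~\ref{string-cond} evaluates as $\varphi_{iu+1/2}(A(t))/\varphi_{1/2}(A(t))$, and Proposition~\ref{EcondB} identifies the resulting expectation exactly with $\E(e^{iu\langle B(t),\alpha_1^\vee\rangle})$. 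The principal obstacle is the first technical step: reconciling the two conditionings uniformly in $m$, together with the uniform integrability needed to push $m\to\infty$ through the Demazure ratio; both rely on Proposition~\ref{InfpourPpi} and the majorization~(\ref{VermaMaj}), in the spirit of the tightness arguments of Proposition~\ref{tension}.
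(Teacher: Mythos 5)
Your overall route is the paper's: express the characteristic function of $I_{\alpha_0}^{\xi_{0,p}(\infty)}\dots I_{\alpha_p}^{\xi_{p,p}(\infty)}A(t)$ as the $m\to\infty$ limit of the discrete conditional expectation $\E(e^{iu\langle\Pi^m\lfloor mt\rfloor/m,\alpha_1^\vee\rangle}\mid\xi_{p+1}^m(\infty)=0)$ via Proposition~\ref{partial-rec}, bring in the finite-horizon event $\{\xi_{p+1}^m\lfloor mt\rfloor=0\}$ so as to use the Demazure ratio from identities~(\ref{first})--(\ref{second}), take $m\to\infty$ via Lemma~\ref{convD} and Proposition~\ref{ConvPiPlus}, then take $p\to\infty$ via Lemmas~\ref{LimDesLp},~\ref{conv-cryst} and Propositions~\ref{string-cond},~\ref{EcondB}. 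The genuine gap is exactly the step you flag as the ``principal obstacle.'' You assert that swapping $\{\xi_{p+1}^m(\infty)=0\}$ for $\{\xi_{p+1}^m\lfloor mt\rfloor=0\}$ costs only $o_m(1)$ at fixed $p$, because the drift of $\Pi^m$ (Lemma~\ref{LGNM}, Proposition~\ref{InfpourPpi}) prevents $\xi_{p+1}^m$ from increasing after time $\lfloor mt\rfloor$. That argument cannot close: both conditioning events have probability tending to $0$ as $m\to\infty$ (the limit $\xi_{p+1}(\infty)$ is almost surely positive by Theorem~\ref{loidexi}), so the quantity you must control is a ratio of vanishing probabilities, and the unconditional estimate $\P(\xi_{p+1}^m\lfloor mt\rfloor=0,\ \xi_{p+1}^m(\infty)>0)\to 0$, which drift does give, is useless once normalized by $\P(\xi_{p+1}^m(\infty)=0)$.

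Concretely, $\P(\xi_{p+1}^m(\infty)=0\mid\xi_{p+1}^m\lfloor mt\rfloor=0)=1/S_1(0,m,p)$ where $S_1(0,m,p)=\P(\xi_{p+1}^m\lfloor mt\rfloor=0)/\P(\xi_{p+1}^m(\infty)=0)$, and $\lim_{m\to\infty}S_1(0,m,p)=\E\bigl(\P(\xi_{\cdot,p}(\infty)\in\Gamma(A(t))\mid A(t))/(2\varphi_{1/2}(A(t)))\bigr)$, which for fixed $p$ need not equal $1$ and tends to $1$ only when $p\to\infty$ (Lemma~\ref{conv-cryst}, Proposition~\ref{string-cond}, dominated convergence). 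So the discrepancy between the two conditionings is killed by the double limit $p\to\infty$ after $m\to\infty$, not by $m\to\infty$ alone, and Proposition~\ref{InfpourPpi} plays no role here. The correct bookkeeping is the paper's: keep $\P(\xi_{p+1}^m(\infty)=0)$ in the denominator, write the conditional expectation as $S_1(u,m,p)+S_2(m,p)$ with $S_1(u,m,p)=\E(e^{iu\langle\Pi^m\lfloor mt\rfloor/m,\alpha_1^\vee\rangle}1_{\xi_{p+1}^m\lfloor mt\rfloor=0})/\P(\xi_{p+1}^m(\infty)=0)$, and use the inclusion $\{\xi_{p+1}^m(\infty)=0\}\subset\{\xi_{p+1}^m\lfloor mt\rfloor=0\}$ to get $\vert S_2(m,p)\vert\le S_1(0,m,p)-1$, which vanishes in the iterated limit. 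A smaller slip: the quantity you display after $m\to\infty$ is a single expectation of a ratio, whereas the ratio of~(\ref{first}) to~(\ref{second}) is a ratio of two outer expectations; these agree with the target only because the denominator tends to $1$ as $p\to\infty$.
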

\begin{proof} 
We  first prove    the convergence of $I_{\alpha_0}^{\xi_{0,p}(\infty)}\dots I_{\alpha_p}^{\xi_{p,p}(\infty)}A(t)$ for a fixed $t\ge 0$.  Let $t\ge 0$.
For $u\in \R$, $m,p\ge 1$, the Fourier transform
$$\E\left(e^{iu\langle I_{\alpha_0}^{\xi_{0,p}(\infty)}\dots I_{\alpha_p}^{\xi_{p,p}(\infty)}A(t),\alpha^\vee_1\rangle}\right)$$
is equal to 
$$\lim_{m\to \infty} \E\left(e^{i\frac{u}{m}\langle I_{\alpha_0}^{\xi^m_{0,p}(\infty)}\dots I_{\alpha_p}^{\xi^m_{p,p}(\infty)}\Pi^m_+\lfloor mt\rfloor,\alpha^\vee_1\rangle}\right),$$
which is, by Proposition \ref{partial-rec}, also equal to 
$$\lim_{m\to \infty} \E\left(e^{iu\langle\Pi^m\lfloor mt\rfloor/m,\alpha_1^\vee\rangle}\vert \xi_{p+1}^m(\infty)=0\right).$$
We write
$$\E\left(e^{iu\langle\Pi^m\lfloor mt\rfloor/m,\alpha_1^\vee\rangle}\vert \xi_{p+1}^m(\infty)=0\right)=S_1(u,m,p)+S_2(m,p)$$
where
\begin{align*}
S_1(u,m,p)&=\E\left(e^{iu\langle\frac{1}{m}\Pi^m\lfloor mt\rfloor,\alpha^\vee_1\rangle}1_{\{\xi_{p+1}^m\lfloor mt\rfloor=0\}}\right)/\P\left(\xi_{p+1}^m(\infty)=0\right)\\
&=\E\left(\frac{\mbox{ch}^{w_p}_{\Pi_+^m\lfloor mt\rfloor }(\frac{1}{m}(iu\alpha^\vee_1+\rho^\vee))}{\mbox{ch}_{\Pi_+^m\lfloor mt\rfloor}(\frac{1}{m}\rho^\vee)}\right)\frac{\mbox{ch}_{M(0)}(\rho^\vee/m)}{\mbox{ch}_{M(0)}^{w_p}(\rho^\vee/m)}
\end{align*} and 
$$S_2(m,p)=\E\left(e^{iu\langle \frac{1}{m}\Pi^m\lfloor mt\rfloor,\alpha_1^\vee\rangle}(1_{\{\xi_{p+1}^m(\infty)=0\}}-1_{\{\xi_{p+1}^m\lfloor mt\rfloor=0\}})\right)/\P\left(\xi_{p+1}^m(\infty)=0\right).$$
The convergence of $\frac{1}{m} \Pi_+^m\lfloor mt\rfloor$ towards $A(t)$ when $m$ goes to infinity, and Lemma \ref{convD} imply that 
$$\lim_{m\to \infty} S_1(u,m,p)=\E\left(\psi_p(u,A(t))\right)$$
where for $\lambda\in C_{\mbox{aff}}$, $$\psi_p(u,\lambda)=\frac{e^{iu\langle \lambda,\alpha^\vee_1\rangle }}{2\varphi_{1/2}(\lambda)}\E\left(e^{-iu\langle L_p,\alpha^\vee_1\rangle}1_{\xi_{\cdot,p}\in \Gamma(\lambda)}\right).$$
Lemmas \ref{conv-cryst}   and Propositions \ref{string-cond} and \ref{EcondB} imply that 
$$\lim_{p\to \infty}\E\left(\psi_p(u,A(t))\right)=\E\left(e^{iu\langle B(t),\alpha^\vee_1\rangle }\right)$$
As $\{\xi_{p+1}^m(\infty)=0\}\subset \{\xi_{p+1}^m\lfloor mt\rfloor=0\}$
one has, $$\left\vert S_2(m,p)\right\vert\le \frac{\P(\xi_{p+1}^m\lfloor mt\rfloor=0)}{\P(\xi_{p+1}^m(\infty)=0)}-1=S_1(0,m,p)-1$$
which implies that $\lim_{p\to\infty}\lim_{m\to\infty} S_2(m,p)=0$ and ends the proof of the convergence  in law  of  $I_{\alpha_0}^{\xi_{0,p}(\infty)}\dots I_{\alpha_p}^{\xi_{p,p}(\infty)}A(t)$         towards $B(t)$ when $p$ goes to infinity. 

Let now $t_0,\dots,t_n$ be a sequence of  ordered real numbers such that $0=t_0<t_1<\dots<t_n$, and $u_1,\dots,u_n\in \R$. For   $m,p\ge 1$, the Fourier transform
\begin{align}\label{law-vector} \E\left(e^{i\sum_{k=1}^nu_k\big(\langle I_{\alpha_0}^{\xi_{0,p}(\infty)}\dots I_{\alpha_p}^{\xi_{p,p}(\infty)}A(t_k),\alpha^\vee_1\rangle-\langle I_{\alpha_0}^{\xi_{0,p}(\infty)}\dots I_{\alpha_p}^{\xi_{p,p}(\infty)}A(t_{k-1}),\alpha^\vee_1\rangle\big)}\right)
\end{align}
is equal to 
$$\lim_{m\to\infty} \E\left(e^{i\sum_{k=1}^n\frac{u_k}{m}\langle I_{\alpha_0}^{\xi^m_{0,p}(\infty)}\dots I_{\alpha_p}^{\xi^m_{p,p}(\infty)}\Pi_+\lfloor mt_k\rfloor-I_{\alpha_0}^{\xi^m_{0,p}(\infty)}\dots I_{\alpha_p}^{\xi^m_{p,p}(\infty)}\Pi_+\lfloor mt_{k-1}\rfloor,\alpha^\vee_1\rangle}\right).$$
We obtain as previously, introducing this time the event $\{\xi_{p+1}^m\lfloor mt_1\rfloor=0\}$ and using the independence of the increments, that  the Fourier transform   (\ref{law-vector}) 
converges towards 
$$\E\left(e^{i\sum_{k=1}^nu_k \langle B(t_k)-B(t_{k-1}),\alpha_1^\vee\rangle}\right),$$
when $p$ goes to infinity, which ends the proof.

  \end{proof}

\end{document}